\documentclass[preprint,11pt]{elsarticle}

\usepackage{amsfonts, amsmath, amscd}
\usepackage[psamsfonts]{amssymb}

\usepackage{amssymb}

\usepackage{pb-diagram}

\usepackage[all,cmtip]{xy}

\usepackage[usenames]{color}

\headheight=0in
\headsep = 0.51in
\topmargin=0in
\textheight=8.950in
\textwidth=6.5in
\oddsidemargin=-0.19in
\evensidemargin=-0.19in
\parindent=0.2in


\newtheorem{theorem}{Theorem}[section]
\newtheorem{lemma}[theorem]{Lemma}
\newtheorem{corollary}[theorem]{Corollary}

\newtheorem{remark}[theorem]{Remark}
\newtheorem{proposition}[theorem]{Proposition}
\newtheorem{definition}[theorem]{Definition}

\newtheorem{fact}[theorem]{Fact}

\newtheorem{problem}[theorem]{Problem}

\newproof{proof}{Proof}

\numberwithin{equation}{section}
\numberwithin{theorem}{section}



\newcommand{\e}{\varepsilon}
\newcommand{\w}{\omega}


\newcommand{\NN}{\mathbb{N}}
\newcommand{\ZZ}{\mathbb{Z}}

\newcommand{\IR}{\mathbb{R}}

\newcommand{\Ss}{\mathbb{S}}

\newcommand{\TT}{\mathbb{T}}



\newcommand{\TTT}{\mathcal{T}}
\newcommand{\FF}{\mathcal{F}}

\newcommand{\V}{\mathcal{V}}
\newcommand{\U}{\mathcal{U}}

\newcommand{\BB}{\mathcal{B}}

\newcommand{\KK}{\mathcal{K}}
\newcommand{\Nn}{\mathcal{N}}
\newcommand{\AAA}{\mathcal A}

\newcommand{\supp}{\mathrm{supp}}

\newcommand{\cl}{\mathrm{cl}}



\newcommand{\Nuc}{\mathsf{Nuc}}

\newcommand{\Mac}{\mathsf{Mac}}

\newcommand{\qc}{\mathsf{qc}}
\newcommand{\Id}{\mathsf{id}}

\newcommand{\LQC}{\mathsf{LQC}}
\newcommand{\MAPA}{\mathsf{MAPA}}

\newcommand{\CC}{C_k}

\newcommand{\SM}{{\setminus}}




\newcommand{\Gsa}{\widehat{G}_{\sigma^\ast}}



\input xy
\xyoption{all}


\begin{document}

\begin{frontmatter}

\title{Compatible group topologies on a locally quasi-convex abelian group \\ and the Mackey group problem}

\author{S.~Gabriyelyan}
\ead{saak@math.bgu.ac.il}
\address{Department of Mathematics, Ben-Gurion University of the Negev, Beer-Sheva, P.O. 653, Israel}

\begin{abstract}
For a locally quasi-convex (lqc) abelian group $G$, we give the first description of all compatible group topologies on $G$ and  apply this result to the Mackey group problem for lqc groups. We characterize lqc abelian groups which are Mackey groups or admit a Mackey group topology and provide a characterization of two Mackey groups whose product is Mackey. We obtain the first characterization of locally convex spaces which are Mackey groups.

\end{abstract}

\begin{keyword}
locally quasi-convex \sep compatible group topology \sep Mackey group \sep Mackey property

\MSC[2010] 22A10  \sep 43A25 \sep 46A03

\end{keyword}

\end{frontmatter}



\section{Introduction}


Let $(E,\tau)$ be a locally convex space. Denote by $E'$ the topological dual space of $E$, and let $\sigma(E',E)$ be the weak${}^\ast$ topology on $E'$. A locally convex vector topology $\nu$ on $E$ is called {\em compatible with $\tau$} if the spaces $(E,\tau)$ and $(E,\nu)$ have the same topological dual space, i.e., $(E,\nu)'=(E,\tau)'$. A family $\mathfrak{S}$ of $\sigma(E',E)$-bounded subsets of $E$ is called {\em saturated} if it is closed under taking subsets, scalar multiples and absolutely convex $\sigma(E',E)$-closed hulls of the union of any two of its members. The following classical result describes the family of all locally convex vector topologies on $E$ compatible with $\tau$.
\begin{theorem}[Mackey--Arens] \label{t:Mackey-Arens}
Let $(E,\tau)$ be a locally convex space. Then:
\begin{enumerate}
\item[{\rm (i)}] A locally convex vector topology $\nu$ on $E$  is compatible with $\tau$ if and only if $\nu$ is the topology of uniform convergence on some saturated family $\mathfrak{S}$ that covers $E$ and consists of $\sigma(E',E)$-compact absolutely convex sets in $E'$ and their subsets.
\item[{\rm (ii)}] There is a finest locally convex vector space topology $\mu$ on $E$ compatible with $\tau$. The topology $\mu$, called the {\em Mackey topology} on  $E$ associated with $\tau$, is the locally convex topology on $E$ of uniform convergence on all $\sigma(E',E)$-compact absolutely convex sets in $E'$.
\end{enumerate}
\end{theorem}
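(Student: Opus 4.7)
The plan is to prove the theorem via polar duality, using two fundamental tools: the bipolar theorem (an absolutely convex $\sigma(E',E)$-closed subset of $E'$ equals its own bipolar) and the Banach--Alaoglu theorem (polars of $0$-neighborhoods are $\sigma(E',E)$-compact). Throughout, for $A\subseteq E'$ I write $A^\circ:=\{x\in E:|f(x)|\leq 1\text{ for all }f\in A\}$, with the symmetric definition for subsets of $E$.

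For the sufficiency direction of (i), I would take a saturated family $\mathfrak{S}$ of $\sigma(E',E)$-compact absolutely convex subsets of $E'$ whose union covers $E'$, declare $\nu$ to be the topology of uniform convergence on $\mathfrak{S}$ (with $0$-neighborhood base $\{A^\circ:A\in\mathfrak{S}\}$), and verify $(E,\nu)'=E'$. The inclusion $E'\subseteq(E,\nu)'$ follows from the covering property: every $f\in E'$ lies in some $A\in\mathfrak{S}$, so $A^\circ\subseteq\{x:|f(x)|\leq 1\}$ is a $\nu$-neighborhood of $0$ witnessing the $\nu$-continuity of $f$. The reverse inclusion uses the bipolar theorem: any $\nu$-continuous functional is bounded by $1$ on some $A^\circ$, hence lies in $A^{\circ\circ}=A\subseteq E'$, where the equality is valid because $A$ is absolutely convex and $\sigma(E',E)$-closed, being $\sigma(E',E)$-compact in a Hausdorff topology.

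For the necessity direction, I would take any compatible $\nu$ together with a basis $\mathcal{U}$ of absolutely convex $\nu$-closed $0$-neighborhoods. Because $(E,\nu)'=E'$, the Banach--Alaoglu theorem forces each polar $U^\circ\subseteq E'$ to be absolutely convex and $\sigma(E',E)$-compact, and the bipolar theorem gives $U=U^{\circ\circ}$. Consequently $\nu$ coincides with the topology of uniform convergence on the saturation of $\{U^\circ:U\in\mathcal{U}\}$, which is of the required form. Part (ii) is then immediate: let $\mathfrak{S}$ be the collection of \emph{all} $\sigma(E',E)$-compact absolutely convex subsets of $E'$; this family is saturated, so by (i) the associated topology $\mu$ is compatible, and every compatible topology is generated by a subfamily of $\mathfrak{S}$ and is therefore coarser than $\mu$.

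The main obstacle I expect is verifying that the family $\mathfrak{S}$ used in (ii) is actually saturated; concretely, that the absolutely convex $\sigma(E',E)$-closed hull of $A_1\cup A_2$ is $\sigma(E',E)$-compact whenever $A_1,A_2$ are. I would handle this by exhibiting the hull as the continuous image of a compact parameter space (the closed complex disk $\overline{\mathbb{D}}^2$ times $A_1\times A_2$ under the absolutely convex combination map $(\lambda_1,\lambda_2,a_1,a_2)\mapsto \lambda_1 a_1+\lambda_2 a_2$ cut off at $|\lambda_1|+|\lambda_2|\leq 1$) and invoking the fact that compact sets in a Hausdorff space are closed, so the closure step does not enlarge the image. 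A secondary subtlety is ensuring in the necessity step that the polars $U^\circ$ genuinely cover $E'$ once saturation is applied; this follows because each $f\in E'$ is bounded on some $U$, hence $f\in nU^\circ$ for large $n$, and the scalar multiple is then absorbed by saturation.
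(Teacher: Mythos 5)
The paper does not prove this theorem: it is quoted in the Introduction as the classical Mackey--Arens theorem and used as motivation, so there is no in-paper argument to compare against. Your proposal is the standard textbook proof via polar duality (Banach--Alaoglu plus the bipolar theorem) and is essentially correct, including the verification that the absolutely convex hull of the union of two weak${}^\ast$-compact absolutely convex sets is again compact, and the observation that saturation absorbs the scalar factor needed to make the polars cover $E'$. One point deserves more care than you give it: in the sufficiency step you conclude that a $\nu$-continuous functional $f$ bounded by $1$ on $A^{\circ}$ lies in $A^{\circ\circ}=A$, justifying the equality by $A$ being absolutely convex and $\sigma(E',E)$-closed. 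As written this is circular, since placing $f$ in a bipolar taken inside $E'$ presupposes $f\in E'$. The bipolar of $A^{\circ}$ must be computed in the algebraic dual $E^{\ast}$ (or in $(E,\nu)'$), and the equality $A^{\circ\circ}=A$ there requires $A$ to be $\sigma(E^{\ast},E)$-closed in that larger space; this is exactly where $\sigma(E',E)$-compactness, rather than mere closedness in $E'$, is indispensable (and is why the theorem fails for closed bounded non-compact sets). You do invoke compactness, so the right ingredient is present, but the argument should state explicitly that compactness of $A$ in $\sigma(E',E)$ yields compactness, hence closedness, in $\sigma(E^{\ast},E)$, after which the bipolar theorem in the pair $\langle E,E^{\ast}\rangle$ gives $f\in A\subseteq E'$.
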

If $\mu=\tau$, the space $E$ is called a {\em Mackey space}. The most important class of Mackey spaces is the class of quasibarrelled spaces. This class is sufficiently rich and contains all metrizable locally convex spaces. In particular, every normed space is a Mackey space.

For an abelian topological group $(G,\tau)$, we denote by $\widehat{G}$ the group of all continuous characters of $(G,\tau)$.
Two topologies  $\mu$ and $\nu$ on  $G$  are said to be {\em compatible } if $\widehat{(G,\mu)}=\widehat{(G,\nu)}$.
We denote by $\mathcal{C}(G,\tau)$ the family of all locally quasi-convex group topologies on $G$  compatible with the original topology $\tau$  (for the definition of locally quasi-convex abelian groups see Section \ref{sec:-polar}). Every locally convex space considered as an abelian topological group is a locally quasi-convex group (all locally convex spaces in the article are assumed to be real). The clause (i) of the Mackey--Arens theorem motivates the following natural problem posed by Mart\'{\i}n Peinador, and which is explicitly stated in the PhD thesis of  de Leo \cite[Problem~8.93]{deLeo}.

\begin{problem} \label{prob:description}
Let $G$ be a locally quasi-convex abelian  group. Describe the poset $\mathcal{C}(G)$. 
\end{problem}
We solve Problem \ref{prob:description} in Theorem \ref{t:compatible-description} which is the main result of the article.

Let  $(G,\tau)$ be a locally quasi-convex (lqc for short) abelian group.
Being motivated by the concept of Mackey spaces the following notion was  introduced and studied in \cite{CMPT}:
The group $(G,\tau)$ is called a {\em Mackey group} if $\nu\leq\tau$ for every $\nu\in\mathcal{C}(G,\tau)$. 
The clause (ii) of the Mackey--Arens theorem suggests the problem of whether every lqc abelian group has a {\em finest} locally quasi-convex group topology. Surprisingly, the answer to this question is negative. Answering a problem posed in \cite{Gab-Mackey}, Au\ss enhofer \cite{Aus3} and the author \cite{Gabr-A(s)-Mackey} independently have proved that the free abelian topological group over a convergent sequence does not admit a Mackey group topology.
In \cite{Gab-Mackey-free}, we proved a much more general assertion which states that the free abelian topological group $A(X)$ over a zero-dimensional metrizable space $X$ has a Mackey group topology if and only if $X$  is discrete.
For numerous results and open problems related to the Mackey problem for lqc abelian groups we refer the reader to \cite{NietoMP} and the very recent survey \cite{AD-mackey}.
Below we recall a characterization of the existence of a Mackey group topology proved in Proposition 3.11 of \cite{CMPT}.
\begin{theorem} \label{t:Char-Mackey-MP}
For an lqc abelian group $(G,\tau)$ the following assertions are equivalent:
\begin{enumerate}
\item[{\rm (i)}] $G$ has a Mackey group topology;
\item[{\rm (ii)}] $\tau_1\vee\tau_2$ is compatible with $\tau$ for every locally quasi-convex group topologies $\tau_1$ and $\tau_2$ on $G$ compatible with $\tau$.
\end{enumerate}
\end{theorem}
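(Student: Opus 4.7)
The plan is to handle the two implications by standard arguments on lattices of group topologies, with the nontrivial content concentrated in (ii)$\Rightarrow$(i). For the easy direction (i)$\Rightarrow$(ii) I let $\mu$ denote the Mackey group topology of $(G,\tau)$, so by definition $\mu$ is the maximum element of $\mathcal{C}(G,\tau)$. Given any $\tau_1,\tau_2\in\mathcal{C}(G,\tau)$ I have $\tau_i\leq\mu$, hence $\tau_1\vee\tau_2\leq\mu$; combined with $\tau_1\leq\tau_1\vee\tau_2$ and the compatibility of $\tau_1$ and $\mu$ with $\tau$, one reads off the chain
\[
\widehat{(G,\tau)}=\widehat{(G,\tau_1)}\subseteq\widehat{(G,\tau_1\vee\tau_2)}\subseteq\widehat{(G,\mu)}=\widehat{(G,\tau)},
\]
which forces $\tau_1\vee\tau_2$ to be compatible with $\tau$.

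For (ii)$\Rightarrow$(i), the plan is to set $\mu:=\sup\mathcal{C}(G,\tau)$ and show that $\mu$ is itself the Mackey topology. I would first invoke the standard fact that the supremum of any family of locally quasi-convex group topologies is again locally quasi-convex, so $\mu$ is lqc; together with (ii) this means that $\mathcal{C}(G,\tau)$ is closed under finite joins, i.e.\ is upward directed. It then suffices to verify that $\mu$ is compatible with $\tau$: once that is established, $\mu\in\mathcal{C}(G,\tau)$ is automatically its maximum, hence the Mackey topology.

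The inclusion $\widehat{(G,\tau)}\subseteq\widehat{(G,\mu)}$ is trivial from $\tau\leq\mu$. For the reverse inclusion I would combine two facts. First, because $\mathcal{C}(G,\tau)$ is directed with supremum $\mu$, the union $\bigcup_{\nu\in\mathcal{C}(G,\tau)}\mathcal{U}_\nu$ (where $\mathcal{U}_\nu$ is a neighborhood base of $0$ for $\nu$) is a $\mu$-neighborhood base of $0$, since directedness absorbs any finite intersection of $\nu_i$-neighborhoods into a $\nu_j$-neighborhood for a single $\nu_j\geq\nu_i$. Second, the standard polar-type characterization of character continuity: $\chi:G\to\TT$ is continuous iff $\chi^{-1}(W)$ contains a neighborhood of $0$, where $W$ is one fixed basic neighborhood of $0$ in $\TT$ (say $W=\{e^{2\pi it}:|t|\leq 1/4\}$). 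Combining these, any $\mu$-continuous $\chi$ satisfies $\chi^{-1}(W)\supseteq U$ for some $U\in\mathcal{U}_\nu$ and some $\nu\in\mathcal{C}(G,\tau)$, forcing $\chi$ to be $\nu$-continuous and hence $\chi\in\widehat{(G,\nu)}=\widehat{(G,\tau)}$.

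The main obstacle is precisely this final collapse from $\mu$-continuity to $\nu$-continuity for a single $\nu\in\mathcal{C}(G,\tau)$. Without the polar characterization, one would a priori obtain different witnesses $\nu_\varepsilon\in\mathcal{C}(G,\tau)$ for different levels $\varepsilon>0$, which is not enough to place $\chi$ into any single $\widehat{(G,\nu)}$; reducing character continuity to one fixed preimage condition on $W$ is what funnels the $\varepsilon$-witnesses into a single topology and closes the argument.
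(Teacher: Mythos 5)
Your proof is correct. Note, however, that the paper does not prove Theorem \ref{t:Char-Mackey-MP} at all: it is recalled verbatim from Proposition 3.11 of \cite{CMPT}, so there is no in-paper argument to match yours against. Your route is the natural elementary one: (i)$\Rightarrow$(ii) by sandwiching $\widehat{(G,\tau_1\vee\tau_2)}$ between $\widehat{(G,\tau_1)}$ and $\widehat{(G,\mu)}$, and (ii)$\Rightarrow$(i) by observing that (ii) makes $\mathcal{C}(G,\tau)$ upward directed, so that $\mu:=\sup\mathcal{C}(G,\tau)$ (which is lqc, since finite intersections of quasi-convex sets are quasi-convex) has $\bigcup_{\nu}\mathcal{U}_\nu$ as a neighborhood base, whence every $\mu$-continuous character is already $\nu$-continuous for a single $\nu\in\mathcal{C}(G,\tau)$ by the polar criterion ($\chi$ is continuous iff $\chi^{-1}(\Ss_+)$ is a neighborhood of zero, which is Proposition \ref{p:basis-weak}(ii) in the paper). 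That last reduction is exactly the right pivot, and your remark that it is what funnels the a priori $\varepsilon$-dependent witnesses into one topology is well taken. For comparison, the machinery the paper actually develops goes in a different direction: Theorem \ref{t:Mackey-exist} sharpens Theorem \ref{t:Char-Mackey-MP} by replacing the quantification over all of $\mathcal{C}(G,\tau)$ with quantification over the family $\Mac(G)$ of distinguished weak${}^\ast$-compact quasi-convex sets and the associated polar topologies $\TTT_K$; this buys a concrete description of the Mackey topology as $\vee_{K\in\Mac(G)}\TTT_K$ and a testable criterion (directedness of $\Mac(G)$), at the cost of the whole $\TTT_K$/$\Mac(G)$ apparatus of Section 3, whereas your argument buys the bare equivalence with essentially no overhead.
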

In Theorem \ref{t:Mackey-exist} we give a more precise characterization of lqc abelian groups admitting a Mackey group topology, and in Theorem \ref{t:Mackey-group-equi} we obtain a characterization of  Mackey groups.

It is well known (see Theorem 8.8.5 in \cite{Jar}) that the product of an arbitrary family of Mackey locally convex spaces is a Mackey space. One can naturally ask whether an analogous result holds true in the realm of lqc abelian groups. The following problem is Problem 8.82 of \cite{deLeo}:

\begin{problem} \label{prob:product-Mackey}
Let $\{G_i\}_{i\in I}$ be a family of Mackey groups. Determine under which conditions the product $\prod_{i\in I} G_i$ is a Mackey group.
\end{problem}
In Theorem \ref{t:Mackey-product} we answer Problem \ref{prob:product-Mackey} for the case of two groups.

Our proofs of the aforementioned Theorems \ref{t:compatible-description}, \ref{t:Mackey-exist}, \ref{t:Mackey-group-equi} and \ref{t:Mackey-product} are based on the method of constructing  of {\em polar} group topologies coming from Functional Analysis (this method is described in detail in Lemma \ref{l:topology-G-dual} below). 

Let $(E,\tau)$ be a locally convex space (lcs for short). The next question posed in \cite{Gab-Mackey} arises naturally.
\begin{problem} \label{prob:lcs-Mackey-group}
Under which conditions a Mackey lcs $E$ is also a Mackey group?
\end{problem}
Let us note that the condition of being a Mackey space in Problem \ref{prob:lcs-Mackey-group} is necessary. Indeed, Lemma 2.3 of \cite{Gabr-L(X)-Mackey} states that if an lcs $(E,\tau)$ is a Mackey group, then it is a Mackey space. Also one can ask whether the converse to that lemma is true, that is: {\em Is every Mackey space also a Mackey group}? It turns out that the answer to this  question is negative because there are metrizable lcs which are not Mackey groups, see \cite{Gab-Mackey}.   Moreover, very recently we show in \cite{Gabr-normed-Mackey} that there are even normed spaces which are not Mackey groups. 
In the main result of Section \ref{sec:Mackey-group-lcs},  Theorem \ref{t:Mackey-space-group}, we give the first characterization of locally convex spaces which are Mackey groups answering Problem \ref{prob:lcs-Mackey-group}.



\section{Notations and preliminaries} \label{sec:-polar}


In what follows we need some notations. Set $\NN:=\{1,2,\dots\}$ and $\w:=\{0,1,\dots\}$. Denote by $\Ss$ the unit circle group and set $\Ss_+ :=\{z\in  \Ss:\ {\rm Re}(z)\geq 0\}$.  If $z\in \Ss$, then $\arg(z)\in (-\pi,\pi]$.
For every $n\in\NN$, set
\[
\left(\tfrac{1}{n}\right)\Ss_+ =\Ss_n :=\big\{z\in \Ss: \arg(z)\in \left[-\tfrac{\pi}{2n},\tfrac{\pi}{2n}\right]\big\}.
\]

Let $\AAA$ and $\BB$ be to families of subsets of a set $\Omega$. Then $\BB$ {\em swallows} $\AAA$ if for every $A\in\AAA$ there is $B\in\BB$ such that $A\subseteq B$, and $\AAA$ is called {\em directed} if $\AAA$ swallows $\AAA\cup \AAA$ (i.e., for every $A_1,A_2\in\AAA$ there is $A\in\AAA$ such that $A_1 \cup A_2\subseteq A$).

Let $G$ be an abelian group. The order of an element $g\in G$ is denoted by $o(g)$. The number $\exp(G):=\sup\{ o(g):g\in G\}$ is called the {\em exponent} of $G$. If $\exp(G)$ is finite we say that the group $G$ is of {\em finite exponent} or {\em bounded}, and if $\exp(G)=\infty$ the group $G$ is of {\em infinite exponent} or {\em unbounded}. If $\kappa$ is a cardinal number, we denote by $G^{(\kappa)}=\bigoplus_{i\in\kappa} G$ the direct sum of $\kappa$-many of $G$, and the  set $\supp(g):=\{i\in\kappa: g_i\not=0\}$ is called the {\em support} of  $g=(g_i)\in G^{(\kappa)}$.
If $A$ is a subset of $G$ and $n\in\NN$, we define $(1)A:=A$, $(n+1)A:=(n)A +A$,
\[
A_{(n)}:=\{ g\in G: g, 2g,\dots, ng\in A\} \;\; \mbox{ and } \;\; nA:=\{na: a\in A\},
\]
and denote by $\langle A \rangle$ the subgroup of $G$ generated by $A$.


Let $(G,\tau)$ be an abelian topological group. Denote by $\Nn(G,\tau)$ ($\Nn_s(G,\tau)$) the family of all (resp. symmetric)  neighborhoods at the identity of $G$. If $U$ is a closed (open) neighborhood of zero and $n\in\NN$, then the set $U_{(n)}$ is a closed (open) neighborhood of zero (indeed, if $g\in \cl(U_{(n)})$ and a net $\{g_i\}_{i\in I}\subseteq U_{(n)}$ converges to $g$, then for every $k=1,\dots,n$, we have $kg=\lim_i kg_i$ belongs to $U$ by the closeness of $U$ and hence $g\in U_{(n)}$).  The group $G$ endowed with the discrete topology is denoted by $G_d$.
The {\em supremum} $\sup\{\tau_i:i\in I\}$, denoted usually by  $\vee\{\tau_i:i\in I\}$, of a family $\{\tau_i:i\in I\}$ of group topologies on $G$ is the group topology on $G$ whose base of neighborhoods at zero is formed by the sets
\[
U_{i_1}\cap\cdots\cap U_{i_n},\; \mbox{ where } n\in\NN \mbox{ and } U_{i_k}\in\Nn(G,\tau_{i_k}) \mbox{ for } k=1,\dots,n.
\]
We denote by $G^\ast$ the group of all characters (=homomorphisms from $G$ into $\Ss$) of $G$, and let $\widehat{G}$ be the group of all continuous characters of $G$. Note that $G^\ast =\widehat{G_d}$. If $\widehat{G}$ separates the points of $G$, the group $G$ is called {\em maximally almost periodic} ($MAP$ for short). We denote by $\MAPA$ the class of all $MAP$ abelian groups. Let $H$ and $S$ be subgroups of $G$ and $\widehat{G}$, respectively.
The weakest group topology on $G$ under which the elements of $S$ are continuous is denoted by $\sigma(G,S)$. If $S=\widehat{G}$, the topology $\sigma(G,\widehat{G})$ is called the {\em weak topology} on $G$. It is clear that $\sigma(G,\widehat{G})\leq \tau$, and $\sigma(G,\widehat{G})$ is Hausdorff if and only if $G$ is $MAP$. Analogously, the weakest group topology on $\widehat{G}$ under which the elements of $H$ are continuous is denoted by $\sigma(\widehat{G},H)$. In the case  $H=G$, the topology $\sigma(\widehat{G},G)$ is called the {\em weak${}^\ast$ topology} on $\widehat{G}$. For simplicity of notations we set $\Gsa:=\big(\widehat{G},\sigma(\widehat{G},G)\big)$.

Let $(G,\tau)$ be an abelian topological group. If $\chi\in G^\ast$, we will use  also the notation $(\chi,g)$ to denote $\chi(g)$ for $g\in G$. A subset $A$ of $G$ is called {\em quasi-convex} if for every $g\in G\SM A$ there exists   $\chi\in \widehat{G}$ such that $\chi(g)\notin \Ss_+$ and $\chi(A)\subseteq \Ss_+$.  For example, by Example 6.4 of \cite{Aus}, the set $\Ss_n$ is quasi-convex for every $n\in\NN$. An abelian topological group is called {\em locally quasi-convex} ({\em lqc} for short) if it admits a neighborhood base at zero consisting of quasi-convex sets.
The family of all quasi-convex neighborhoods at $0\in G$ is denoted by $\Nn_{qc}(G)$. We denote by $\LQC$ the family of all lqc abelian groups.
For every sets $A\subseteq G$ and  $B\subseteq \widehat{G}$, we define
\[
\begin{aligned}
A^\triangleright & :=\{\chi\in \widehat{G}:\ \chi(A)\subseteq \Ss_+\} \quad (\mbox{the polar of $A$ in $\widehat{G}$}) \\
A^{\blacktriangleright} & :=\{\chi\in G^\ast:\ \chi(A)\subseteq \Ss_+\}\quad (\mbox{the polar of $A$ in $G^\ast=\widehat{G_d}$}) \\
A^\perp & :=\{ \chi\in\widehat{G}: (\chi,g)=1, \mbox{ for all } g\in A\} \quad (\mbox{the annihilator of $A$ in $\widehat{G}$})\\
B^{\triangleleft} & :=\{ g\in G: (\chi,g)\in \Ss_+ \mbox{ for all } \chi\in B\} \quad (\mbox{the inverse polar of $B$ in $G$})\\
B^{\top} & :=\{ g\in G: (\chi,g)=1 \mbox{ for all } \chi\in B\} \quad (\mbox{the inverse annihilator of $B$ in $G$})
\end{aligned}
\]
and the sets $\qc(A)=\qc_G(A):=A^{\triangleright\triangleleft}$ and $\qc_{\widehat{G}}(B):=B^{\triangleleft\triangleright}$ are called the {\em quasi-convex hulls} of $A$ and $B$, respectively. Note that $A^\triangleright$ and $B^{\triangleleft}$ are closed and quasi-convex.
We shall use repeatedly the next simple lemma in which (i)-(viii) are standard or folklore (we give its detailed proof for the reader convenience, to make the article self-contained and for  further references).
\begin{lemma} \label{l:polar-1}
Let $(G,\tau)$ be an abelian topological group, and let $A, K$ and $F$ be subsets of $\widehat{G}$ containing zero. Then:
\begin{enumerate}
\item[{\rm(i)}] if $n\in\NN$, then $g\in \big((n)A \big)^{\triangleleft}$ if and only if $(\chi,g)\in \left(\tfrac{1}{n}\right)\Ss_+=\Ss_n$ for every $\chi\in A$;
\item[{\rm(ii)}] $\big( K+K \big)^{\triangleleft} +\big( K+K \big)^{\triangleleft}\subseteq  K^{\triangleleft}$;
\item[{\rm(iii)}] $\big( K+K \big)^{\triangleleft} \cap \big( F+F \big)^{\triangleleft} \subseteq \big( K +F\big)^{\triangleleft} \subseteq K^{\triangleleft} \cap F^{\triangleleft}$;
\item[{\rm(iv)}] if $n\in\NN$ and if $V\subseteq G$ is such that $(n)V\subseteq K^{\triangleleft}$, then $V \subseteq \big((n)K \big)^{\triangleleft}$;
\item[{\rm(v)}] if $K$ and $F$ are subgroups of $\widehat{G}$, then $\big( K +F\big)^{\triangleleft} = K^{\triangleleft} \cap F^{\triangleleft}$;
\item[{\rm(vi)}] if $n\in\NN$ and $C\subseteq G$ contains zero, then
\[
\begin{aligned}
& \big((n)\qc(C)\big)^{\triangleright}=\big( (n)C \big)^{\triangleright}= (C^{\triangleright})_{(n)},\quad  \qc\big((n)C\big)+\qc\big((n)C\big)\subseteq \qc\big((2n)C\big)\\
& \big((n)\qc_{\widehat{G}}(A)\big)^{\triangleleft}=\big( (n)A \big)^{\triangleleft};
\end{aligned}
\]
consequently, the set $X:=\bigcup_{n\in\NN} \qc\big((n)C\big)$ is a subgroup of $G$;
\item[{\rm(vii)}] $(A^{\triangleleft})_{(n)} = \big( (n)A\big)^{\triangleleft}$ for every $n\in\NN$, so $\bigcap_{n\in\NN} (A^{\triangleleft})_{(n)}$ is a subgroup of $G$;
\item[{\rm(viii)}] if $n,k\in\NN$ and $C\subseteq G$ is quasi-convex, then $C_{(n)}$ is quasi-convex and $C_{(2n)}+C_{(2n)}\subseteq C_{(n)}$; so $(2^k)C_{(2^k n)}\subseteq C_{(n)}$ and  $\bigcap_{n\in\NN} C_{(n)}$ is a subgroup of $G$;
\item[{\rm(ix)}]  if $n\in\NN$,  $H$ is a subgroup of $G$ and $C\subseteq G$, then $(C\cap H)_{(n)}=C_{(n)}\cap H$;
\item[{\rm(x)}] if $H$ is a subgroup of $G$ and $D=(H\cap A^{\triangleleft})^{\triangleright_H}$ $\big($where the second polar $\triangleright_H$ is taken in the dual group $\widehat{H}$ of $(H,\tau{\restriction}_H)$$\big)$, then
    \[
    (D^{\triangleleft_H})_{(n)}=\big( (n)D\big)^{\triangleleft_H}=H\cap \big( (n)A\big)^{\triangleleft}=H\cap(A^{\triangleleft})_{(n)} \;\; \mbox{ for all }\; n\in\NN;
    \]
\item[{\rm(xi)}]  if $C\subseteq G$ contains zero, then $(C+C)^\triangleright +(C+C)^\triangleright\subseteq C^\triangleright$;
\item[{\rm(xii)}]  if $C\subseteq G$  contains zero, then $\big((n)C\big)^\triangleright =(C^\triangleright)_{(n)}$ for every $n\in\NN$.
\end{enumerate}
\end{lemma}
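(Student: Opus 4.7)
The plan is to reduce the whole list to two arithmetic facts about the arcs $\Ss_n\subseteq\Ss$: (a) $z\in\Ss_n$ iff $z,z^2,\ldots,z^n\in\Ss_+$, and (b) $\Ss_{2m}\cdot\Ss_{2m}\subseteq\Ss_m$. Fact (a) is verified by writing $z=e^{i\theta}$: if $|\theta|\le\pi/(2n)$ then $|k\theta|\le\pi/2$ for $k\le n$; conversely, $z^n\in\Ss_+$ confines $\theta$ to one of $n$ arcs of width $\pi/n$ rotated by multiples of $2\pi/n$, and the additional constraints $z^k\in\Ss_+$ for $k<n$ single out the arc through $0$. Fact (b) is immediate from $|\theta_1|,|\theta_2|\le\pi/(4m)$.

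Clause (i) is an immediate translation of (a) using $0\in A$: the hypothesis $0\in A$ yields $k\chi\in(n)A$ for $k\le n$ (pad the sum with zeros), so $g\in((n)A)^{\triangleleft}$ forces $\chi(g)^k=(k\chi)(g)\in\Ss_+$ for $\chi\in A$ and $k\le n$, whence $\chi(g)\in\Ss_n$; conversely any $n$-fold product of elements of $\Ss_n$ lies in $\Ss_+$. Clauses (vii) and (xii) are then tautological reformulations, swapping the role of $G$ and $\widehat G$ in the pairing. Clauses (ii), (iii), (xi) are the ``square trick'': $g\in(K+K)^{\triangleleft}$ with $2\chi\in K+K$ gives $\chi(g)^2\in\Ss_+$, so $\chi(g)\in\Ss_2$, and (b) delivers $\chi(g_1+g_2)\in\Ss_+$. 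Clause (iv) (implicitly using $0\in V$) similarly yields $\chi(v)\in\Ss_n$ from $(n)V\subseteq K^{\triangleleft}$, which by (i) is precisely $v\in((n)K)^{\triangleleft}$. Clause (v) uses the separate observation that for a subgroup $K$ the iterates $n\chi$ exhaust $K$, so $\chi(g)^n\in\Ss_+$ for every $n\in\ZZ$ forces $\chi(g)=1$; thus $K^{\triangleleft}=K^{\perp}$, and (v) becomes the standard annihilator identity $(K+F)^{\perp}=K^{\perp}\cap F^{\perp}$.

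The remaining clauses combine these with the tautology $C^{\triangleright\triangleleft\triangleright}=C^{\triangleright}$. For (vi), chain $((n)\qc(C))^{\triangleright}=(\qc(C)^{\triangleright})_{(n)}=(C^{\triangleright})_{(n)}=((n)C)^{\triangleright}$ via (xii), and the dual identity $((n)\qc_{\widehat G}(A))^{\triangleleft}=((n)A)^{\triangleleft}$ similarly via (vii); the inclusion $\qc((n)C)+\qc((n)C)\subseteq\qc((2n)C)$ applies the square trick dually, since any $\chi\in((2n)C)^{\triangleright}=(C^{\triangleright})_{(2n)}$ has $2\chi\in(C^{\triangleright})_{(n)}=((n)C)^{\triangleright}$, giving the required $\Ss_2$ bound on elements of $\qc((n)C)$. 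The union $\bigcup_n\qc((n)C)$ is a subgroup because every $\qc(D)$ is symmetric ($\Ss_+$ is stable under conjugation, hence so are $D^{\triangleright}$ and $D^{\triangleright\triangleleft}$). Clause (viii) writes quasi-convex $C=A^{\triangleleft}$ and invokes (vii); the doubling $(2^k)C_{(2^kn)}\subseteq C_{(n)}$ is induction from $C_{(2n)}+C_{(2n)}\subseteq C_{(n)}$. Clause (ix) is immediate. The only genuinely new step is in (x): one must recognize $H\cap A^{\triangleleft}=\{\chi{\uhr}_H:\chi\in A\}^{\triangleleft_H}$, so $H\cap A^{\triangleleft}$ is quasi-convex in $H$ and equals $D^{\triangleleft_H}$; the claimed chain of equalities then follows from (vii) applied in both $G$ and $H$ together with (ix). The main obstacle throughout is purely notational bookkeeping---keeping polars in the correct group straight, and invoking $0\in A,K,F,V,C$ whenever one needs $(k)X\subseteq(n)X$ or $k\chi\in(n)K$ for $k\le n$.
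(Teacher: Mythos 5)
Your proof is correct and follows essentially the same route as the paper's: everything is reduced to the arc arithmetic $z\in\Ss_n\Leftrightarrow z,z^2,\dots,z^n\in\Ss_+$ and $\Ss_{2m}\cdot\Ss_{2m}\subseteq\Ss_m$, and the remaining clauses are handled by the same direct polar computations. The only local deviations are harmless: in (v) you pass through $K^{\triangleleft}=K^{\top}$ and the annihilator identity rather than reading the equality off (iii) with $K+K=K$; for the first identity in (vi) you chain (xii) with the tautology $C^{\triangleright\triangleleft\triangleright}=C^{\triangleright}$ instead of the paper's argument via quasi-convexity of $\chi^{-1}(\Ss_n)$; and in (x) you identify $H\cap A^{\triangleleft}$ as the inverse polar in $H$ of the restricted characters rather than as the preimage of a quasi-convex set under the inclusion $H\to G$.
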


\begin{proof}
(i) Let $g\in \big((n)A \big)^{\triangleleft}$. If $\chi\in A$, then the inclusion $0\in A$ implies $(\chi,g),\dots, (n\chi,g)=(\chi,g)^n\in \Ss_+$. By induction on $n$, it is easy to see that
$ 
(\chi,g)\in \left(\tfrac{1}{n}\right)\Ss_+.
$ 
Conversely,  if $(\chi,g)\in \left(\tfrac{1}{n}\right)\Ss_+$ for every $\chi\in A$, then
\[
(\chi_1+\dots+\chi_n, g)=(\chi_1,g)\cdots(\chi_n,g)\in \left(\tfrac{1}{n}\right)\Ss_+ \stackrel{n}{\cdots} \left(\tfrac{1}{n}\right)\Ss_+ =\Ss_+
\]
for every $\chi_1,\dots,\chi_n\in A$. Thus $g\in \big((n)A \big)^{\triangleleft}$.

(ii) Let $\chi\in K$, and let $g,h\in \big(K+K \big)^{\triangleleft}$. Then, by (i) applied to $n=2$, we obtain
\[
(\chi,g+h)=(\chi,g)\cdot (\chi,h)\in \left(\tfrac{1}{2}\right)\Ss_+ \cdot \left(\tfrac{1}{2}\right)\Ss_+ =\Ss_+.
\]
Thus $g+h\in K^{\triangleleft}$.

(iii) Let $g\in \big( K+K \big)^{\triangleleft} \cap \big( F+F \big)^{\triangleleft}$, and let $\chi\in K$ and $f\in F$. Since $0\in K\cap F$, (i) implies
\[
(\chi+f,g)=(\chi,g)\cdot (f,g) \in \left(\tfrac{1}{2}\right)\Ss_+ \cdot \left(\tfrac{1}{2}\right)\Ss_+ =\Ss_+.
\]
Thus $g\in (K+F)^{\triangleleft}$ and the first inclusion is proved. The second inclusion $\big( K +F\big)^{\triangleleft} \subseteq K^{\triangleleft} \cap F^{\triangleleft}$ immediately follows from the fact that $0\in K\cap F$ (and hence $K\cup F\subseteq K+F$).

(iv) Let $\chi_1,\dots,\chi_n\in K$ and  $g\in V$. Then for every $i,j=1,\dots,n$, we have $(\chi_i, jg)=(\chi_i,g)^j\in \Ss_+$, and hence $(\chi_i,g)\in \left(\tfrac{1}{n}\right)\Ss_+$. Therefore
\[
(\chi_1+\dots+\chi_n, g)=(\chi_1,g)\cdots(\chi_n,g)\in \left(\tfrac{1}{n}\right)\Ss_+ \stackrel{n}{\cdots} \left(\tfrac{1}{n}\right)\Ss_+ =\Ss_+.
\]
Thus $g\in \big((n)K \big)^{\triangleleft}$.

(v) immediately follows from (iii).

(vi) First we prove the equality $\big((n)\qc(C)\big)^{\triangleright}=\big( (n)C \big)^{\triangleright}$ (the third equality of this clause can be proved analogously). The inclusion ``$\subseteq$'' is clear. To prove the inverse inclusion ``$\supseteq$'', let $\chi\in \big( (n)C \big)^{\triangleright}$. By (i) applied to $\Gsa$, we have $\chi\in \big( (n)C \big)^{\triangleright}$ if and only if $(\chi,g)\in \left(\tfrac{1}{n}\right)\Ss_+$ for every $g\in C$. But since $\left(\tfrac{1}{n}\right)\Ss_+$ is quasi-convex, the set $\chi^{-1} \big(\left(\tfrac{1}{n}\right)\Ss_+\big)$ is a quasi-convex subset of $G$ containing $C$. Therefore $\qc(C)\subseteq \chi^{-1} \big(\left(\tfrac{1}{n}\right)\Ss_+\big)$ and hence $(n)\qc(C)\subseteq \chi^{-1}(\Ss_+)$ as desired.

Now we prove the equality $\big( (n)C \big)^{\triangleright}= (C^{\triangleright})_{(n)}$. Let $\chi\in (C^{\triangleright})_{(n)} $ and $g\in C$. Then, for every $i=1,\dots,n$, we have $(i\chi,g)=(\chi,g)^i\in \Ss_+$ and hence $(\chi,g)\in \Ss_n$. Now, for every $g_1,\dots,g_n\in C$, we obtain
\[
(\chi, g_1+\dots+g_n)=(\chi,g_1)\cdots(\chi,g_n)\in \Ss_n \stackrel{n}{\cdots} \Ss_n =\Ss_+
\]
and hence  $\chi\in \big( (n)C\big)^{\triangleright}$. Thus $(C^{\triangleright})_{(n)} \subseteq \big( (n)C\big)^{\triangleright}$.
To prove the inverse inclusion  $(C^{\triangleright})_{(n)} \supseteq \big( (n)C\big)^{\triangleright}$, fix $\chi\in  \big( (n)C\big)^{\triangleright}$. Then, similar to (i), $(\chi,g)\in \Ss_n$ for every $g\in C$. Therefore $(i\chi,g)=(\chi,g)^i\in \Ss_+$  for every $g\in C$ and each $i=1,\dots,n$. Thus $\chi,2\chi,\dots,n\chi\in C^{\triangleright}$ and hence $\chi\in (C^{\triangleright})_{(n)}$.

Applying the first equality to $k=2$ and $C':=(n)C$, we obtain
\[
(C'+C')^\triangleright=((n)C+(n)C)^\triangleright=\big(\qc((n)C)+\qc((n)C)\big)^\triangleright
\]
and hence $\qc((n)C)+\qc((n)C)\subseteq \big(\qc((n)C)+\qc((n)C)\big)^{\triangleright\triangleleft} \subseteq \qc((2n)C)$. This proves the second inclusion.

To show that $X$ is a subgroup of $G$, first we note that $X$ is symmetric since all $\qc((n)C)$ are symmetric. Now, if $x,y\in X$, choose $n$ such that $x,y\in \qc((n)C)$. Then, by the second inclusion, $x+y\in \qc((2n)C)\subseteq X$. Thus $X$ is a subgroup of $G$.


(vii) Let $g\in (A^{\triangleleft})_{(n)} $ and $\chi\in A$. For every $i=1,\dots,n$, we have $(\chi,ig)=(\chi,g)^i\in \Ss_+$ and hence $(\chi,g)\in \Ss_n$. Now, for every $\chi_1,\dots,\chi_n\in A$, we obtain
\[
(\chi_1+\dots+\chi_n, g)=(\chi_1,g)\cdots(\chi_n,g)\in \Ss_n \stackrel{n}{\cdots} \Ss_n =\Ss_+
\]
and hence  $g\in \big( (n)A\big)^{\triangleleft}$. Thus $(A^{\triangleleft})_{(n)} \subseteq \big( (n)A\big)^{\triangleleft}$.

To prove the inverse inclusion  $(A^{\triangleleft})_{(n)} \supseteq \big( (n)A\big)^{\triangleleft}$, fix $g\in  \big( (n)A\big)^{\triangleleft}$. Then, by (i), $(\chi,g)\in \Ss_n$ for every $\chi\in A$. Therefore $(\chi,ig)=(\chi,g)^i\in \Ss_+$  for every $\chi\in A$ and each $i=1,\dots,n$. Thus $g,2g,\dots,ng\in A^{\triangleleft}$ and hence $g\in (A^{\triangleleft})_{(n)}$.

To show that $H:=\bigcap_{n\in\NN} (A^{\triangleleft})_{(n)}$ is a subgroup of $G$, we note first that $H$ is symmetric because so are all $(A^{\triangleleft})_{(n)}$. Now, let $h_1,h_2\in H$. Then for every $n\in\NN$, we have $h_1,h_2\in (A^{\triangleleft})_{(2n)}=\big( (2n)A\big)^{\triangleleft}$ and therefore, by (ii),
\[
h_1+h_2\in \big( (2n)A\big)^{\triangleleft}+\big( (2n)A\big)^{\triangleleft}=\big( (n)A+(n)A\big)^{\triangleleft}+\big( (n)A+(n)A\big)^{\triangleleft}\subseteq \big( (n)A\big)^{\triangleleft}=(A^{\triangleleft})_{(n)}.
\]
Hence $h_1+h_2\in H$. Thus $H$ is a subgroup of $G$.

(viii) Set $A:=C^\triangleright$. Since $C$ is quasi-convex we have $C=A^\triangleleft$.  For every $n\in\NN$, (vii) implies $C_{(n)}=\big( (n)A\big)^{\triangleleft}$ and hence $C_{(n)}$ is quasi-convex. Therefore, by (ii) applied to $K=(n)A$, $C_{(2n)}+C_{(2n)}\subseteq C_{(n)}$. The inclusion $(2^k)C_{(2^k n)}\subseteq C_{(n)}$ follows from $C_{(2n)}+C_{(2n)}\subseteq C_{(n)}$ by induction on $k\in\NN$. Finally, by (vii), $\bigcap_{n\in\NN} C_{(n)}=\bigcap_{n\in\NN} (A^{\triangleleft})_{(n)}$ is a subgroup of $G$.

(ix)The assertion follows from the next equivalences: $g\in (C\cap H)_{(n)}$ if and only if $g,2g,\dots,ng\in C\cap H$ if and only if $g,\dots,ng\in C$ and $g\in H$ (since $H$ is a subgroup of $G$) if and only if $g\in C_{(n)}$ and $g\in H$.

(x) By (vii), it suffices to prove that $(D^{\triangleleft_H})_{(n)}=H\cap(A^{\triangleleft})_{(n)}$. Let $\Id:H\to G$ be the identity inclusion. Since $A^\triangleleft$ is a quasi-convex subset of $G$, we obtain that $\Id^{-1}(A^\triangleleft)=A^\triangleleft \cap H$ is a quasi-convex subset of $H$. Therefore $D^{\triangleleft_H}=H\cap A^\triangleleft$. Thus, by (ix) applied to $C=A^\triangleleft$, we obtain $(D^{\triangleleft_H})_{(n)}=H\cap(A^{\triangleleft})_{(n)}$.

(xi) and (xii) can be proved as (ii) and (vii), respectively. \qed
%
%
\end{proof}

Let $G$ and $H$ be two abelian topological groups.  A family $\FF$  of continuous homomorphisms from $G$ to $H$ is called {\em equicontinuous} if for every neighborhood $V$ of zero in $H$ there is a neighborhood $U$ of zero in $G$ such that $f(U)\subseteq V$ for every $f\in \FF$. We shall use the following standard fact (since we do not know an exact reference we prove this result).

\begin{proposition} \label{p:Mackey-equi}
Let $(G,\tau)$ be an abelian topological  group, and let $K$ be a subset of $\widehat{G}$. Then $K$ is equicontinuous if and only if $K^{\triangleleft}$ is a neighborhood of zero. In this case $\qc_{\widehat{G}}\big((n)K\big)$ is equicontinuous for every $n\in\NN$.
\end{proposition}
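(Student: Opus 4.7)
The plan is to prove the equivalence directly from the definitions, using the fact that $\{\Ss_n\}_{n\in\NN}$ forms a neighborhood base at $1\in\Ss$, together with the identity $(K^\triangleleft)_{(n)}=\big((n)K\big)^\triangleleft$ from Lemma \ref{l:polar-1}(vii).

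For the forward implication, suppose $K$ is equicontinuous. Since $\Ss_+$ is a neighborhood of $1\in\Ss$, equicontinuity supplies a neighborhood $U\in\Nn(G,\tau)$ with $\chi(U)\subseteq\Ss_+$ for every $\chi\in K$; that is exactly $U\subseteq K^\triangleleft$, so $K^\triangleleft$ is a neighborhood of zero.

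For the reverse implication, assume $K^\triangleleft\in\Nn(G,\tau)$. Because $\{\Ss_n:n\in\NN\}$ is a neighborhood base at $1\in\Ss$, it suffices to produce for each $n\in\NN$ some $U\in\Nn(G,\tau)$ with $\chi(U)\subseteq\Ss_n$ for all $\chi\in K$. I take $U:=(K^\triangleleft)_{(n)}$; since $K^\triangleleft$ is a neighborhood of zero and each multiplication $g\mapsto ig$ is continuous, the intersection
\[
(K^\triangleleft)_{(n)}=\bigcap_{i=1}^{n}\{g\in G: ig\in K^\triangleleft\}
\]
is a neighborhood of zero. By Lemma \ref{l:polar-1}(vii), $(K^\triangleleft)_{(n)}=\big((n)K\big)^\triangleleft$, and then by Lemma \ref{l:polar-1}(i) every $g\in U$ satisfies $(\chi,g)\in\Ss_n$ for all $\chi\in K$, as required.

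For the final assertion, note the standard identity $K^{\triangleleft\triangleright\triangleleft}=K^\triangleleft$: the inclusion $K\subseteq K^{\triangleleft\triangleright}$ (immediate from the definition of $K^\triangleleft$) gives $K^{\triangleleft\triangleright\triangleleft}\subseteq K^\triangleleft$, while applying the same fact with $K^\triangleleft\subseteq G$ in place of $K$ yields the reverse inclusion. Consequently, for each $n\in\NN$,
\[
\bigl(\qc_{\widehat G}\bigl((n)K\bigr)\bigr)^\triangleleft=\bigl((n)K\bigr)^{\triangleleft\triangleright\triangleleft}=\bigl((n)K\bigr)^\triangleleft=(K^\triangleleft)_{(n)},
\]
which is a neighborhood of zero by the argument above. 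Applying the equivalence just proved to the set $\qc_{\widehat G}\big((n)K\big)$ shows it is equicontinuous.

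The only mild obstacle is keeping the four polar operations straight and invoking the correct item of Lemma \ref{l:polar-1}; otherwise the proof is essentially a translation between the neighborhood-base formulation of equicontinuity and the polar formalism.
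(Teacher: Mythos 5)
Your argument is correct and follows essentially the same route as the paper: the forward implication is the definition of equicontinuity applied to $V=\Ss_+$, the reverse implication identifies $\big((n)K\big)^{\triangleleft}=(K^{\triangleleft})_{(n)}$ as a neighborhood of zero via Lemma \ref{l:polar-1}, and the final claim reduces to the tripolar identity. The only point to add is that Lemma \ref{l:polar-1}(i) and (vii) are stated for subsets of $\widehat{G}$ \emph{containing zero} (and (i) genuinely fails otherwise, e.g.\ for $K=\{\chi\}$ with $(\chi,g)=e^{2\pi i/n}$), so before invoking them you should replace $K$ by $K\cup\{0\}$, observing that $K^{\triangleleft}=(K\cup\{0\})^{\triangleleft}$ and that this does not affect equicontinuity --- exactly the one-line normalization the paper performs.
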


\begin{proof}
Assume that $K$ is equicontinuous. Then for $V=\Ss_+$, there is $U\in\Nn(G)$ such that $\chi(U)\subseteq \Ss_+$ for every $\chi\in K$. This means that $U\subseteq K^{\triangleleft}$. Thus $K^{\triangleleft}$ is a neighborhood of zero. Conversely, assume that $K^{\triangleleft}$ is a neighborhood of zero. Since $K^{\triangleleft}=(K\cup\{0\})^{\triangleleft}$, we can assume that $0\in K$. For every $n\in\NN$, choose $U\in\Nn(G)$ such that $(n)U\subseteq K^{\triangleleft}$.  Then, by (i) and (iv) of Lemma  \ref{l:polar-1}, we obtain $(\chi,g)\in  \big(\tfrac{1}{n}\big)\Ss_+$ for every $\chi\in  K$ and each $g\in U$. Thus $K$ is equicontinuous.

To prove the last assertion, fix $n\in\NN$. Since $K^{\triangleleft}$ is a neighborhood of zero, (vii) of Lemma \ref{l:polar-1} implies that $\big((n)K\big)^\triangleleft= \big(K^\triangleleft\big)_{(n)}$ is also a neighborhood of zero. Hence $\qc_{\widehat{G}}\big((n)K\big)=\big((n)K\big)^{\triangleleft\triangleright}$ is equicontinuous. \qed
\end{proof}

The dual group $\widehat{G}$ of an abelian topological group $G$ endowed with the compact-open topology $\tau_k$ is denoted by $G^{\wedge}$.
We shall use the next folklore fact, for a more general assertion see Proposition 3.5 of \cite{Aus}  (nevertheless we add its short proof for the sake of completeness).

\begin{proposition} \label{p:polar-U-k-compact}
Let $(G,\tau)$ be an abelian topological group. Then for every neighborhood $U$ of zero, the polar $U^{\triangleright}$ is a compact subset of $G^\wedge$. Consequently,  $ U^{\triangleright}$ is also $\sigma(\widehat{G},G)$-compact.
\end{proposition}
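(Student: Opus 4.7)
The plan is to follow the standard Banach--Alaoglu strategy adapted to topological groups: realize $U^{\triangleright}$ as a closed subset of a Tychonoff product of copies of $\Ss$, obtain compactness in the pointwise topology, and then upgrade to the compact--open topology using the equicontinuity coming from Proposition \ref{p:Mackey-equi}.

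First I would embed $\widehat{G}$ into the compact space $\Ss^G$ (with the product topology) via $\chi\mapsto (\chi(g))_{g\in G}$, and verify that the image of $U^{\triangleright}$ is closed. This splits into two checks: (a) being a homomorphism is a pointwise-closed condition on $\Ss^G$, since the equations $f(g+h)=f(g)f(h)$ survive pointwise limits; and (b) the polar condition $f(U)\subseteq \Ss_+$ is pointwise closed because $\Ss_+$ is closed in $\Ss$. By Tychonoff, $\Ss^G$ is compact, and hence $U^{\triangleright}$ is compact in the topology of pointwise convergence, which on $\widehat{G}$ is exactly $\sigma(\widehat{G},G)$. This already gives the second, ``consequently'' assertion.

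To obtain compactness in $G^\wedge$, I would show that on $U^{\triangleright}$ the compact--open topology $\tau_k$ agrees with the pointwise topology $\sigma(\widehat{G},G)$. Since $U\in \Nn(G)$, Proposition \ref{p:Mackey-equi} yields that $U^{\triangleright}$ is equicontinuous: for each $n\in\NN$ there exists $V_n\in \Nn(G)$ with $\chi(V_n)\subseteq \Ss_n$ for all $\chi\in U^{\triangleright}$. A standard Ascoli-type argument then gives coincidence of the two topologies on $U^{\triangleright}$: given a compact set $K\subseteq G$ and $n\in\NN$, cover $K$ by finitely many translates $g_i+V_n$, and control $\chi(g)-\chi'(g)$ for $g\in K$ by the pointwise values at the finitely many points $g_i$ together with the uniform equicontinuity estimate on $V_n$. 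This shows that every $\tau_k$-neighborhood of a point of $U^{\triangleright}$ contains a pointwise-neighborhood of that point relative to $U^{\triangleright}$, so the identity $\bigl(U^{\triangleright},\sigma(\widehat{G},G)\bigr)\to \bigl(U^{\triangleright},\tau_k\bigr)$ is continuous, and being a continuous image of a compact space, $U^{\triangleright}$ is $\tau_k$-compact.

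The main delicate point is the Ascoli-type argument in the last paragraph, since we are not in a uniform space of scalar-valued maps but of $\Ss$-valued characters; however, equicontinuity in the form supplied by Proposition \ref{p:Mackey-equi} (with the scale $\Ss_n$) is exactly what makes the finite-cover-plus-uniform-estimate trick work. Everything else is formal: closedness of the homomorphism and polar conditions under pointwise limits, Tychonoff's theorem, and the inequality $\sigma(\widehat{G},G)\le \tau_k$ for the final ``consequently'' clause.
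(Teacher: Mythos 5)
Your argument is essentially the paper's: the paper likewise derives equicontinuity of $U^{\triangleright}$ from Proposition \ref{p:Mackey-equi}, observes that $U^{\triangleright}$ is pointwise closed, and then simply cites the Ascoli theorem \cite[Theorem~3.4.20]{Eng} instead of unfolding it into the Tychonoff-plus-finite-cover argument you give (your finite-cover step is correct, and is in fact easier than the general Ascoli argument because characters are homomorphisms). One step in your write-up should be made explicit: your checks (a) and (b) show that the set of \emph{all} characters $f\in G^{\ast}$ with $f(U)\subseteq\Ss_+$, i.e.\ $U^{\blacktriangleright}$, is closed in $\Ss^{G}$, whereas the image of $U^{\triangleright}$ consists of \emph{continuous} characters, so you must still argue that a pointwise limit of elements of $U^{\triangleright}$ is continuous and hence again lies in $U^{\triangleright}$. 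This is immediate either from Proposition \ref{p:basis-weak}(ii) (a character with $f(U)\subseteq\Ss_+$ for some neighborhood $U$ of zero is automatically continuous) or from the fact that the pointwise closure of an equicontinuous family is equicontinuous; with that sentence added, the proof is complete.
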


\begin{proof}
Denote by $\CC(G,\Ss)$  the space $C(G,\Ss)$  of all continuous functions from $G$ to $\Ss$ endowed with the compact-open topology $\tau_k$. It is clear that $G^\wedge$ is a closed subspace of $\CC(G,\Ss)$. By Proposition \ref{p:Mackey-equi}, the set $U^{\triangleright}$ is equicontinuous and hence evenly continuous. Clearly, $U^{\triangleright}$ is also closed in the pointwise topology on $C(G,\Ss)$. Now (the sufficiency in) the Ascoli theorem \cite[Theorem~3.4.20]{Eng} implies that $U^{\triangleright}$ is a compact subset of $\CC(G,\Ss)$ and hence of $G^\wedge$. The last assertion follows from the fact $\sigma(\widehat{G},G)\leq \tau_k$. \qed
\end{proof}

Let $(G,\tau)$ be an abelian topological group. The homomorphism $\alpha_G : G\to G^{\wedge\wedge} $, $g\mapsto (\chi\mapsto \chi(g))$, is called {\em the canonical homomorphism}. If $\alpha_G$ is a topological isomorphism, the group $G$ is called (Pontryagin) {\em  reflexive}. Every locally compact abelian (LCA) group is reflexive, and each reflexive group is locally quasi-convex. We shall use the next assertion.
\begin{proposition} \label{p:metriz-Baire-group}
Let $G$ be a metrizable lqc abelian group. Then $G$ is locally precompact if and only if $G^\wedge$ has the Baire property. In this case $G^\wedge$ is an LCA group.
\end{proposition}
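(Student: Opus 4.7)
The plan is to prove the two implications separately, with the final LCA conclusion emerging from the proof of each direction.

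\emph{Necessity.} Assume $G$ is locally precompact. Its Raikov completion $\widetilde{G}$ is then a metrizable, complete, locally precompact, and hence locally compact abelian group. I would first verify that the restriction map $\widehat{\widetilde{G}}\to\widehat{G}$, $\chi\mapsto\chi{\restriction}_G$, is a topological isomorphism between $(\widetilde{G})^\wedge$ and $G^\wedge$: bijectivity follows from density of $G$ in $\widetilde{G}$ together with completeness of $\Ss$, so that each continuous character of $G$ has a unique continuous extension to $\widetilde{G}$. Coincidence of the compact-open topologies rests on the approximation of any compact $K\subseteq\widetilde{G}$ by $K'+V$, where $K'\subseteq G$ is compact and $V$ is a prescribed symmetric neighborhood of zero, which is possible since $G$ is dense and locally precompact. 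Pontryagin duality then gives that $(\widetilde{G})^\wedge$ is LCA, so $G^\wedge$ is LCA and in particular Baire.

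\emph{Sufficiency.} Assume $G^\wedge$ is Baire. Since $G$ is metrizable and lqc, fix a countable decreasing base $(U_n)_{n\in\NN}$ at $0\in G$ consisting of quasi-convex sets. If $\chi\in\widehat{G}$, continuity of $\chi$ at $0$ yields some $n$ with $U_n\subseteq\chi^{-1}(\Ss_+)$, i.e., $\chi\in U_n^\triangleright$; thus $\widehat{G}=\bigcup_{n\in\NN}U_n^\triangleright$. By Proposition \ref{p:polar-U-k-compact} each $U_n^\triangleright$ is compact, hence closed, in $G^\wedge$. The Baire property then furnishes an index $n_0$ such that $U_{n_0}^\triangleright$ has nonempty interior; choosing $\chi_0$ in this interior, the translate $U_{n_0}^\triangleright-\chi_0$ is a compact neighborhood of zero in $G^\wedge$, so $G^\wedge$ is locally compact, i.e., an LCA group.

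By Pontryagin duality, $G^{\wedge\wedge}$ is LCA as well. I then invoke Chasco's theorem, which asserts that the canonical evaluation $\alpha_G:G\to G^{\wedge\wedge}$ is a topological embedding for every metrizable lqc abelian group. Consequently $G$ embeds as a topological subgroup of the LCA group $G^{\wedge\wedge}$, and the intersection of $G$ with a compact neighborhood of zero in $G^{\wedge\wedge}$ produces a precompact neighborhood of zero in $G$, proving that $G$ is locally precompact. The same argument already yields that $G^\wedge$ is LCA, establishing the final clause of the proposition.

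The two delicate points are the topological identification $G^\wedge\cong(\widetilde{G})^\wedge$ in the necessity direction, which requires the compact-set approximation argument indicated above, and the invocation of Chasco's embedding theorem in the sufficiency direction in order to transport local compactness of $G^{\wedge\wedge}$ back to $G$.
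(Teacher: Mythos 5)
Your proof is correct and follows essentially the same route as the paper: completion to an LCA group plus the Außenhofer--Chasco identification $G^\wedge\cong\overline{G}^\wedge$ for the necessity, and for the sufficiency the Baire argument applied to the cover $\widehat{G}=\bigcup_n U_n^\triangleright$ by compact polars, followed by Pontryagin duality and the embedding $\alpha_G:G\to G^{\wedge\wedge}$ valid for metrizable lqc groups. The only difference is that you sketch a proof of the dense-subgroup duality $G^\wedge\cong\overline{G}^\wedge$ where the paper simply cites it.
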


\begin{proof}
Assume that $G$ is locally precompact, so a completion $\overline{G}$ of $G$ is an LCA group. By \cite{Aus} or \cite{Cha}, $G^\wedge=\overline{G}^\wedge$ and hence, by the Pontryagin Duality Theorem,  $G^\wedge$ is locally compact and hence it has the Baire property.

Conversely, assume that $G^\wedge$ has the Baire property.
Let $\{U_n\}_{n\in\NN}\subseteq \Nn_{qc}(G)$ be a base at zero such that $U_{n+1}+U_{n+1}\subseteq U_n$ for all $n\in\NN$. It is proved in \cite{Cha} that the polars $U_n^{\triangleright}$ $(n\in\NN)$ swallow the compact subsets of $G^\wedge$ and are also compact in $G^\wedge$ by Proposition \ref{p:polar-U-k-compact}. Since $G^\wedge=\bigcup_{n\in\NN} U_n^{\triangleright}$, by the Baire property, there is $n\in\NN$ such that $U_n^{\triangleright}$ is a closed and compact neighborhood of a point in $G^\wedge$. Thus $G^\wedge$ is a locally compact group. By the Pontryagin Duality Theorem, the bidual group $G^{\wedge\wedge}$ is also locally compact. Since $G$ is locally quasi-convex and metrizable, the canonical map $\alpha_G$ is an embedding (see 5.12 and 6.10 in \cite{Aus}). Thus $G$ is a locally precompact group.\qed
\end{proof}

A subgroup $H$ of an abelian topological group $G$ is called {\em dually closed} in $G$ if for every $g\in G\SM H$ there exists  $\chi \in H^\perp$ such that $(\chi,g)\not= 1$, i.e., $H=\cap_{\chi\in H^\perp} \ker(\chi)$. Clearly, every dually closed subgroup is closed. A subgroup $H$ is called {\em dually embedded} in $G$ if every continuous character of $H$ can be extended to a  continuous character of $G$. The next result is folklore.

\begin{proposition} \label{p:precompact-dc-de}
Every $($closed$)$ subgroup $H$  of a precompact abelian group $G$ is dually embedded $($and dually closed$)$ in $G$.
\end{proposition}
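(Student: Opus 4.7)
The plan is to lift the problem to the completion of $G$ and invoke the standard duality properties of closed subgroups of compact abelian groups. Since $(G,\tau)$ is precompact, its Hausdorff completion $\tilde G$ is a compact abelian group containing $G$ as a dense subgroup. Continuous characters take values in the compact group $\Ss$, so they are uniformly continuous and extend uniquely to $\tilde G$; this produces the natural identification $\widehat G = \widehat{\tilde G}$ by restriction (uniqueness of the extension uses density of $G$ in $\tilde G$).

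For the dual embedding, let $H$ be any subgroup of $G$ and set $K := \cl_{\tilde G}(H)$. Then $H$ is dense in $K$, which is a closed subgroup of the compact group $\tilde G$, so the same extension/restriction argument yields $\widehat H = \widehat K$. Now the classical Pontryagin Duality Theorem for compact abelian groups asserts that the restriction map $\widehat{\tilde G} \to \widehat K$ is onto: every continuous character of a closed subgroup of a compact abelian group extends to the ambient group. Composing these three steps, a continuous character of $H$ extends first to $K$, then to $\tilde G$, and finally restricts to a continuous character of $G$, so $H$ is dually embedded in $G$.

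For the dually closed statement, assume in addition that $H$ is closed in $G$ and fix $g \in G \SM H$. Since the closure of $H$ in $G$ is $\cl_{\tilde G}(H) \cap G = K \cap G$, we have $H = K \cap G$ and hence $g \notin K$. The quotient $\tilde G / K$ is a nontrivial compact abelian group, and by Pontryagin duality for compact abelian groups its characters separate points; pulling back along the quotient map yields $\tilde\chi \in \widehat{\tilde G}$ with $\tilde\chi(K)=\{1\}$ and $\tilde\chi(g)\neq 1$. The restriction $\chi := \tilde\chi{\restriction}_G$ lies in $H^\perp \subseteq \widehat G$ and satisfies $\chi(g) \neq 1$, proving that $H$ is dually closed.

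The only nontrivial inputs are the identification $\widehat G = \widehat{\tilde G}$ and the fact that closed subgroups of compact abelian groups are both dually embedded and dually closed, both of which are standard; there is no genuine obstacle, which is consistent with the assertion being described as folklore.
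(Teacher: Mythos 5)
Your proposal is correct and follows essentially the same route as the paper: pass to the compact completion, identify $\widehat{G}$ with $\widehat{\overline{G}}$ and $\widehat{H}$ with $\widehat{\overline{H}}$ by density, and invoke the standard fact (Corollary 24.12 of Hewitt--Ross in the paper) that closed subgroups of compact abelian groups are dually embedded and dually closed. You merely spell out the separation step for dual closedness that the paper leaves implicit.
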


\begin{proof}
Let $\overline{G}$ be a completion of $G$. Then $\overline{G}$ and the closure $\overline{H}$ of $H$ in $\overline{G}$ are compact abelian groups. By Corollary 24.12 of \cite{HR1}, $\overline{H}$ is a dually closed and dually embedded subgroup of $\overline{G}$. Since $\widehat{\overline{H}}=\widehat{H}$ and $\widehat{\overline{G}}=\widehat{G}$, it follows that $H$ is dually embedded in $G$.
If in addition $H$ is closed, then $H=\overline{H}\cap G$ and hence $H$ is dually closed in $G$.\qed
\end{proof}

Recall that a topological group $(G,\tau)$ is {\em minimal} if there is no coarser (Hausdorff) topological group topology for $G$. Clearly, every compact group is minimal. Below we give a nice characterization of dual closeness.
\begin{proposition} \label{p:minmal-dual}
Let $H$ be a subgroup of a $MAP$ abelian group $(G,\tau)$. Then:
\begin{enumerate}
\item[{\rm(i)}] $H$ is $\sigma(G,\widehat{G})$-closed if and only if $H$ is a dually closed subgroup of $G$ $($in particular, $H$ is a quasi-convex subset of $G$$)$.
\item[{\rm(ii)}]  If $H$ satisfies (i) and is in addition minimal  $($for example, $H$ is compact$)$, then $H$ is also dually embedded in $G$.
\end{enumerate}
\end{proposition}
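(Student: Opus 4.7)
The plan is to reduce both parts of Proposition \ref{p:minmal-dual} to Proposition \ref{p:precompact-dc-de} applied to the precompact Hausdorff group $(G,\sigma(G,\widehat{G}))$. That group is precompact because $G$ embeds into the compact group $\Ss^{\widehat{G}}$ by evaluation, is Hausdorff since $G$ is $MAP$, and (as a precompact abelian group) its group of continuous characters coincides with $\widehat{G}$.

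For part (i), the implication ``dually closed $\Rightarrow$ $\sigma(G,\widehat{G})$-closed'' is immediate: $H=\bigcap_{\chi\in H^\perp}\ker\chi$ is an intersection of $\sigma(G,\widehat{G})$-closed sets. For the converse, if $H$ is $\sigma(G,\widehat{G})$-closed then $H$ is a closed subgroup of the precompact group $(G,\sigma(G,\widehat{G}))$; Proposition \ref{p:precompact-dc-de} therefore yields that $H$ is dually closed in that group, and since the two groups share the same character group $\widehat{G}$, $H$ is dually closed in $(G,\tau)$ as well. To promote ``dually closed'' to ``quasi-convex,'' I take $g\in G\SM H$, pick $\chi\in H^\perp$ with $\chi(g)\neq 1$, and choose $n\in\NN$ with $\chi(g)^n\notin\Ss_+$ (any non-identity element of $\Ss$ has some positive power outside $\Ss_+$); the character $n\chi\in H^\perp\subseteq\widehat{G}$ then satisfies $(n\chi)(H)=\{1\}\subseteq\Ss_+$ and $(n\chi)(g)\notin\Ss_+$, witnessing the quasi-convexity of $H$ at $g$.

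For part (ii), I apply Proposition \ref{p:precompact-dc-de} once more to the closed subgroup $H$ of the precompact group $(G,\sigma(G,\widehat{G}))$ to conclude that $H$ is dually embedded in $(G,\sigma(G,\widehat{G}))$. Hence every character continuous on $(H,\sigma(G,\widehat{G})\uhr_H)=(H,\sigma(H,\widehat{G}\uhr_H))$ extends to an element of $\widehat{G}$. It remains to show that each $\chi_H\in\widehat{H}$ is already continuous in this weaker topology. Now $\sigma(H,\widehat{G}\uhr_H)$ is Hausdorff, since $\widehat{G}$ separates the points of $G\supseteq H$, and it is coarser than the original topology on $H$; the minimality of $H$ then forces the two topologies to coincide, and the required extension follows.

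The only step that genuinely uses the minimality hypothesis is the identification of $\sigma(H,\widehat{G}\uhr_H)$ with the original topology of $H$ in (ii); without it one could only extend characters continuous in the (possibly strictly coarser) precompact trace topology. The polar arithmetic in (i) is routine once one recalls that positive powers of any non-trivial element of $\Ss$ eventually leave $\Ss_+$, so I expect the topology identification via minimality to be the conceptual crux of the proof.
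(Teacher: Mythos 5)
Your proposal is correct and follows essentially the same route as the paper: both parts are reduced to Proposition \ref{p:precompact-dc-de} applied to the precompact group $\big(G,\sigma(G,\widehat{G})\big)$, with minimality used exactly as in the paper to identify $\tau{\restriction}_H$ with the weak trace topology. The only addition is your explicit verification of the parenthetical quasi-convexity claim in (i) (via the standard fact that every non-trivial element of $\Ss$ has a power outside $\Ss_+$), which the paper leaves implicit.
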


\begin{proof}
(i) Assume that $H$ is $\sigma(G,\widehat{G})$-closed. Since $\sigma(G,\widehat{G})$ is a precompact topology on $G$, Proposition \ref{p:precompact-dc-de} implies that $H$ is  dually closed in $G$. Conversely, if $H$ is a dually closed subgroup of $G$, then $H=\cap_{\chi\in H^\perp} \ker(\chi)$ and hence it is $\sigma(G,\widehat{G})$-closed.

(ii) Assume that $H$ is minimal. Then $(H,\tau{\restriction}_H)$ is topologically isomorphic to $(H,\sigma(G,\widehat{G}){\restriction}_H)$. Applying  Proposition \ref{p:precompact-dc-de} to the last group we obtain that $H$ is dually embedded in $G$. \qed
\end{proof}

\begin{remark} \label{rem:dually-embed}
Note that (i) of Proposition \ref{p:minmal-dual} means that the property of being a dually closed subgroup depends only on the duality $(G,\widehat{G})$: $H$ is dually closed in $(G,\tau)$ if and only if $H$ is dually closed in $(G,\nu)$ for any group topology $\nu$ on $G$ compatible with $\tau$ (because in any case $H$ is $\sigma(G,\widehat{G})$-closed). However, in general the property of being a dually embedded subgroup depends also on the topology $\tau$. Indeed, if $H$ is {\em not} a dually embedded subgroup of $(G,\tau)$ (see for example Remark 5.27 of \cite{Aus}), nevertheless,  by Proposition \ref{p:precompact-dc-de}, $H$ is a dually embedded subgroup of $\big(G,\sigma(G,\widehat{G})\big)$.\qed
\end{remark}
We complement Proposition \ref{p:minmal-dual} and Remark \ref{rem:dually-embed} by the next assertion.

\begin{proposition} \label{p:dually-embed}
Let $H$ be a  dually embedded subgroup of a $MAP$ abelian group $(G,\tau)$. Then $H$ is  a  dually embedded subgroup of  $(G,\nu)$ for any group topology $\nu$ on $G$ compatible with $\tau$ such that $\sigma(G,\widehat{G})\leq \nu\leq\tau$.
\end{proposition}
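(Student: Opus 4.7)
The plan is to observe that the statement is essentially a bookkeeping exercise: everything follows by tracking which pairs of topologies have the same dual group and using the inequality $\nu\leq\tau$ in one direction only. The condition $\sigma(G,\widehat{G})\leq\nu$ is there to guarantee that $\nu$ is genuinely a Hausdorff group topology on the $MAP$ group $G$, so that the notion of ``dually embedded in $(G,\nu)$'' makes sense; it will play no further role in the argument.

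First I would take an arbitrary $\chi\in\widehat{(H,\nu{\restriction}_H)}$. Because $\nu\leq\tau$ implies $\nu{\restriction}_H\leq\tau{\restriction}_H$, every $\nu{\restriction}_H$-continuous character of $H$ is automatically $\tau{\restriction}_H$-continuous, so $\chi\in\widehat{(H,\tau{\restriction}_H)}$. Next I would invoke the standing hypothesis that $H$ is dually embedded in $(G,\tau)$ to produce an extension $\widetilde{\chi}\in\widehat{(G,\tau)}$ with $\widetilde{\chi}{\restriction}_H=\chi$.

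Finally, since $\nu$ is compatible with $\tau$, by definition $\widehat{(G,\nu)}=\widehat{(G,\tau)}$, so the same homomorphism $\widetilde{\chi}$ lies in $\widehat{(G,\nu)}$. Thus $\widetilde{\chi}$ is a continuous character of $(G,\nu)$ extending $\chi$, and $H$ is dually embedded in $(G,\nu)$ as required.

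There is no genuine obstacle here; the only subtlety worth flagging is the asymmetry noted in Remark \ref{rem:dually-embed}: dual embeddedness genuinely depends on the topology, so one must take care that the extension produced by the hypothesis on $\tau$ survives the passage to $\nu$, and this survival is exactly what compatibility $\widehat{(G,\tau)}=\widehat{(G,\nu)}$ provides.
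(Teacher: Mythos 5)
Your proof is correct and follows the same route as the paper: restrict, use $\nu\leq\tau$ to see the character is $\tau{\restriction}_H$-continuous, extend via dual embeddedness in $(G,\tau)$, and check the extension is $\nu$-continuous. The only cosmetic difference is that the paper justifies the last step by $\sigma(G,\widehat{G})\leq\nu$ while you invoke compatibility $\widehat{(G,\nu)}=\widehat{(G,\tau)}$; these are interchangeable here.
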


\begin{proof}
Let $\chi\in \widehat{(H,\nu{\restriction}_H)}$. Since $\nu\leq\tau$ it follows that $\chi\in \widehat{(H,\tau{\restriction}_H)}$. Therefore there is $\eta\in \widehat{G}$ such that $\eta{\restriction}_H=\chi$. As $\sigma(G,\widehat{G})\leq \nu$ we obtain $\eta\in \widehat{(G,\nu)}$. Thus $\eta$ extends $\chi$ and hence $H$ is  dually embedded in $(G,\nu)$.\qed
\end{proof}

We shall use the following well known facts. 

\begin{proposition} \label{p:basis-weak}
Let $(G,\tau)$ be an abelian topological group.
\begin{enumerate}
\item[{\rm(i)}] The sets of the form $V=F^{\triangleleft}$, where $F\subseteq \widehat{G}$ is finite, form a base of the weak topology $\sigma(G,\widehat{G})$.
\item[{\rm(ii)}] An element $\chi\in G^\ast$ is continuous if and only if there is $U\in\Nn(G)$ such that $\chi\in U^{\triangleright}$.
\end{enumerate}
\end{proposition}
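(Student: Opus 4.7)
The plan is to prove both clauses by unwinding the definitions and invoking the polar calculus of Lemma \ref{l:polar-1}, in particular clause (i) of that lemma, which links $\big((n)A\big)^{\triangleleft}$ with the set $\Ss_n$.

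For (i), I would first note that each $F^{\triangleleft}$ with $F=\{\chi_1,\dots,\chi_k\}\subseteq\widehat{G}$ finite equals $\bigcap_{i=1}^{k}\chi_i^{-1}(\Ss_+)$, and hence is a $\sigma(G,\widehat{G})$-neighborhood of zero; moreover the family $\{F^{\triangleleft}:F\subseteq\widehat{G}\text{ finite}\}$ is closed under finite intersections because $(F_1\cup F_2)^{\triangleleft}=F_1^{\triangleleft}\cap F_2^{\triangleleft}$. To show this family is a base at zero, let $W=\bigcap_{i=1}^{k}\chi_i^{-1}(V)$ be an arbitrary basic $\sigma(G,\widehat{G})$-neighborhood of zero, where $V$ is an open neighborhood of $1\in\Ss$. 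Pick $n\in\NN$ with $\Ss_n\subseteq V$ and set $A:=\{0,\chi_1,\dots,\chi_k\}$. Then $F:=(n)A$ is a finite subset of $\widehat{G}$ and, by Lemma \ref{l:polar-1}(i) applied to $A$, the polar $F^{\triangleleft}=\big((n)A\big)^{\triangleleft}$ equals $\{g\in G:\chi_i(g)\in\Ss_n,\; i=1,\dots,k\}\subseteq W$, as required.

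For (ii), the ``only if'' direction is immediate: a continuous $\chi\in G^{\ast}$ makes $U:=\chi^{-1}(\Ss_+)=\{\chi\}^{\triangleleft}$ a member of $\Nn(G)$, so $\chi\in U^{\triangleright}$. For the converse, suppose $\chi\in U^{\triangleright}$ for some $U\in\Nn(G)$. To check continuity of $\chi$ at zero, let $W$ be an arbitrary neighborhood of $1\in\Ss$ and pick $n\in\NN$ with $\Ss_n\subseteq W$. By joint continuity of the group operation, choose $V\in\Nn(G)$ with $(n)V\subseteq U$. Then for every $g\in V$ the iterates $g,2g,\dots,ng$ all lie in $(n)V\subseteq U\subseteq\{\chi\}^{\triangleleft}$, so $\chi(g)^{i}=\chi(ig)\in\Ss_+$ for $i=1,\dots,n$, which (exactly as in the proof of Lemma \ref{l:polar-1}(i)) forces $\chi(g)\in\Ss_n\subseteq W$. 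Hence $V\subseteq\chi^{-1}(W)$, proving continuity of $\chi$.

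I do not expect a serious obstacle. Both parts are bookkeeping around the single ``shrinking'' mechanism that turns a coarse polar condition $\chi(U)\subseteq\Ss_+$ into an arbitrarily fine one $\chi(V)\subseteq\Ss_n$ by replacing $U$ with a smaller neighborhood $V$ such that $(n)V\subseteq U$; the finite-polar base in (i) is the dual manifestation of the same trick, obtained by replacing $F$ with $(n)F\cup\{0\}$.
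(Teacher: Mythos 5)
Your proof is correct. The paper itself does not argue this proposition directly---it merely cites Proposition 3.4 of Au{\ss}enhofer's dissertation \cite{Aus} (applied to $\Gsa$ for clause (i))---and your argument supplies exactly the standard details, using the same $(n)A$ versus $\Ss_n$ shrinking mechanism that the paper isolates in Lemma \ref{l:polar-1}(i); the only point worth making explicit is that $ig\in(n)V$ for $i\le n$ because $0\in V$, and that continuity of the character at zero suffices since it is a homomorphism.
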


\begin{proof}
(i) follows from Propositions 3.4(i) of \cite{Aus} applying to the group $\Gsa$, and (ii) is  Proposition 3.4(ii) of \cite{Aus}.\qed 
\end{proof}
Since, by  \cite[Theorem~7.11]{Aus}, $F^{\triangleleft\triangleright}$ is finite for every finite $F\subseteq \widehat{G}$, we can always assume that $F$ is quasi-convex and $V=F^{\triangleleft}$ is a quasi-convex weak neighborhood of $0\in G$ (in this case $F=V^{\triangleright}$).

Let $G$ and $H$ be abelian topological groups, and let $p:G\to H$ be a  continuous homomorphism. The {\em adjoint map} $p^\ast: \widehat{H}\to \widehat{G}$ is defined by $p^\ast(\chi)(g):=\big(\chi,p(g)\big)$ for all $\chi\in \widehat{H}$ and $g\in G$. If $F\subseteq G$ and $\e>0$, set
\[
[F;\e]:=\{ \chi\in \widehat{G}: |(\chi,g)-1|<\e \; \mbox{ for every }\; g\in F\}.
\]

\begin{lemma} \label{l:adjoint}
Let $(G,\tau)$ and $(H,\nu)$ be $MAP$ abelian topological groups, $p:H\to G$ be a continuous homomorphism, and let $p^\ast: \widehat{G}_{\sigma^\ast} \to \widehat{H}_{\sigma^\ast} $ be the adjoint map of $p$. Then:
\begin{enumerate}
\item[{\rm(i)}] $p^\ast$ is continuous.
\item[{\rm(ii)}] If $p$ is surjective, then $p^\ast$ is an embedding.
\item[{\rm(iii)}] Let $p$ be a surjective map, and let the topology $\tau$ and the quotient topology $\nu/\ker(p)$ be compatible (for example, $p$ is a quotient map). Then $p^\ast$ is an embedding and $p^\ast(\widehat{G})$ is a dually embedded and dually closed subgroup of $\widehat{H}_{\sigma^\ast}$. Moreover, if $A\subseteq \widehat{G}_{\sigma^\ast}$ is quasi-convex, then so is $p^\ast(A)\subseteq \widehat{H}_{\sigma^\ast}$.
\end{enumerate}
\end{lemma}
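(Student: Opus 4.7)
The plan is to handle the three parts in sequence, using that $\sigma^\ast$-topologies are initial topologies with respect to evaluations and that the polars $F^\triangleleft$ (for finite $F\subseteq G$, resp.\ $H$) form a neighbourhood base at $0$ of $\widehat{G}_{\sigma^\ast}$, resp.\ $\widehat{H}_{\sigma^\ast}$, by Proposition \ref{p:basis-weak}(i). The key identity behind (i) is
\[
(p^\ast)^{-1}(E^\triangleleft)=\{\chi\in\widehat{G}:\chi(p(h))\in\Ss_+\text{ for every }h\in E\}=p(E)^\triangleleft,
\]
valid for every finite $E\subseteq H$. Since $p(E)$ is a finite subset of $G$, preimages of basic $\sigma^\ast$-neighbourhoods of $0$ in $\widehat{H}$ are $\sigma^\ast$-neighbourhoods of $0$ in $\widehat{G}$, giving continuity of $p^\ast$.

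For (ii), surjectivity of $p$ immediately forces injectivity of $p^\ast$: if $\chi\circ p\equiv 1$ then $\chi\equiv 1$ on $p(H)=G$. Combining injectivity with the identity above yields $p^\ast(p(E)^\triangleleft)=p^\ast(\widehat{G})\cap E^\triangleleft$. Since $p$ is surjective, every finite subset $F\subseteq G$ has the form $F=p(E)$, so each basic $\sigma^\ast$-neighbourhood of $0$ in $\widehat{G}$ is mapped onto a relative $\sigma^\ast$-neighbourhood of $0$ in the subgroup $p^\ast(\widehat{G})$, which makes $p^\ast$ a homeomorphism onto its image.

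The crucial step in (iii) is to identify $p^\ast(\widehat{G})$ with the annihilator $\ker(p)^\perp$ in $\widehat{H}$; this is precisely where the compatibility of $\tau$ with $\nu/\ker(p)$ enters. The inclusion $p^\ast(\widehat{G})\subseteq\ker(p)^\perp$ is immediate from $p(\ker(p))=\{0\}$. For the reverse inclusion, any $\eta\in\widehat{H}$ vanishing on $\ker(p)$ descends to a character $\bar\eta$ of $H/\ker(p)$ continuous in the quotient topology $\nu/\ker(p)$; compatibility with $\tau$ then lifts $\bar\eta$, viewed via the bijection $H/\ker(p)\to G$ induced by $p$, to an element of $\widehat{G}=\widehat{(G,\tau)}$ whose image under $p^\ast$ is $\eta$. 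Now $\ker(p)^\perp=\bigcap_{h\in\ker(p)}\{\eta:\eta(h)=1\}$ is $\sigma(\widehat{H},H)$-closed, hence dually closed in the $MAP$ group $\widehat{H}_{\sigma^\ast}$ by Proposition \ref{p:minmal-dual}(i). Since $\widehat{H}_{\sigma^\ast}$ embeds into the compact group $\Ss^H$, it is precompact, and Proposition \ref{p:precompact-dc-de} then gives that every subgroup of $\widehat{H}_{\sigma^\ast}$, in particular $p^\ast(\widehat{G})$, is dually embedded.

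For the quasi-convexity claim, given $\eta\in\widehat{H}\setminus p^\ast(A)$ I split into two cases. If $\eta\notin p^\ast(\widehat{G})=\ker(p)^\perp$, choose $h\in\ker(p)$ with $\eta(h)\neq 1$ and replace $h$ by a sufficiently large integer multiple $nh$ so that $\eta(nh)\notin\Ss_+$; every $p^\ast(\chi)$ still sends $nh$ to $1\in\Ss_+$, so $nh$ is the desired separator. If $\eta=p^\ast(\chi)$ with $\chi\notin A$, quasi-convexity of $A$ supplies $g\in G$ with $\chi(g)\notin\Ss_+$ and $\chi'(g)\in\Ss_+$ for all $\chi'\in A$; lifting $g$ via surjectivity of $p$ to some $h\in H$ yields the required separator for $\eta$ and $p^\ast(A)$. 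The most delicate point is the correct use of the compatibility hypothesis in identifying $p^\ast(\widehat{G})$ with $\ker(p)^\perp$; everything else reduces to the definitions and to routine polar manipulations collected in Lemma \ref{l:polar-1}.
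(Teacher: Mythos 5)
Your proof is correct and follows essentially the same route as the paper's: continuity and relative openness of $p^\ast$ via the behaviour of basic polar neighbourhoods under $p^\ast$, precompactness of $\widehat{H}_{\sigma^\ast}$ plus Proposition \ref{p:precompact-dc-de} for dual embeddedness, and the identical two-case separation argument for quasi-convexity. The only organizational difference is in (iii), where you identify $p^\ast(\widehat{G})$ outright with the annihilator $\ker(p)^\perp$ (so that closedness, hence dual closedness, is automatic), whereas the paper reaches the same conclusion by a net argument showing the image is $\sigma(\widehat{H},H)$-closed; both versions use the compatibility hypothesis in exactly the same way, namely to lift a character of $H$ vanishing on $\ker(p)$ to an element of $\widehat{G}$.
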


\begin{proof}
(i) Let $F$ be a finite subset of $H$, and let $\e>0$. Then for every $\eta\in [p(F);\e]$ and each $h\in F$, we obtain
$
\big| (p^\ast(\eta),h)-1\big| =|(\eta,p(h))-1|<\e
$
and hence
$
p^\ast\big( [p(F);\e]\big)\subseteq [F;\e].
$
Thus $p^\ast$ is continuous.

(ii) It is clear that $p^\ast$ is injective. Therefore, by (i), to show that $p^\ast$ is an embedding it suffices to prove that $p^\ast$ is relatively open. To this end, let $\widetilde{F}$ be a finite subset of $G$ and $\e>0$. Choose a finite subset $F$ of $H$ such that $p(F)=\widetilde{F}$. Then, for every $\chi=p^\ast(\eta)$ where $\eta\in \widehat{G}$ and each $h\in F$, we have
\[
|(\chi,h)-1|=\big| (p^\ast(\eta),h)-1\big| =|(\eta,p(h))-1|.
\]
Therefore $[F;\e]\cap p^\ast(\widehat{G}) \subseteq p^\ast\big([\widetilde{F};\e]\big)$, and hence $p^\ast$ is relatively open.
%
%

(iii) Taking into account Proposition \ref{p:precompact-dc-de} and (ii), it is sufficient to show that the image $p^\ast(\widehat{G})$ is closed in $\widehat{H}_{\sigma^\ast}$. Let $\{\chi_i\}_{i\in I}\subseteq \widehat{G}$ be such that $p^\ast(\chi_i)\to \chi$ in $\widehat{H}_{\sigma^\ast}$. For every $h\in \ker(p)$, we have $\big(p^\ast(\chi_i),h\big)=(\chi_i,p(h))=1$. Therefore $\chi(\ker(p))=\{1\}$ and hence $\chi$ induces a continuous  character $\widetilde{\chi}$ of the quotient group $H/\ker(p)$ by the rule $(\widetilde{\chi}, h+\ker(p))=(\chi,h)$. Since $\tau$ and $\nu/\ker(p)$ are compatible, $\widetilde{\chi}\in \widehat{G}$. As
\[
\big(p^\ast(\widetilde{\chi}),h\big)=(\widetilde{\chi},p(h))=(\widetilde{\chi}, h+\ker(p))=(\chi,h) \quad (h\in H)
\]
we obtain $\chi=p^\ast(\widetilde{\chi})\in p^\ast(G)$, as desired.

To prove the last assertion, assume that $A\subseteq \widehat{G}_{\sigma^\ast}$ is quasi-convex. Let $\eta\in \widehat{H}\SM p^\ast(A)$. If $\eta\not\in p^\ast(\widehat{G})$, then the dual closeness of $p^\ast(\widehat{G})$ implies that there is $h\in \big(p^\ast(\widehat{G})\big)^\top$ such that $(\eta,h)\not=1$. Then there is $n\in\NN$ such that $(\eta,h)^n=(n\eta,h)\not\in\Ss_+$. It remains to note that $n\eta\in \big(p^\ast(\widehat{G})\big)^\top$. Assume now that $\eta\in p^\ast(\widehat{G})$, so $\eta=p^\ast(\chi)$ for some $\chi\in \widehat{G}$. Since $A$ is quasi-convex, there is $g\in A^\triangleleft$ such that $(\chi,g)\not\in\Ss_+$. Then $h:=p(g)$ satisfies  $(\eta,h)=(\chi,g)\not\in\Ss_+$ and, for every $\xi=p^\ast(\zeta)\in p^\ast(A)$  with $\zeta\in A$, we have $(\xi,h)=(\zeta,g)\in\Ss_+$, as desired. Thus $p^\ast(A)$ is quasi-convex.\qed
\end{proof}

For an lcs $E$, the value of $\chi\in E'$ on $x\in E$ is denoted by $\langle\chi,x\rangle$ or $\chi(x)$. For a subset $B$ of $E$, the set
\[
B^\circ:=\{\chi\in E': |\langle \chi,x\rangle|\leq 1 \; \mbox{ for all }\; x\in B\}
\]
is called the {\em polar} of $B$. Clearly, $B^\circ$ is arc-connected. If $B$ is closed and absolutely convex, then, by the Bipolar theorem, $B={}^\circ(B^{\circ})$. For every $a\in \IR$, we set $aB:=\{ax: x\in B\}$. Define
\[
\psi: E' \to \widehat{E},  \; \psi(\chi):= e^{2\pi i \chi},  \quad \big( \mbox{i.e. } \psi(\chi)(x):= \exp\{2\pi i \langle\chi,x\rangle\} \mbox{ for } x\in E\big).
\]

A proof of the next important result can be found in \cite[Proposition~2.3]{Ban}.
\begin{fact} \label{f:dual-E}
Let $E$ be an lcs and let $\psi: E' \to \widehat{E},  \psi(\chi):= e^{2\pi i \chi}$. Then:
\begin{enumerate}
\item[{\rm (i)}] $\psi$ is an algebraic isomorphism;
\item[{\rm (ii)}] $\psi$ is a topological isomorphism of $(E', \tau_k)$ onto $E^\wedge$.
\end{enumerate}
\end{fact}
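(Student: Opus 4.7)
For part (i), the homomorphism property is immediate from the identity $e^{2\pi i(\chi_1+\chi_2)(x)} = e^{2\pi i\chi_1(x)}\cdot e^{2\pi i\chi_2(x)}$. Injectivity follows because $\psi(\chi)=1$ forces $\chi(E)\subseteq\ZZ$; since $\chi$ is $\RR$-linear, $\chi(\lambda x)=\lambda\chi(x)\in\ZZ$ for every $\lambda\in\RR$, which forces $\chi=0$. For surjectivity, given $\eta\in\widehat{E}$, I would exploit the $\RR$-contractibility of $E$ via the homotopy $H(x,s):=sx$ to define $\tilde\eta\colon E\to\RR$ as the endpoint at $s=1$ of the unique continuous lift of the path $s\mapsto\eta(sx)$ starting at $0$ through the covering map $q(t):=e^{2\pi i t}\colon\RR\to\Ss$. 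Additivity of $\tilde\eta$ then follows from uniqueness of lifts applied to the two continuous maps $(x,y)\mapsto\tilde\eta(x+y)$ and $(x,y)\mapsto\tilde\eta(x)+\tilde\eta(y)$, both of which lift $(x,y)\mapsto\eta(x+y)=\eta(x)\eta(y)$ and agree at $(0,0)$. Additivity plus continuity yields $\Q$-linearity, which continuity upgrades to $\RR$-linearity; hence $\tilde\eta\in E'$ with $\psi(\tilde\eta)=\eta$.

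For part (ii), I would compare explicit neighborhood bases at zero: the sets $N(K,\delta):=\{\chi\in E':\sup_{x\in K}|\chi(x)|<\delta\}$ (with $K\subseteq E$ compact and $\delta>0$) for $(E',\tau_k)$, and the sets $[F;\e]$ (with $F\subseteq E$ compact and $\e>0$) for $E^\wedge$. Continuity of $\psi$ is routine: pick $\delta>0$ so small that $|e^{2\pi i t}-1|<\e$ whenever $|t|<\delta$, so $\psi(N(F,\delta))\subseteq[F;\e]$.

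Continuity of $\psi^{-1}$ is the main obstacle, because the bound $|e^{2\pi i\chi(x)}-1|<\e$ only controls $\chi(x)\bmod\ZZ$, not $|\chi(x)|$ itself. The device I would use is to replace $K$ by its \emph{balanced hull} $K':=\{tx:t\in[-1,1],\ x\in K\}$, which is compact as a continuous image of $[-1,1]\times K$. For $\e$ sufficiently small, the arc $A_\e:=\{z\in\Ss:|z-1|<\e\}$ is a contractible neighborhood of $1$ on which $q$ admits a continuous local inverse taking values in an interval $(-\eta(\e),\eta(\e))$ with $\eta(\e)\to 0$ as $\e\to 0$. If $\psi(\chi)\in[K';\e]$, then for each fixed $x\in K$ the continuous path $t\mapsto e^{2\pi i t\chi(x)}$, $t\in[-1,1]$, lies entirely in $A_\e$; lifting through $q$ forces $t\chi(x)\in(-\eta(\e),\eta(\e))$ for every $t\in[-1,1]$, and specializing to $t=1$ bounds $|\chi(x)|$ by $\eta(\e)$ uniformly in $x\in K$. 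Choosing $\e$ so that $\eta(\e)<\delta$ yields $\psi^{-1}([K';\e])\subseteq N(K,\delta)$. The balanced-hull trick combined with this local inverse of $q$ is the crux of the argument; the remainder is a routine covering-space manipulation.
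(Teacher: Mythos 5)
Your proposal is correct, but note that the paper does not prove this Fact at all: it is quoted with a reference to Banaszczyk \cite[Proposition~2.3]{Ban}, so there is no in-paper argument to match. Your proof is essentially the standard one underlying that citation. Banaszczyk's route to surjectivity is his Lemma~2.2 (a character sending a star-shaped neighborhood of zero into $\left[-\tfrac14,\tfrac14\right]+\ZZ$ lifts uniquely to a linear functional), which is exactly your covering-space lift through $q(t)=e^{2\pi i t}$ localized to a radial neighborhood; the same star-shapedness device reappears in your balanced-hull trick for the continuity of $\psi^{-1}$, and indeed the present paper invokes that very Lemma~2.2 of \cite{Ban} later, in the proof of Proposition \ref{p:lcs-Mac(E)}. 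Two small points you should make explicit in a full write-up: (1) before applying uniqueness of lifts on $E\times E$ to get additivity, you need $\tilde\eta$ to be continuous; this follows from the homotopy lifting property applied to $H(x,s)=\eta(sx)$ on $E\times[0,1]$ (which you gesture at via contractibility, but the continuity of the slice-endpoint map is the point that needs stating), and (2) the arc $A_\e$ must be taken with $\e$ small enough that it is a proper arc of $\Ss$, so that $q^{-1}(A_\e)$ is a disjoint union of intervals and the connectedness argument applies. Neither is a genuine gap.
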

We shall say that $\psi$ is the {\em canonical isomorphism } of $E'$ onto $\widehat{E}$.


\begin{lemma} \label{l:polar-neigh}
Let $B$ be a closed and absolutely convex subset of an lcs $E$, and let $K:=B^\circ$. Then,  for every $n\in\NN$, we have
\[
\begin{aligned}
\tfrac{1}{4n}B & =\big(\psi(nK)\big)^{\triangleleft},\;\;\; B^{\triangleright}=\psi(\tfrac{1}{4}B^\circ)=\psi(\tfrac{1}{4}K), \\  \big(\tfrac{1}{4n}B\big)^{\triangleright} & =\psi(nK)=n\psi(K)=(n)\psi(K), \; \mbox{ and }\; \psi\big(\tfrac{1}{4n} B^\circ\big) = \big((n)B\big)^\triangleright.
\end{aligned}
\]
In particular, $B$ and $\psi(K)$ are quasi-convex.
\end{lemma}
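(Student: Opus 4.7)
The plan is to reduce everything to the basic observation that for $\chi\in E'$ and $x\in E$ one has $\psi(\chi)(x)\in\Ss_+$ if and only if $\cos(2\pi\langle\chi,x\rangle)\geq 0$, i.e.\ $\langle\chi,x\rangle\in [-\tfrac14,\tfrac14]+\ZZ$. Moreover, if the linear path $t\mapsto 2\pi t\langle\chi,x\rangle$ ($t\in[0,1]$) lies entirely in $\bigcup_{k\in\ZZ}[-\tfrac{\pi}{2}+2\pi k,\tfrac{\pi}{2}+2\pi k]$, then by connectedness and by starting at $0$ it must stay inside $[-\tfrac{\pi}{2},\tfrac{\pi}{2}]$; equivalently, $|\langle\chi,x\rangle|\leq\tfrac14$. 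This is the only nontrivial ingredient and I will use it repeatedly.

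First I prove $\tfrac{1}{4n}B=\bigl(\psi(nK)\bigr)^{\triangleleft}$. The inclusion ``$\subseteq$'' is immediate: if $x\in\tfrac{1}{4n}B$ and $\chi\in K$, then $|\langle n\chi,x\rangle|\leq\tfrac14$, so $\psi(n\chi)(x)\in\Ss_+$. For ``$\supseteq$'', fix $x\in(\psi(nK))^{\triangleleft}$ and $\chi\in K$. Since $K=B^\circ$ is absolutely convex, $tn\chi\in nK$ for every $t\in[0,1]$, whence $e^{2\pi itn\langle\chi,x\rangle}\in\Ss_+$ for all such $t$. The connectedness argument forces $|n\langle\chi,x\rangle|\leq \tfrac14$, so $|\langle\chi,4nx\rangle|\leq 1$ for every $\chi\in B^\circ$; by the Bipolar theorem, $4nx\in B$. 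Next I prove $B^{\triangleright}=\psi(\tfrac14K)$. The inclusion ``$\supseteq$'' is trivial, and for ``$\subseteq$'' Fact~\ref{f:dual-E} writes any $\eta\in B^{\triangleright}$ as $\psi(\chi)$; absolute convexity of $B$ then supplies the path $tx\in B$ for $t\in[-1,1]$, and the same connectedness argument yields $|\langle\chi,x\rangle|\leq\tfrac14$ for every $x\in B$, i.e.\ $4\chi\in K$.

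The remaining identities follow formally. The set $\tfrac{1}{4n}B$ is closed and absolutely convex with classical polar $4nK$, so applying the formula $B^{\triangleright}=\psi(\tfrac14 B^\circ)$ to $\tfrac{1}{4n}B$ in place of $B$ gives $(\tfrac{1}{4n}B)^{\triangleright}=\psi(\tfrac14\cdot 4nK)=\psi(nK)$. Since $\psi$ is a group homomorphism, $\psi(n\chi)=n\psi(\chi)$, so $\psi(nK)=n\psi(K)$; and because $K$ is absolutely convex, $(n)K=nK$, giving also $(n)\psi(K)=\psi(nK)$. Applying the same formula to $nB$ (whose classical polar is $\tfrac{1}{n}K$) yields $\bigl((n)B\bigr)^{\triangleright}=(nB)^{\triangleright}=\psi(\tfrac{1}{4n}K)$.

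Finally, quasi-convexity of $B$ and $\psi(K)$ drops out by combining the identities already proved. On one hand, $\tfrac14K=(4B)^\circ$, so the first identity applied to $4B$ (with $n=1$) gives $(\psi(\tfrac14K))^{\triangleleft}=\tfrac14\cdot 4B=B$, whence $B^{\triangleright\triangleleft}=(\psi(\tfrac14K))^{\triangleleft}=B$. On the other, $\psi(K)^{\triangleleft}=\tfrac14B$ (first identity, $n=1$) and $(\tfrac14B)^{\triangleright}=\psi(K)$ (third identity, $n=1$), so $\psi(K)^{\triangleleft\triangleright}=\psi(K)$. The main obstacle is the first reverse inclusion of Step~1: ruling out the ``wrong'' intervals $[k-\tfrac14,k+\tfrac14]$ with $k\neq 0$ requires the absolute convexity of $K$ to produce the whole segment $\{tn\chi:t\in[0,1]\}$ in $nK$, and the connectedness argument cannot be bypassed.
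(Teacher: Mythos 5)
Your proof is correct and follows essentially the same route as the paper: the key step in both is that $\psi(\chi)(x)\in\Ss_+$ forces $\langle\chi,x\rangle\in[-\tfrac14,\tfrac14]+\ZZ$, and a connectedness argument along the segment supplied by absolute convexity pins the value to the central interval $[-\tfrac14,\tfrac14]$, after which the Bipolar theorem finishes the first two identities. Your derivation of the remaining identities and of the quasi-convexity claim by formal substitution into the first two is a mild streamlining of the paper's ``proved analogously,'' not a different method.
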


\begin{proof}
Since
$
\big(\psi(nK)\big)^{\triangleleft}=\left\{ x\in E: \exp\{ 2\pi i \langle n\chi,x\rangle\} \in \Ss_+ \mbox{ for all } \chi\in K\right\},
$
it follows that $x\in \big(\psi(nK)\big)^{\triangleleft}$ if and only if
\[
\langle n\chi,x\rangle\in \left[-\tfrac{1}{4},\tfrac{1}{4}\right] +\ZZ, \quad \mbox{ for all }\; \chi\in K.
\]
But since $K$ is arc-connected and contains zero, this is possible if and only if $\langle4n\chi,x\rangle\in \left[-1,1\right]$ for all $\chi\in K$, i.e. $x\in \tfrac{1}{4n}B$.

As $B^{\triangleright}=\left\{ \chi\in \widehat{E}: \exp\{ 2\pi i \langle \psi^{-1}(\chi),x\rangle\} \in \Ss_+ \mbox{ for all } x\in B\right\}$, we obtain that $\chi\in B^{\triangleright}$ if and only if
\[
\langle \psi^{-1}(\chi),x\rangle\in \left[-\tfrac{1}{4},\tfrac{1}{4}\right] +\ZZ, \quad \mbox{ for all }\; x\in B,
\]
and since $B$ is arc-connected and contains zero, $\chi\in B^{\triangleright}$ if and only if $\langle\psi^{-1}(\chi),4x\rangle\in \left[-1,1\right]$ for all $x\in B$, i.e. $\chi\in \psi(\tfrac{1}{4}B^\circ)$. Thus $B^{\triangleright}=\psi(\tfrac{1}{4}B^\circ)=\psi(\tfrac{1}{4}K)$.

The third equality $\big(\tfrac{1}{4n}B\big)^{\triangleright} =\psi(nK)$ can be proved analogously, and the equalities $\psi(nK)=n\psi(K)=(n)\psi(K)$ hold because $\psi$ is an isomorphism and  $nK=(n)K$.

To prove the last equality, let $\chi\in \tfrac{1}{4n} B^\circ$. Then for every $x_1,\dots,x_n\in B$, we obtain $|\langle\chi,x_k\rangle|\leq \tfrac{1}{4n}$ and hence
\[
\big(\psi(\chi),x_1+\cdots+x_n\big)=\exp\Big\{2\pi i\sum_{k=1}^n \langle\chi,x_k\rangle\Big\}\in\Ss_+.
\]
Thus $\psi(\chi)\in \big((n)B\big)^\triangleright$. Conversely, if $\psi(\chi)\in \big((n)B\big)^\triangleright$, then for every $k=1,\dots,n$, and each $x\in B$ we have
\[
(\psi(\chi),kx)=(\psi(\chi),x)^k =\exp\{2\pi ik \langle\chi,x\rangle\} \in \Ss_+
\]
and hence $\langle\chi,x\rangle\in [-\tfrac{1}{4n},\tfrac{1}{4n}]+\ZZ$ for every $x\in B$. But since $B$ is arc-connected and contains zero, we obtain $\langle\chi,x\rangle\in [-\tfrac{1}{4n},\tfrac{1}{4n}]$ for every $x\in B$, and hence $\chi\in \tfrac{1}{4n}B^\circ$.\qed
\end{proof}


\section{Main results}  \label{sec:compatible-first}


The main result of this section gives the first description of all compatible group topologies on an lqc abelian group, see Theorem \ref{t:compatible-description}. Using this description we characterize those lqc abelian groups which admits a Mackey group topology (Theorem \ref{t:Mackey-exist}) or are Mackey groups (Theorem  \ref{t:Mackey-group-equi}). This description  is based on the construction of {\em polar} lqc group topologies  described in the next lemma.

\begin{lemma} \label{l:topology-G-dual}
Let $(G,\tau)$ be a $MAP$ abelian group, and let $K$ be a quasi-convex subset of $\Gsa$.
\begin{enumerate}
\item[{\rm(i)}] The family  $\U_K^\tau$ of subsets of $G$ of the form
\[
\big( (n)K \big)^{\triangleleft} \cap U, \; \mbox{ where }\; n\in\NN \; \mbox{ and }\; U\in \Nn_{s}(G,\tau),
\]
defines a base at zero of a $($Hausdorff$)$ group topology $\TTT_K^\tau$ on $G$ which is finer than $\tau$. If additionally $G$ is locally quasi-convex, then the sets
\[
\big( (n)K \big)^{\triangleleft} \cap U, \; \mbox{ where }\; n\in\NN \; \mbox{ and }\; U\in \Nn_{qc}(G,\tau),
\]
define a base at zero of an lqc  group topology $\TTT_K^\tau$ on $G$ which is finer than $\tau$.
\item[{\rm(ii)}] $\TTT^\tau_K =\tau$ if and only if $K=U^{\triangleright}$ for some $\tau$-neighborhood $U$ of zero.
\item[{\rm(iii)}] The family  $\U_K$ of subsets of $G$ of the form
\[
\big((n)K+F\big)^{\triangleleft}, \; \mbox{ where }\; n\in\NN \; \mbox{ and }\; 0\in F\subseteq \widehat{G} \mbox{ is finite},
\]
defines  a base at zero of the lqc group topology $\TTT_K:=\TTT_K^{\,\sigma(G,\widehat{G})}$ on $G$. Consequently,
\[
\sigma(G,\widehat{G})\leq \TTT_K \leq \TTT_K^\tau.
\]
\item[{\rm(iv)}] $\sigma(G,\widehat{G})=\vee \{\TTT_F: 0\in F\subseteq \widehat{G} \mbox{ is finite and quasi-convex}\}$.

\item[{\rm(v)}] Set $H:=K^\top$, and let $p:G\to G/H$ be the quotient map and $\widetilde{K}:=\big(p^\ast\big)^{-1}(K)\subseteq \widehat{G/H}$. Then:

 (a) $K\subseteq p^\ast(\widehat{G/H})$, $\widetilde{K}$ is quasi-convex, and $\widetilde{K}$ is compact if and only if so is $K$;

 (b) the topology $\nu:=\TTT_{\widetilde{K}}^{\,\sigma(G/H,\widehat{G/H})}$ on $G/H$ is weaker than the quotient topology $\TTT_K/H$.
\item[{\rm(vi)}]  In the notations of {\rm(v)}, the countable family  $\V_{\widetilde{K}}=\big\{ \big((n)\widetilde{K}\big)^{\triangleleft}: n\in\NN\big\}$ of subsets of $G/H$ defines an lqc metrizable 
    group topology $\nu_{\widetilde{K}}$ on $G/H$ such that $\nu_{\widetilde{K}}\leq \nu$.
\end{enumerate}
\end{lemma}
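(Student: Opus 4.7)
The plan is to work through the six clauses in order, reducing every step to the polar calculus of Lemma \ref{l:polar-1} together with the adjoint identities in Lemma \ref{l:adjoint}. Throughout I would use that quasi-convex sets always contain $0$ (so $(n)K\subseteq(m)K$ for $n\leq m$) and that polars $(\cdot)^\triangleleft$ are closed, symmetric, and quasi-convex by construction.

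For (i), I would verify the standard neighborhood-base axioms: symmetry and intersection are routine, while the triangle axiom $V+V\subseteq U$ reduces via Lemma \ref{l:polar-1}(ii) applied with $(n)K$ in place of $K$ to $((2n)K)^\triangleleft+((2n)K)^\triangleleft\subseteq((n)K)^\triangleleft$. Hausdorffness is inherited from $\tau$, and the lqc strengthening holds because each base element is an intersection of quasi-convex sets. For (ii), the direction ($\Leftarrow$) uses Lemma \ref{l:polar-1}(vii)--(viii) to express $((n)K)^\triangleleft=(K^\triangleleft)_{(n)}$ as a $\tau$-neighborhood, while the converse forces $K^\triangleleft$ to be a $\tau$-neighborhood and yields $K=K^{\triangleleft\triangleright}$ from the quasi-convexity of $K$. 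For (iii) I would match the two bases $\{((n)K+F)^\triangleleft\}$ and $\{((n)K)^\triangleleft\cap F^\triangleleft\}$ (the latter arising from (i) with $\tau=\sigma(G,\widehat{G})$ via Proposition \ref{p:basis-weak}(i)): one inclusion is Lemma \ref{l:polar-1}(iii), and the reverse is the same lemma applied to $(n)K$ and $F$, giving $((2n)K)^\triangleleft\cap((2)F)^\triangleleft\subseteq((n)K+F)^\triangleleft$. For (iv), each $\TTT_F$-base element with $F,F'$ finite is a finite polar, hence $\TTT_F\leq\sigma(G,\widehat{G})$; conversely every $\sigma$-base $E^\triangleleft$ equals $F^\triangleleft$ for $F=E^{\triangleleft\triangleright}$, which is finite and quasi-convex by the remark after Proposition \ref{p:basis-weak}.

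For (v), the inclusion $K\subseteq p^\ast(\widehat{G/H})$ holds because every $\chi\in K$ annihilates $H=K^\top$ and so factors through $p$. For the quasi-convexity of $\widetilde K$, given $\widetilde\eta\notin\widetilde K$ set $\eta=p^\ast(\widetilde\eta)\notin K$, find $g\in K^\triangleleft$ with $\eta(g)\notin\Ss_+$ using the quasi-convexity of $K$, and observe via the defining identity $\widetilde\chi(p(g))=p^\ast(\widetilde\chi)(g)$ that $p(g)\in G/H$ separates $\widetilde\eta$ from $\widetilde K$. The compactness equivalence follows from Lemma \ref{l:adjoint}(ii), which makes $p^\ast$ a weak-$\ast$ embedding and hence identifies $\widetilde K=(p^\ast)^{-1}(K)$ with $K$ homeomorphically. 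For (b) I would verify $p\big(((n)K+p^\ast(\widetilde F))^\triangleleft\big)\subseteq((n)\widetilde K+\widetilde F)^\triangleleft$ by the same adjoint identity; this says each $\nu$-base neighborhood contains the $p$-image of a $\TTT_K$-base neighborhood, hence $\nu\leq\TTT_K/H$.

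The main obstacle lies in (vi). The family $\V_{\widetilde K}$ is plainly countable, symmetric, quasi-convex, and totally ordered by reverse inclusion, and it satisfies the triangle axiom via Lemma \ref{l:polar-1}(ii), so it defines an lqc group topology \emph{provided} it is Hausdorff. To establish Hausdorffness I would compute $\bigcap_n((n)\widetilde K)^\triangleleft=\widetilde K^\top$: by Lemma \ref{l:polar-1}(i), the $n$th polar equals $\{g\in G/H:\widetilde\chi(g)\in\Ss_n\text{ for every }\widetilde\chi\in\widetilde K\}$, and $\bigcap_n\Ss_n=\{1\}$. Then $\widetilde K^\top=\{0\}$ in $G/H$: by the adjoint identity and the bijection $p^\ast\colon\widetilde K\to K$, $p(g)\in\widetilde K^\top$ iff $\chi(g)=1$ for all $\chi\in K$, iff $g\in K^\top=H$, iff $p(g)=0$. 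Metrizability then follows from the Birkhoff--Kakutani theorem. Finally, $\nu_{\widetilde K}\leq\nu$ is automatic because $((n)\widetilde K)^\triangleleft=((n)\widetilde K+\{0\})^\triangleleft$ is itself a $\nu$-base element (take $\widetilde F=\{0\}$).
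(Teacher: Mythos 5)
Your proposal is correct and follows essentially the same route as the paper: verifying the Bourbaki neighborhood-base axioms through the polar calculus of Lemma \ref{l:polar-1}, matching the two bases in (iii) via clause (iii) of that lemma, and using the adjoint map $p^\ast$ for (v) and (vi). The only deviations are cosmetic --- in (ii) you invoke Lemma \ref{l:polar-1}(vii) where the paper uses (iv), and in (vi) you prove Hausdorffness by computing $\bigcap_n\big((n)\widetilde K\big)^{\triangleleft}=\widetilde K^\top=\{0\}$ rather than exhibiting a separating neighborhood for each nonzero $h$, which amounts to the same argument.
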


\begin{proof}
(i) By \cite[Prop.~1,\S 1.2, Chapter~III]{Bourbaki}, to show that the family $\U_K^\tau$ defines a group topology on $G$  it is sufficient to check that  $\U_K^\tau$ satisfies the following conditions:
\begin{enumerate}
\item[(a)] $0\in W$ for every $W\in \U_K^\tau$;
\item[(b)] for every $V\in \U_K^\tau$ there is $W\in \U_K^\tau$ such that  $-W\subseteq V$;
\item[(c)] for every $V\in \U_K^\tau$ there is $W\in \U_K^\tau$ such that  $W+W\subseteq V$;
\item[(d)] for every $V,W\in \U_K^\tau$ there is $\mathcal{O}\in \U_K^\tau$ such that  $\mathcal{O}\subseteq V\cap W$.
\end{enumerate}
The conditions (a) and (b) hold because, by definition, all elements of $\U_K^\tau$ contain $0$ and are symmetric. To check (d), let $V=\big( (n_1)K \big)^{\triangleleft} \cap U_1$ and $W=\big( (n_2)K \big)^{\triangleleft} \cap U_2$. Then the set $\mathcal{O}:= \big( (n_1+n_2)K \big)^{\triangleleft} \cap (U_1\cap U_2)\in \U_K^\tau$ trivially satisfies  $\mathcal{O}\subseteq V\cap W$.

Now we check (c). Let $V=\big( (n)K \big)^{\triangleleft} \cap U$, where $n\in\NN$ and $U\in \Nn_{s}(G)$. Choose $U_0\in \Nn_{s}(G)$ such that $U_0+U_0\subseteq U$, and set $W:=\big( (2n)K \big)^{\triangleleft} \cap U_0$. Then, by (ii) of Lemma \ref{l:polar-1} applied to $(n)K$ instead of $K$, we obtain $W+W\subseteq V$ as desired.

By construction $\tau\leq \TTT^\tau_K$, and hence $\TTT^\tau_K$ is Hausdorff.

Assume that $G$ is locally quasi-convex. Since the inverse polars are quasi-convex, and the intersection of any family of quasi-convex sets is also quasi-convex, the topology $\TTT^\tau_K$ is locally quasi-convex.

(ii) Assume that $\tau= \TTT^\tau_K$. Then the set $U:=K^{\triangleleft}$ must be also a $\tau$-neighborhood of zero, and hence $K=K^{\triangleleft\triangleright}=U^{\triangleright}$. Conversely, assume that $K=U^{\triangleright}$ for some  $\tau$-neighborhood $U$ of zero and let $n\in\NN$. We can assume that $U=K^{\triangleleft}$. Choose $V\in \Nn_{s}(G,\tau)$ such that $(n)V\subseteq U$. Then, by (iv) of Lemma \ref{l:polar-1}, we have $V\subseteq \big((n)K\big)^{\triangleleft}$. Thus $\TTT^\tau_K\leq \tau$ and hence $\tau= \TTT^\tau_K$.
\smallskip

(iii) follows from Lemma \ref{l:polar-1}(iii), Proposition  \ref{p:basis-weak}(i) and (i). The inclusions  $\sigma(G,\widehat{G})\leq \TTT_K \leq \TTT_K^\tau$ immediately follow from (i) and the fact $\sigma(G,\widehat{G})\leq\tau$. 

(iv) The inclusion $\sigma(G,\widehat{G})\leq \vee \TTT_F$ follows from the inclusion $\sigma(G,\widehat{G})\leq \TTT_F$, see (iii). To prove the inverse inclusion, let
\[
U=\big( (n_1)\widetilde{F}_1 +F_1\big)^{\triangleleft}\cap\cdots\cap \big( (n_k)\widetilde{F}_k +F_k\big)^{\triangleleft}
\]
be a standard neighborhood at zero in $\vee\TTT_F$. Set $F:=\bigcup_{i=1}^k \big( (n_i)\widetilde{F}_i +F_i\big)$. Then $F$ is finite, $0\in F$ and $F^{\triangleleft} \subseteq U$. Thus, by Proposition  \ref{p:basis-weak}(i), $\vee\TTT_F\leq \sigma(G,\widehat{G})$.

(v) By Lemma \ref{l:adjoint}, the adjoint map $p^\ast: (\widehat{G/H})_{\sigma^\ast} \to \Gsa$ is an embedding. To show  that $K\subseteq p^\ast(\widehat{G/H})$, let $\chi\in K$. Then $H\subseteq\ker(\chi)$ and hence $\chi$ induces a continuous character $\widetilde{\chi}$ of $G/H$ such that $\widetilde{\chi}\circ p=\chi$, i.e., $\chi=p^\ast(\widetilde{\chi})\in p^\ast(\widehat{G/H})$ as desired. Consequently, $K$ is compact if and only if so is $\widetilde{K}$. Also it is clear that $p^\ast(\widehat{G/H})=H^\perp =K^{\top\perp}$ (so $p^\ast$ has closed image, see also (iii) of Lemma \ref{l:adjoint}). To prove that $\widetilde{K}$ is quasi-convex, let $\chi\in\widehat{G/H}\SM \widetilde{K}$ and hence $p^\ast(\chi)\in\widehat{G}\SM K$. Since $K$ is quasi-convex, there is $g_0\in K^{\triangleleft}$ such that $(p^\ast(\chi),g_0)=(\chi,p(g_0)) \not\in \Ss_+$. Observe also that for every $\eta\in \widetilde{K}$, we have $(\eta,p(g_0))=(p^\ast(\eta),g_0)\in \Ss_+$. Therefore $p(g_0)\in \widetilde{K}^{\triangleleft}$ and $(\chi,p(g_0)) \not\in \Ss_+$. Thus $\widetilde{K}$ is quasi-convex and the clause (a) is proved.

To show that $\nu \leq\TTT_K/H $, fix arbitrarily an $n\in\NN$ and a finite $0\in \widetilde{F}\subseteq \widehat{G/H}$. Set $F:=p^\ast(\widetilde{F})$. We show that $p\big(\big((n)K+F\big)^{\triangleleft}\big)\subseteq \big((n)\widetilde{K}+\widetilde{F}\big)^{\triangleleft}$. Let $g\in \big((n)K+F\big)^{\triangleleft}$, $\eta_1,\dots,\eta_n\in \widetilde{K}$ and $\widetilde{f}\in\widetilde{F}$ be arbitrary. Then
\[
\big(\eta_1+\dots+\eta_n+\widetilde{f},p(g)\big)=\big(p^\ast(\eta_1)+\dots+p^\ast(\eta_n)+p^\ast(\widetilde{f}),g\big)\in\Ss_+
\]
and hence $p(g)\in \big((n)\widetilde{K}+\widetilde{F}\big)^{\triangleleft}$ as desired. Thus $\nu \leq\TTT_K/H $.

(vi) Analogously to (i), one can show that $\nu_{\widetilde{K}}$ is an lqc group topology. Clearly, $\nu_{\widetilde{K}}\leq \nu$. It remains to prove that $\nu_{\widetilde{K}}$ is Hausdorff. Let $0\not= h\in G/H$. Choose $g\in G\SM H$ such that $p(g)=h$. As $H=K^\top$ there is $\chi\in K$ such that $(\chi,g)\not= 1$. So there is an $n\in\NN$ such that $(n\chi,g)\not\in \Ss_+$. Then, for $\widetilde{\chi}:=\big(p^\ast\big)^{-1}(\chi)\in \widetilde{K}$, we have $\big(n\widetilde{\chi},p(g)\big)=(n\chi,g)\not\in \Ss_+$. But this means that $h=p(g)\not\in \big((n)\widetilde{K}\big)^{\triangleleft}$. Thus $\nu_{\widetilde{K}}$ is Hausdorff.\qed
\end{proof}

Since the topologies of the form $\TTT_K$ will play a crucial role for our description of compatible lqc topologies on lqc groups, it will be important to compare two such polar topologies.

\begin{lemma} \label{l:topology-G-dual-weak}
Let $(G,\tau)$ be a $MAP$ abelian group, and let $K$ and $K'$ be quasi-convex subsets of $\Gsa$.  Then:
\begin{enumerate}
\item[{\rm(i)}] $\TTT_{K'} \leq \TTT_K$ if and only if there are $n\in\NN$ and a finite $0\in F\subseteq \widehat{G}$ such that $K'\subseteq \qc_{\widehat{G}}\big((n)K+F\big)$. Consequently, $\TTT_{K'} =\TTT_K$  if and only if there are $n\in\NN$ and a finite $0\in F\subseteq \widehat{G}$ such that
    \[
    K'\subseteq \qc_{\widehat{G}}\big((n)K+F\big)\;\; \mbox{ and } \;\; K\subseteq \qc_{\widehat{G}}\big((n)K'+F\big).
    \]
\item[{\rm(ii)}] $\TTT_{\qc_{\widehat{G}}\big((n)K+F\big)} =\TTT_K$ for all $n\in\NN$ and a finite $0\in F\subseteq \widehat{G}$; in particular, $\TTT_{\qc_{\widehat{G}}\big((n)K\big)}=\TTT_K$.
\end{enumerate}
\end{lemma}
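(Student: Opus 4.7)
The plan is to use two ingredients already established in the paper: the explicit neighborhood base for $\TTT_K$ from Lemma~\ref{l:topology-G-dual}(iii), and the polar identities collected in Lemma~\ref{l:polar-1}. I will prove (i) in both directions first, and then deduce (ii) by a short direct verification of (i). The ``Consequently'' clause of (i) will be obtained by running the equivalence in each direction to get pairs $(n_1,F_1),(n_2,F_2)$ and then merging to the common pair $n:=\max\{n_1,n_2\}$, $F:=F_1\cup F_2$, using $0\in K$, $0\in K'$, $0\in F_i$ to ensure $(n_i)K+F_i\subseteq(n)K+F$ and symmetrically, so that the quasi-convex hulls are also nested.

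For the forward direction of (i), I would assume $\TTT_{K'}\leq\TTT_K$. The set $(K')^\triangleleft$ is the basic $\TTT_{K'}$-neighborhood of $0$ corresponding to $m=1$ and $F'=\{0\}$, so by hypothesis it is a $\TTT_K$-neighborhood of $0$; then Lemma~\ref{l:topology-G-dual}(iii) furnishes $n\in\NN$ and a finite $0\in F\subseteq\widehat{G}$ with $\big((n)K+F\big)^\triangleleft\subseteq(K')^\triangleleft$. Taking $\triangleright$ and using quasi-convexity of $K'$ (so $K'=(K')^{\triangleleft\triangleright}$) gives $K'\subseteq\big((n)K+F\big)^{\triangleleft\triangleright}=\qc_{\widehat{G}}\big((n)K+F\big)$.

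For the reverse direction, I would start from $K'\subseteq\qc_{\widehat{G}}\big((n)K+F\big)$ and take $\triangleleft$. The identity $\qc_{\widehat{G}}(A)^\triangleleft=A^\triangleleft$, valid because $A^\triangleleft$ is quasi-convex and hence its own bipolar, then yields $\big((n)K+F\big)^\triangleleft\subseteq(K')^\triangleleft$, so $(K')^\triangleleft$ is already a $\TTT_K$-neighborhood of $0$. The main obstacle is to upgrade this single fact to show that every basic $\TTT_{K'}$-neighborhood $\big((m)K'+F'\big)^\triangleleft$ belongs to $\TTT_K$. My plan is a two-step split using Lemma~\ref{l:polar-1}: part (iii), applied with $(m)K'$ in place of $K$ and $F'$ in place of $F$, gives $\big((2m)K'\big)^\triangleleft\cap\big((2)F'\big)^\triangleleft\subseteq\big((m)K'+F'\big)^\triangleleft$, so it suffices to check each intersectand separately. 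The factor $\big((2)F'\big)^\triangleleft$ is a basic $\sigma(G,\widehat{G})$-neighborhood of $0$, hence a $\TTT_K$-neighborhood because $\sigma(G,\widehat{G})\leq\TTT_K$ by Lemma~\ref{l:topology-G-dual}(iii). For the other factor, Lemma~\ref{l:polar-1}(vii) identifies $\big((2m)K'\big)^\triangleleft=\big((K')^\triangleleft\big)_{(2m)}$, and this is a $\TTT_K$-neighborhood of $0$ by the general observation recorded in Section~\ref{sec:-polar} that $U_{(k)}$ is a neighborhood of $0$ whenever $U$ is (apply that observation to an open $\TTT_K$-neighborhood $V\subseteq(K')^\triangleleft$). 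This gives $\TTT_{K'}\leq\TTT_K$ and completes (i).

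For (ii), I would set $K'':=\qc_{\widehat{G}}\big((n)K+F\big)$ and verify both inclusions of (i) for the pair $(K,K'')$. The inclusion $K''\subseteq\qc_{\widehat{G}}\big((n)K+F\big)$ is an equality by definition. For the converse, $0\in K$ and $0\in F$ give $K\subseteq(n)K+F\subseteq K''$, and since $K''$ is itself quasi-convex one has $K''=\qc_{\widehat{G}}(K'')=\qc_{\widehat{G}}\big((1)K''+\{0\}\big)$, so $K\subseteq\qc_{\widehat{G}}\big((1)K''+\{0\}\big)$. Both conditions of (i) hold, hence $\TTT_{K''}=\TTT_K$, and the particular case $F=\{0\}$ gives the ``in particular'' assertion.
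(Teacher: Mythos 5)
Your proof is correct and follows essentially the same route as the paper: necessity by taking bipolars of $(K')^{\triangleleft}\supseteq\big((n)K+F\big)^{\triangleleft}$, sufficiency by comparing the basic neighborhoods via the polar calculus of Lemma~\ref{l:polar-1}, and (ii) as a direct corollary of (i). The only cosmetic difference is in the sufficiency step, where the paper scales the quasi-convex hull in one stroke via the identity $\big((m)\qc_{\widehat{G}}(A)\big)^{\triangleleft}=\big((m)A\big)^{\triangleleft}$ of Lemma~\ref{l:polar-1}(vi), whereas you split $\big((m)K'+F'\big)^{\triangleleft}$ using parts (iii) and (vii) together with the observation that $U_{(k)}$ is a neighborhood whenever $U$ is; both are valid.
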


\begin{proof}
(i) Assume that  $\TTT_{K'} \leq \TTT_K$. Since ${K'}^{\triangleleft}$ is a $\TTT_{K'}$-neighborhood of zero, there are $n\in\NN$ and a finite $0\in F\subseteq \widehat{G}$ such that $\big((n)K+F\big)^{\triangleleft} \subseteq {K'}^{\triangleleft}$. Thus $K'=(K')^{\triangleleft\triangleright} \subseteq \big((n)K+F\big)^{\triangleleft\triangleright}= \qc_{\widehat{G}}\big((n)K+F\big)$ as desired.

To prove the sufficiency we recall (see (i) and (iii) of Lemma \ref{l:topology-G-dual}) that standard neighborhoods of zero in $\TTT_{K'}$ have the form $\big((m)K'\big)^{\triangleleft}\cap U$, where $U$ is a $\sigma(G,\widehat{G})$-neighborhood of zero and $m\in\NN$. Now, by (vi) of Lemma \ref{l:polar-1} applied to $A=(n)K+F$, we obtain
\[
\big((mn)K+(m)F\big)^{\triangleleft} =\Big((m)\big((n)K+F\big)\Big)^{\triangleleft} =\Big((m)\qc_{\widehat{G}}\big((n)K+F\big)\Big)^{\triangleleft} \subseteq \big((m)K'\big)^{\triangleleft}.
\]
Therefore $\big((m)K'\big)^{\triangleleft}\cap U$ is a $\TTT_K$-neighborhood of zero. Thus $\TTT_{K'} \leq \TTT_K$.

(ii) immediately follows from (i).\qed
\end{proof}

Considering the polar topologies of the form $\TTT_K$ by modulo the Mackey problem for an lqc abelian group $(G,\tau)$, it is natural to characterize those quasi-convex subsets $K$ of $\Gsa$ for which the topology $\TTT_K$ is compatible with the original topology $\tau$ on $G$. In Theorem \ref{t:Mackey-compatible-dual} below we show that $\TTT_K$ is compatible with $\tau$ if and only if $K$ belongs to the  following class of quasi-convex subsets of $\widehat{G}$.
\begin{definition} \label{def:Mac(G)}
For a $MAP$ abelian group $(G,\tau)$, denote by $\Mac(G)$ the family of all quasi-convex compact subsets $K$ of $\Gsa$ such that for every finite subset $0\in F\subseteq \widehat{G}$ and each $n\in\NN$, the subset $\big( (n)K +F\big)^{\triangleleft\blacktriangleright}$ of the group $G^\ast$ is contained in $\widehat{G}$.\qed
\end{definition}

Since the group $\Gsa$ and the operation of taking polars depend only on the duality $(G,\widehat{G})$ we obtain
\begin{proposition} \label{p:Mac-duality}
For a $MAP$ abelian group $G$, the family $\Mac(G)$ depends only on the duality $(G,\widehat{G})$.
\end{proposition}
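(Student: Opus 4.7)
The plan is to verify that every ingredient appearing in Definition \ref{def:Mac(G)} depends only on the dual pair $(G,\widehat{G})$ and the canonical pairing $(\chi,g)\mapsto\chi(g)$, rather than on the particular topology $\tau$ witnessing that $G$ is $MAP$. Since two compatible topologies on $G$ share the same dual group, this immediately yields the proposition.

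First, I would note that the weak$^\ast$ topology $\sigma(\widehat{G},G)$ on $\widehat{G}$ is defined purely via the evaluation maps $\chi\mapsto\chi(g)$, $g\in G$; hence the topological group $\Gsa$ depends only on the duality. In particular, compactness of a subset $K\subseteq\Gsa$ is a duality-theoretic notion. Similarly, the polar operations $\triangleright$ (from $G$ to $\widehat{G}$) and $\triangleleft$ (from $\widehat{G}$ to $G$), as well as the polar $\blacktriangleright:\mathcal{P}(G)\to\mathcal{P}(G^\ast)$, are defined solely in terms of the bilinear pairing and the fixed subset $\Ss_+\subseteq\Ss$; no topology on $G$ enters. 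The set $G^\ast=\widehat{G_d}$ depends only on $G$ as an abstract group, and the ambient set $\widehat{G}$ in which we test the inclusion is fixed by the duality.

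The one step requiring a brief argument is that quasi-convexity of $K\subseteq\Gsa$ is also a duality-theoretic notion. By definition, it involves the continuous characters of $\Gsa$, so I would invoke the standard identification $\widehat{\Gsa}\cong G$ via evaluation: since $\sigma(\widehat{G},G)$ is the weakest group topology on $\widehat{G}$ making every $g\in G$ continuous and $G$ separates the points of $\widehat{G}$, every continuous character of $\Gsa$ is given by evaluation at some $g\in G$ (this is precisely the content underlying Proposition \ref{p:basis-weak}(i) applied to $\Gsa$). Therefore quasi-convexity of $K$ in $\Gsa$ unfolds as: for every $\chi\in\widehat{G}\SM K$ there exists $g\in G$ with $(\chi,g)\notin\Ss_+$ and $(K,g)\subseteq\Ss_+$, which is visibly a condition on the duality alone.

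Gathering these observations, every clause in Definition \ref{def:Mac(G)}---``quasi-convex'', ``compact'', the formation of $(n)K+F$, the polars $\triangleleft$ and $\blacktriangleright$, and the inclusion $((n)K+F)^{\triangleleft\blacktriangleright}\subseteq\widehat{G}$---is expressible purely in terms of $(G,\widehat{G})$ and the pairing. Consequently, if $\tau_1,\tau_2$ are $MAP$ group topologies on $G$ with $\widehat{(G,\tau_1)}=\widehat{(G,\tau_2)}$, then $\Mac(G,\tau_1)=\Mac(G,\tau_2)$. The only potentially delicate point is the identification $\widehat{\Gsa}=G$, but this is classical and poses no real obstacle.
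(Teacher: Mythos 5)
Your proof is correct and is essentially an expanded version of the paper's argument: the paper disposes of this proposition with the single observation that $\Gsa$ and the polar operations depend only on the duality $(G,\widehat{G})$, which is exactly the content you verify clause by clause (including the identification $\widehat{\Gsa}\cong G$ needed to see that quasi-convexity in $\Gsa$ is a duality-theoretic notion).
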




The next theorem shows the topology $\TTT_K$ is compatible with $\tau$ exactly when $K\in\Mac(G)$.
\begin{theorem} \label{t:Mackey-compatible-dual}
Let $(G,\tau)$ be a $MAP$ abelian group, and let $K$ be a quasi-convex subset of $\Gsa$. Then the topology $\TTT_K$ is compatible with $\tau$ if and only if $K\in \Mac(G)$.
\end{theorem}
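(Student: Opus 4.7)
The plan is to express the continuous dual of $(G,\TTT_K)$ directly in terms of the sets appearing in the definition of $\Mac(G)$, so that both implications reduce to unwinding the definitions. The inclusion $\sigma(G,\widehat{G})\leq \TTT_K$ from Lemma \ref{l:topology-G-dual}(iii) already gives $\widehat{G}\subseteq \widehat{(G,\TTT_K)}$, so the compatibility of $\TTT_K$ with $\tau$ is equivalent to the reverse inclusion $\widehat{(G,\TTT_K)}\subseteq \widehat{G}$.

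The key identification, combining the explicit base $\U_K$ for $\TTT_K$ from Lemma \ref{l:topology-G-dual}(iii) with the standard continuity criterion for characters, is
$$
\widehat{(G,\TTT_K)}\;=\;\bigcup_{n\in\NN,\, F}\bigl((n)K+F\bigr)^{\triangleleft\blacktriangleright},
$$
where $F$ ranges over finite subsets of $\widehat{G}$ containing $0$. Indeed, a character $\chi\in G^\ast$ is $\TTT_K$-continuous precisely when it sends some basic $\TTT_K$-neighborhood of zero, of the form $\bigl((n)K+F\bigr)^{\triangleleft}$, into $\Ss_+$, and this condition is exactly $\chi\in \bigl((n)K+F\bigr)^{\triangleleft\blacktriangleright}$.

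Sufficiency is then immediate: if $K\in \Mac(G)$, each set in the union above lies in $\widehat{G}$ by the very definition of $\Mac(G)$, so $\widehat{(G,\TTT_K)}\subseteq \widehat{G}$ and $\TTT_K$ is compatible with $\tau$. For necessity, suppose $\TTT_K$ is compatible. The polar condition defining $\Mac(G)$ is immediate from the displayed identification, since then each $\bigl((n)K+F\bigr)^{\triangleleft\blacktriangleright}\subseteq \widehat{(G,\TTT_K)}=\widehat{G}$. It remains to verify that $K$ is $\sigma(\widehat{G},G)$-compact. For this I would take $n=1$ and $F=\{0\}$ to see that $K^{\triangleleft}$ is a $\TTT_K$-neighborhood of zero, and then apply Proposition \ref{p:polar-U-k-compact} to the topological group $(G,\TTT_K)$: the polar of $K^{\triangleleft}$ computed inside $\widehat{(G,\TTT_K)}$, which by compatibility coincides with $\widehat{G}$ equipped with $\sigma(\widehat{G},G)$, is $\sigma(\widehat{G},G)$-compact. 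Since $K$ is quasi-convex this polar is exactly $K^{\triangleleft\triangleright}=K$, so $K$ is compact in $\Gsa$, and hence $K\in\Mac(G)$.

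There is no real obstacle here; the proof is essentially a dictionary between the two formulations. The only points requiring minor care are distinguishing the $\triangleright$-polar (in $\widehat{G}$) from the $\blacktriangleright$-polar (in $G^\ast$) in the chain of equalities above, and invoking Proposition \ref{p:polar-U-k-compact} with respect to the topology $\TTT_K$ rather than $\tau$ when extracting the compactness of $K$.
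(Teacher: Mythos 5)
Your proof is correct and follows essentially the same route as the paper's: both directions reduce to the continuity criterion of Proposition \ref{p:basis-weak}(ii) applied to the basic $\TTT_K$-neighborhoods $\bigl((n)K+F\bigr)^{\triangleleft}$, and the compactness of $K$ is extracted exactly as in the paper by applying Proposition \ref{p:polar-U-k-compact} to $(G,\TTT_K)$ and using $K=K^{\triangleleft\triangleright}$. Your packaging of the argument as the single identity $\widehat{(G,\TTT_K)}=\bigcup_{n,F}\bigl((n)K+F\bigr)^{\triangleleft\blacktriangleright}$ is just a tidier way of stating what the paper proves inline.
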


\begin{proof}
Assume that $\TTT_K$ is compatible with $\tau$, i.e. $\widehat{(G,\TTT_K)}=\widehat{G}$. First we show that $K$ is $\sigma(\widehat{G},G)$-compact. To this end, we recall that $K^{\triangleleft}$ is a neighborhood of zero in $\TTT_K$. Therefore, by Proposition \ref{p:polar-U-k-compact}, $K=K^{\triangleleft\triangleright}$ is a compact subset in $\sigma\big(\widehat{(G,\TTT_K)},G\big)=\sigma(\widehat{G},G)$. Thus $K$ is $\sigma(\widehat{G},G)$-compact. Now, fix an $n\in\NN$ and a finite subset $0\in F$ of $\widehat{G}$. We claim that every $\eta\in \big( (n)K +F\big)^{\triangleleft\blacktriangleright}$ belongs to $\widehat{G}$. Indeed, since $U:=\big( (n)K +F\big)^{\triangleleft}\in \TTT_K$ and $\eta(U)\subseteq \Ss_+$, Proposition \ref{p:basis-weak}(ii) implies that  $\eta$ is $\TTT_K$-continuous and hence  $\eta\in \widehat{(G,\TTT_K)}=\widehat{G}$. Since $\eta$ was arbitrary, we obtain  $\big( (n)K +F\big)^{\triangleleft\blacktriangleright}\subseteq \widehat{G}$. Thus $K\in \Mac(G)$ as desired.

Conversely, assume that $K\in \Mac(G)$ and let $\chi\in \widehat{(G,\TTT_K)}$. Then there are $n\in\NN$ and a finite  $0\in F\subseteq \widehat{G}$ such that $\chi\left( \big((n)K+F\big)^{\triangleleft}\right) \subseteq \Ss_+$. Hence $\chi\in \big((n)K+F\big)^{\triangleleft\blacktriangleright}$, and therefore $\chi\in \widehat{G}$. We proved that $\widehat{(G,\TTT_K)}\subseteq \widehat{G}$. Since $\sigma(G,\widehat{G})\leq \TTT_K$ we also have $\widehat{G}\subseteq \widehat{(G,\TTT_K)}$.  Thus $\widehat{G}= \widehat{(G,\TTT_K)}$ and hence $\TTT_K$ is compatible with $\tau$.\qed
\end{proof}


 Theorem \ref{t:Mackey-compatible-dual} implies that the family $\Mac(G)$ has some nice (hereditary) properties.
\begin{proposition} \label{p:Mac-her}
Let $(G,\tau)$ be a $MAP$ abelian group, and let $K$ be a quasi-convex subset of $\Gsa$. Then:
\begin{enumerate}
\item[{\rm(i)}] If $K\in\Mac(G)$, then $\qc_{\widehat{G}}(K_0)\in \Mac(G)$ for every subset $K_0$ of $K$.
\item[{\rm(ii)}] $K\in\Mac(G)$ if and only if $\qc_{\widehat{G}}\big( (n)K+F\big)\in\Mac(G)$ for all $n\in\NN$ and all finite $0\in F\subseteq \widehat{G}$.
\item[{\rm(iii)}] If $\TTT$ is a group topology on $G$ compatible with $\tau$, then $\{U^{\triangleright}: U\in\Nn(G,\TTT)\}\subseteq \Mac(G)$.
\item[{\rm(iv)}] If $A\subseteq \widehat{G}$ is equicontinuous $($for example, finite$)$, then $\qc_{\widehat{G}}(A)\in\Mac(G)$.
\item[{\rm(v)}] If $Y$ is a subgroup of $\widehat{G}$ and $Y\in\Mac(G)$, then $Y$ is weak${}^\ast$ compact, $Y^\top$ is $\TTT_Y$-open and $\widehat{G/Y^\top}=(G/Y^\top)^\ast$.
\item[{\rm(vi)}] Let $H$ be a quotient $MAP$ group of $G$, $p:G\to H$ be the quotient map, and $K\in\Mac(G)$. Set $\widetilde{K}:=K\cap p^\ast(\widehat{H})$ and $C:=(p^\ast)^{-1}(\widetilde{K})$. Then $C\in\Mac(H)$.
\end{enumerate}
\end{proposition}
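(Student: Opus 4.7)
The whole proposition reduces, via Theorem \ref{t:Mackey-compatible-dual}, to statements about compatibility of polar topologies $\TTT_K$, and Lemma \ref{l:topology-G-dual}(iii) supplies the basic monotonicity I will use repeatedly: if $\TTT_1\leq\TTT_2$ are group topologies on $G$ with $\sigma(G,\widehat G)\leq\TTT_1$ and $\widehat{(G,\TTT_2)}=\widehat G$, then automatically $\widehat{(G,\TTT_1)}=\widehat G$, so compatibility with $\tau$ passes from $\TTT_2$ to $\TTT_1$.

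Parts (i)--(iv) come by combining this principle with the comparison Lemma \ref{l:topology-G-dual-weak}. For (i), the inclusion $K_0\subseteq K$ forces $\qc_{\widehat G}(K_0)\subseteq \qc_{\widehat G}(K)=K$ (as $K$ is quasi-convex), so $\qc(K_0)$ is weak${}^\ast$-closed inside the weak${}^\ast$-compact $K$ and hence weak${}^\ast$-compact; Lemma \ref{l:topology-G-dual-weak}(i) then gives $\TTT_{\qc(K_0)}\leq\TTT_K$, and the principle above yields compatibility. Part (ii) is immediate in the forward direction from the equality $\TTT_{\qc_{\widehat G}((n)K+F)}=\TTT_K$ of Lemma \ref{l:topology-G-dual-weak}(ii); the reverse direction uses (i) together with the elementary inclusion $K\subseteq(n)K+F\subseteq\qc_{\widehat G}((n)K+F)$. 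For (iii), weak${}^\ast$-compactness of $U^\triangleright$ follows from Proposition \ref{p:polar-U-k-compact} applied to $(G,\TTT)$, and the key step is the inequality $\TTT_{U^\triangleright}\leq\TTT$, which comes from the inclusion
\[
\big((n)U^\triangleright+F\big)^{\triangleleft}\supseteq \big((2n)U^\triangleright\big)^{\triangleleft}\cap (F+F)^{\triangleleft} = (U^{\triangleright\triangleleft})_{(2n)}\cap(F+F)^{\triangleleft}
\]
(Lemma \ref{l:polar-1}(iii) and (vii)): the first factor contains $U_{(2n)}$ and is a $\TTT$-neighborhood of zero, while the second is a $\sigma(G,\widehat G)$-neighborhood and $\sigma(G,\widehat G)\leq\TTT$ by compatibility. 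Then (iv) is (iii) applied to $\TTT=\tau$ and $U:=A^\triangleleft$, which is a $\tau$-neighborhood by Proposition \ref{p:Mackey-equi}, noting $U^\triangleright=\qc_{\widehat G}(A)$.

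For (v), when $Y$ is a subgroup of $\widehat G$, Lemma \ref{l:polar-1}(i) gives $Y^\triangleleft=Y^\top$: if $g\in Y^\triangleleft$ then $(\chi,g)^n=(n\chi,g)\in\Ss_+$ for every $\chi\in Y$ and every $n\in\NN$, forcing $(\chi,g)=1$. Thus $Y^\top$ is itself a basic $\TTT_Y$-neighborhood of zero (take $n=1$ and $F=\{0\}$), and being a subgroup it is $\TTT_Y$-open. Consequently $G/Y^\top$ endowed with the quotient of $\TTT_Y$ is discrete, so $\widehat{G/Y^\top}=(G/Y^\top)^\ast$ automatically; weak${}^\ast$-compactness of $Y$ is built into the definition of $\Mac(G)$.

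Part (vi) is the most delicate. I would first verify the structural properties of $C$: since $p^\ast(\widehat H)$ is a dually closed subgroup of $\widehat G_{\sigma^\ast}$ by Lemma \ref{l:adjoint}(iii), it is weak${}^\ast$-closed and quasi-convex in $\widehat G$ by Proposition \ref{p:minmal-dual}(i), so $\widetilde K=K\cap p^\ast(\widehat H)$ is weak${}^\ast$-compact and quasi-convex in $\widehat G$. A direct separation argument, transferring a separating $g\in\widetilde K^\triangleleft$ to $h:=p(g)\in H$ via the identity $(\eta,p(g))=(p^\ast(\eta),g)$, shows that $C=(p^\ast)^{-1}(\widetilde K)$ is quasi-convex in $\widehat H$, and $C$ is weak${}^\ast$-compact because $p^\ast$ is a weak${}^\ast$-embedding by Lemma \ref{l:adjoint}(ii),(iii). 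The heart of (vi) is the $\Mac$-condition: given $\eta\in\big((n)C+\widetilde F\big)^{\triangleleft_H\blacktriangleright_H}\subseteq H^\ast$, set $\chi:=\eta\circ p\in G^\ast$ and $F:=p^\ast(\widetilde F)\subseteq\widehat G$; the same duality identity yields $p\big(((n)K+F)^{\triangleleft}\big)\subseteq\big((n)C+\widetilde F\big)^{\triangleleft_H}$, hence $\chi\in\big((n)K+F\big)^{\triangleleft\blacktriangleright}$, so $K\in\Mac(G)$ gives $\chi\in\widehat G$, and since $p$ is a quotient map this forces $\eta\in\widehat H$. The main obstacle throughout is the mismatch inside Definition \ref{def:Mac(G)} between the polar $\blacktriangleright$, which a priori only lands in $G^\ast$, and the required membership in $\widehat G$; invoking Theorem \ref{t:Mackey-compatible-dual} is precisely the device that converts the set-theoretic polar condition into a statement about continuity of characters.
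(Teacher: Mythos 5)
Your proposal is correct and follows essentially the same route as the paper: everything is funneled through Theorem \ref{t:Mackey-compatible-dual} together with Lemmas \ref{l:topology-G-dual}, \ref{l:topology-G-dual-weak} and the polar calculus of Lemma \ref{l:polar-1}, and in (vi) your argument is even a touch leaner than the paper's, since pairing against $(n)K+p^\ast(\widetilde F)$ directly (using $p^\ast(C)\subseteq K$) lets you apply the $\Mac$-condition to $K$ itself instead of first deriving $\widetilde K\in\Mac(G)$ from part (i). The one spot to tighten is (v): discreteness of $G/Y^\top$ in the quotient of $\TTT_Y$ only makes \emph{that} quotient's dual equal to $(G/Y^\top)^\ast$, whereas the assertion concerns the quotient of $\tau$; the clean fix (and the paper's argument) is to observe that any $\chi\in(G/Y^\top)^\ast$ has $\chi\circ p\in(Y^\top)^{\blacktriangleright}=Y^{\triangleleft\blacktriangleright}\subseteq\widehat G$ by Definition \ref{def:Mac(G)} with $n=1$ and $F=\{0\}$, or equivalently to pass through the compatibility $\widehat{(G,\TTT_Y)}=\widehat G$ that you already have from Theorem \ref{t:Mackey-compatible-dual}.
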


\begin{proof}
(i) Set $K':=\qc_{\widehat{G}}(K_0)$. Being a closed subset of $K$, the quasi-convex set $K'$ is weak${}^\ast$ compact. Further, by Lemmas \ref{l:topology-G-dual}(iii) and \ref{l:topology-G-dual-weak}(i), we have  $\sigma(G,\widehat{G})\leq \TTT_{K'} \leq \TTT_K$. Since, by Theorem \ref{t:Mackey-compatible-dual}, the topology $\TTT_K$ is compatible with  $\tau$ we obtain that also $\TTT_{K'} $ is compatible with $\tau$. Applying once again  Theorem \ref{t:Mackey-compatible-dual} we obtain  $K'\in\Mac(G)$.

(ii) The sufficiency is clear. To prove the necessity let $K':=\qc_{\widehat{G}}\big( (n)K+F\big)$. Then, by Lemma \ref{l:topology-G-dual-weak}(ii), we have $\TTT_{K'}=\TTT_K$, and hence, by Theorem \ref{t:Mackey-compatible-dual}, $K'\in \Mac(G)$.

(iii) By Proposition \ref{p:polar-U-k-compact}, the set $K:=U^{\triangleright}$ is a $\sigma(\widehat{G},G)$-compact quasi-convex subset of $\widehat{G}$. To show that $K\in \Mac(G)$, fix an $n\in\NN$ and a finite subset $F=V^{\triangleright}$ of $\widehat{G}$, where $V$ is a quasi-convex $\sigma(G,\widehat{G})$-neighborhood of zero. We have to show that  every $\eta\in \big( (n)K +F\big)^{\triangleleft\blacktriangleright}$ belongs to $\widehat{G}$. To this end, we apply (iii) of Lemma \ref{l:polar-1} and obtain
\[
\eta\Big( \big( (n)K +(n)K\big)^{\triangleleft}\cap \big( F+F \big)^{\triangleleft}\Big) \subseteq \Ss_+.
\]
Choose a $\TTT$-neighborhood $W$  of zero such that $(2n)W\subseteq U\cap V\subseteq K^\triangleleft \cap F^\triangleleft$. Then, by (iv) of Lemma \ref{l:polar-1}, we have
\[
W\subseteq \big( (2n)K \big)^{\triangleleft} \cap \big( (2n)F\big)^{\triangleleft}\subseteq \big( (n)K +(n)K\big)^{\triangleleft} \cap \big( F +F\big)^{\triangleleft}
\]
and hence $\eta(W)\subseteq \Ss_+$. Therefore, by Proposition \ref{p:basis-weak}, $\eta$ is $\TTT$-continuous and hence $\eta\in \widehat{(G,\TTT)}=\widehat{G}$.  Thus $\big( (n)K +F\big)^{\triangleleft\blacktriangleright}\subseteq \widehat{G}$ and so $K\in \Mac(G)$, as desired.

(iv) Assume that $A$ is equicontinuous, i.e, there is $U\in\Nn_{s}(G,\tau)$ such that $A\subseteq U^{\triangleright}$. Then, by Proposition \ref{p:Mackey-equi}, also the set $K:=\qc_{\widehat{G}}(A)\subseteq U^{\triangleright}$ is equicontinuous.  Therefore $K^{\triangleleft}\in\Nn_{qc}(G,\tau)$, see Proposition \ref{p:Mackey-equi}. Thus, by (iii), $K=K^{\triangleleft\triangleright}$ belongs to $\Mac(G)$. 

(v) Since $Y\in\Mac(G)$, it is weak${}^\ast$ compact. By the definition of $\TTT_Y$, the inverse polar $Y^{\triangleleft}=Y^\top$ is a {\em $\TTT_Y$-open} subgroup of $G$. Denote by $p:G\to G/Y^\top$ the quotient map and let $\chi\in (G/Y^\top)^\ast$. Then $\chi\circ p\in (Y^\top)^{\blacktriangleright}= Y^{\triangleleft\blacktriangleright}$. Since $Y\in\Mac(G)$, Definition \ref{def:Mac(G)} (applied to $n=1$ and $F=\{0\}$) implies that $\chi\circ p\in \widehat{G}$ and hence $\chi\in \widehat{G/Y^\top}$.

(vi) By Lemma \ref{l:adjoint}, the adjoint map $p^\ast$ is an embedding of $\widehat{H}_{\sigma^\ast}$ onto a dually closed subgroup of $\Gsa$. Therefore $\widetilde{K}$ is a $\sigma(\widehat{G},G)$-compact quasi-convex subset of $\widehat{G}$, and hence $C$ is a $\sigma(\widehat{H},H)$-compact quasi-convex subset of $\widehat{H}$. To show that $C\in\Mac(H)$, fix $n\in\NN$, a finite $0\in F\subseteq \widehat{H}$ and let $\chi\in \big( (n)C +F\big)^{\triangleleft\blacktriangleright}$. Set $\eta:=p^\ast(\chi)\in G^\ast$ and $\widetilde{F}:=p^\ast(F)$.

For every $g\in \big( (n)\widetilde{K} +\widetilde{F}\big)^{\triangleleft}$ and each $\chi_1,\dots,\chi_n\in C$ and $f\in F$, we have $\eta_1:=p^\ast(\chi_1),\dots,\eta_n:=p^\ast(\chi_n)\in\widetilde{K}$ and $\widetilde{f}:=p^\ast(f)\in \widetilde{F}$, and hence
\[
\big(\chi_1+\cdots+\chi_n+f,p(g)\big)=\big(\eta_1+\cdots+\eta_n+\widetilde{f},g\big)\in\Ss_+,
\]
so $p(g)\in \big( (n)C +F\big)^{\triangleleft}$. Therefore $(\eta,g)=(\chi,p(g))\in \Ss_+$ for every $g\in \big( (n)\widetilde{K} +\widetilde{F}\big)^{\triangleleft}$, and hence $\eta\in \big( (n)\widetilde{K} +\widetilde{F}\big)^{\triangleleft\blacktriangleright}$. By (i), we have $\widetilde{K}\in\Mac(G)$. Consequently $\eta=p^\ast(\chi)=\chi\circ p\in \widehat{G}$ and hence $\chi\in \widehat{H}$. Thus $C\in\Mac(H)$.\qed
\end{proof}

\begin{proposition} \label{p:Mac-product}
Let $(G,\tau)$ and $(H,\nu)$  be $MAP$ abelian groups.  Then
\[
\Mac(G)\times \Mac(H) \subseteq \Mac(G\times H).
\]
\end{proposition}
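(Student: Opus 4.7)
The plan is to directly verify the three requirements of Definition \ref{def:Mac(G)} for $K := K_1\times K_2$, after making the canonical identification $\widehat{G\times H} = \widehat{G}\times\widehat{H}$ via $(\chi_1,\chi_2)(g,h):=(\chi_1,g)(\chi_2,h)$, under which $(\widehat{G\times H})_{\sigma^\ast}$ becomes $\Gsa\times\widehat{H}_{\sigma^\ast}$. Weak${}^\ast$-compactness of $K$ is then immediate from Tychonoff. For quasi-convexity, given $(\chi_1,\chi_2)\notin K$ with say $\chi_1\notin K_1$, I would pick $g_0\in K_1^\triangleleft$ with $(\chi_1,g_0)\notin\Ss_+$; since $0\in K_2$, the element $(g_0,0)$ lies in $K^\triangleleft$ yet separates $(\chi_1,\chi_2)$ from $K$.

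The main work is the $\Mac$-condition. Fix $n\in\NN$ and a finite set $F\ni(0,0)$ in $\widehat{G}\times\widehat{H}$, and let $F_1,F_2$ be its (necessarily finite, $0$-containing) coordinate projections. Set $A_i:=(n)K_i+F_i$. Since $(n)(K_1\times K_2)=(n)K_1\times(n)K_2$ and $F\subseteq F_1\times F_2$, one obtains $(n)K+F\subseteq A_1\times A_2$ and therefore
\[
(A_1\times A_2)^{\triangleleft}\subseteq\bigl((n)K+F\bigr)^{\triangleleft}.
\]

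The key step I would then establish is the containment
\[
(A_1^{\triangleleft})_{(2)}\times(A_2^{\triangleleft})_{(2)}\subseteq (A_1\times A_2)^{\triangleleft}.
\]
This is the one place that requires genuine argument: by Lemma \ref{l:polar-1}(vii), $(A_i^{\triangleleft})_{(2)}=\bigl((2)A_i\bigr)^{\triangleleft}$, and then by Lemma \ref{l:polar-1}(i) applied with $n=2$, every value $(\chi_i,x_i)$ with $\chi_i\in A_i$ and $x_i\in(A_i^{\triangleleft})_{(2)}$ lies in $\Ss_2=\bigl(\tfrac{1}{2}\bigr)\Ss_+$, so the product $(\chi_1,x_1)(\chi_2,x_2)\in\Ss_2\cdot\Ss_2\subseteq\Ss_+$. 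I expect this ``doubling trick'' to be the only non-routine point of the proof.

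To conclude, given $\eta=(\eta_1,\eta_2)\in\bigl((n)K+F\bigr)^{\triangleleft\blacktriangleright}$, specializing the second coordinate to $0$ in the containment above yields
\[
\eta_1\in\bigl((A_1^{\triangleleft})_{(2)}\bigr)^{\blacktriangleright}=\bigl((2)A_1\bigr)^{\triangleleft\blacktriangleright}=\bigl((2n)K_1+(2)F_1\bigr)^{\triangleleft\blacktriangleright}.
\]
Since $(2)F_1$ is a finite subset of $\widehat{G}$ containing $0$, the hypothesis $K_1\in\Mac(G)$ forces $\eta_1\in\widehat{G}$; symmetrically $\eta_2\in\widehat{H}$, so $\eta\in\widehat{G\times H}$. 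This verifies $K\in\Mac(G\times H)$ and closes the argument.
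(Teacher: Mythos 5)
Your proof is correct and its core coincides with the paper's: both enlarge $F$ to the product $F_1\times F_2$ of its projections and then use the same doubling trick --- via Lemma \ref{l:polar-1}(i),(ii),(vii) and $\Ss_2\cdot\Ss_2\subseteq\Ss_+$ --- to place the product set $\bigl((2)A_1\bigr)^{\triangleleft}\times\bigl((2)A_2\bigr)^{\triangleleft}$ inside $\bigl((n)K+F\bigr)^{\triangleleft}$; the set you call $(A_1^{\triangleleft})_{(2)}\times(A_2^{\triangleleft})_{(2)}$ is exactly the set the paper names $A$. The only divergence is the endgame. The paper observes that $A$ is a neighborhood of zero in $\TTT_{K_1}\times\TTT_{K_2}$ and invokes Theorem \ref{t:Mackey-compatible-dual} (compatibility of each polar topology, hence of their product) together with Proposition \ref{p:basis-weak} to conclude that a character bounded on $A$ is continuous. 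You instead restrict $\eta$ to each coordinate axis and apply Definition \ref{def:Mac(G)} directly to $K_1$ with parameters $2n$ and $(2)F_1$, and symmetrically to $K_2$; this is marginally more elementary since it bypasses the polar-topology machinery altogether, at the cost of not displaying the link with $\TTT_{K_1}\times\TTT_{K_2}$ that the paper exploits elsewhere. Your extra verifications of compactness (Tychonoff) and quasi-convexity of $K_1\times K_2$, which the paper dismisses as clear, are also correct.
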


\begin{proof}
Let $K\in \Mac(G)$ and $C\in \Mac(H)$. Clearly, $K\times C$ is a quasi-convex, compact subset of $\big(\widehat{G\times H}\big)_{\sigma^\ast}=\widehat{G}_{\sigma^\ast}\times \widehat{H}_{\sigma^\ast}$. Fix an $n\in\NN$ and a finite $0\in F\subseteq \widehat{G}\times \widehat{H} $. Without loss of generality we can assume that $F=F_G\times F_H$, where $0\in F_G\subseteq \widehat{G}$ and $0\in F_H\subseteq \widehat{H}$ are finite. Then
\[
F+ (n)K\times C = \big(F_G+ (n)K\big)\times \big(F_H+ (n)C\big).
\]
By (ii) of Lemma \ref{l:polar-1}, we have
\[
\big((2)F_G+ (2n)K\big)^\triangleleft +\big((2)F_G+ (2n)K\big)^\triangleleft \subseteq \big(F_G+ (n)K\big)^\triangleleft
\]
and, respectively,
\[
\big((2)F_H+ (2n)C\big)^\triangleleft +\big((2)F_H+ (2n)C\big)^\triangleleft \subseteq \big(F_H+ (n)C\big)^\triangleleft.
\]
Set $A:= \big((2)F_G+ (2n)K\big)^\triangleleft \times \big((2)F_H+ (2n)C\big)^\triangleleft$. Now, if $(g,h)\in A$, then for every $(\chi,\eta)\in F+ (n)K\times C$, (i) of Lemma \ref{l:polar-1} implies
\[
\big( (\chi,\eta), (g,h)\big) =(\chi,g)\cdot (\eta, h)\in \Ss_2 \cdot \Ss_2 =\Ss_+.
\]
Therefore $A\subseteq \big(F+ (n)K\times C\big)^\triangleleft$. Hence, for every $\xi=(\zeta,\phi)\in  \big(F+ (n)K\times C\big)^{\triangleleft\blacktriangleright} \subseteq G^\ast\times H^\ast$ and each $(g,h)\in A$, we have $\xi\in A^\blacktriangleright$.
Since $A\in \TTT_K\times\TTT_C$, Proposition \ref{p:basis-weak} implies that $\xi\in \big(G\times H, \TTT_K\times\TTT_C\big)^\wedge$. On the other hand, Theorem \ref{t:Mackey-compatible-dual} implies that $\TTT_K\times\TTT_C$ is compatible with $\tau\times\nu$. Therefore $\xi\in \widehat{G}\times \widehat{H}$. Thus $K\times C\in \Mac(G\times H)$.\qed
\end{proof}

The next theorem describes all compatible lqc group topologies on $(G,\tau)$ and solves Problem \ref{prob:description}.

\begin{theorem} \label{t:compatible-description}
Let $(G,\tau)$ be a $MAP$ abelian group, and let $\TTT$ be an lqc group topology on the abelian group $G$. Then $\TTT$ is compatible with $\tau$ if and only if there is a directed subset $\KK$ of $\Mac(G)$ such that $\TTT=\vee_{K\in \KK} \TTT_K$. In this case one can take $\KK=\{U^{\triangleright}: U\in\Nn_{qc}(G,\TTT)\}$.
\end{theorem}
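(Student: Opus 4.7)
My plan is to prove the two implications separately, relying on Theorem~\ref{t:Mackey-compatible-dual} (which identifies $\TTT_K$ as compatible with $\tau$ precisely when $K\in\Mac(G)$), Proposition~\ref{p:Mac-her}(iii) (which produces members of $\Mac(G)$ from polars of compatible neighborhoods), and the polar calculus of Lemma~\ref{l:polar-1}. The strategy for sufficiency is to collapse an arbitrary finite intersection of $\TTT_{K_i}$-basic neighborhoods into a single basic neighborhood of some $\TTT_K$ with $K\in\KK$, using the directedness of $\KK$; the strategy for necessity is to verify that the canonical family $\KK=\{U^\triangleright:U\in\Nn_{qc}(G,\TTT)\}$ already satisfies the required properties.

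For sufficiency, assume $\KK\subseteq\Mac(G)$ is directed and set $\TTT:=\vee_{K\in\KK}\TTT_K$. Each $\TTT_K$ is lqc by Lemma~\ref{l:topology-G-dual}(i), and finite intersections of quasi-convex sets are quasi-convex, so $\TTT$ is lqc. The inclusion $\widehat{G}\subseteq\widehat{(G,\TTT)}$ follows since $\sigma(G,\widehat{G})\leq \TTT_K$ for each $K$. Conversely, let $\chi\in\widehat{(G,\TTT)}$; then there exist $K_1,\dots,K_k\in\KK$, $n_1,\dots,n_k\in\NN$ and finite $0\in F_i\subseteq\widehat{G}$ with
\[
\chi\Bigl(\bigcap_{i=1}^k\bigl((n_i)K_i+F_i\bigr)^\triangleleft\Bigr)\subseteq \Ss_+.
\]
By iterated directedness, pick $K\in\KK$ with $K_1\cup\cdots\cup K_k\subseteq K$, and set $N:=n_1+\cdots+n_k$ and $F:=\bigcup_i F_i$ (finite, containing $0$). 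Since $0\in K$ and $K_i\subseteq K$, we have $(n_i)K_i\subseteq (n_i)K\subseteq (N)K$ and $F_i\subseteq F$, hence $(n_i)K_i+F_i\subseteq (N)K+F$ for each $i$. Taking inverse polars reverses inclusion, so
\[
\bigl((N)K+F\bigr)^\triangleleft \subseteq \bigcap_{i=1}^k\bigl((n_i)K_i+F_i\bigr)^\triangleleft.
\]
Therefore $\chi\in\bigl((N)K+F\bigr)^{\triangleleft\blacktriangleright}$, and the assumption $K\in\Mac(G)$ yields $\chi\in\widehat{G}$.

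For necessity, assume $\TTT$ is lqc and compatible with $\tau$, and set $\KK:=\{U^\triangleright:U\in\Nn_{qc}(G,\TTT)\}$. Each $U^\triangleright$ lies in $\Mac(G)$ by Proposition~\ref{p:Mac-her}(iii) applied to the topology $\TTT$ (compatibility is what this requires). Directedness: for $U_1,U_2\in\Nn_{qc}(G,\TTT)$, the set $U:=U_1\cap U_2$ is a quasi-convex $\TTT$-neighborhood and $U^\triangleright\supseteq U_1^\triangleright\cup U_2^\triangleright$. To show $\vee_{K\in\KK}\TTT_K\leq\TTT$, fix $K=U^\triangleright\in\KK$; since $U$ is quasi-convex, $K^\triangleleft=U$. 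Given a basic $\TTT_K$-neighborhood $\bigl((n)K+F\bigr)^\triangleleft$, applying Lemma~\ref{l:polar-1}(iii) with $(n)K$ in place of $K$ gives
\[
\bigl((2n)K\bigr)^\triangleleft\cap\bigl((2)F\bigr)^\triangleleft\subseteq \bigl((n)K+F\bigr)^\triangleleft,
\]
and by Lemma~\ref{l:polar-1}(vii) the left-hand side equals $(K^\triangleleft)_{(2n)}\cap(F^\triangleleft)_{(2)}=U_{(2n)}\cap(F^\triangleleft)_{(2)}$, which is a $\TTT$-neighborhood of zero (because $U_{(2n)}$ is by continuity of multiplication by integers and because $\sigma(G,\widehat{G})\leq\TTT$ makes $(F^\triangleleft)_{(2)}$ a $\TTT$-neighborhood). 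Conversely, $\TTT\leq\vee_{K\in\KK}\TTT_K$ because every $U\in\Nn_{qc}(G,\TTT)$ equals $K^\triangleleft$ for $K=U^\triangleright\in\KK$ and is therefore the basic $\TTT_K$-neighborhood corresponding to $n=1$, $F=\{0\}$; since $\TTT$ is lqc, these $U$ form a base.

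The main technical obstacle is the first direction: a supremum of topologies produces only finite intersections of basic neighborhoods from the individual $\TTT_{K_i}$, and to invoke the Mackey property of a single $K$ one must absorb the whole intersection into a basic neighborhood of some $\TTT_K$. The key point is that directedness of $\KK$ lets us swallow all the $K_i$, while the finiteness of each $F_i$ lets us absorb $\bigcup_i F_i$ into one finite $F$; once these are aligned the elementary polar inclusion $(n_i)K_i+F_i\subseteq (N)K+F$ does the rest. The necessity direction is comparatively routine once one recognizes that quasi-convexity of $U$ gives $K^\triangleleft=U$ and Lemma~\ref{l:polar-1}(iii),(vii) supplies the standard two-sided polar inclusion needed to compare $\TTT_K$ with $\TTT$.
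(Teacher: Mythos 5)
Your proof is correct and follows essentially the same route as the paper's: sufficiency by using directedness to swallow the finitely many $K_i$ into one $K$ and absorbing the finite sets so that the definition of $\Mac(G)$ (equivalently, Theorem~\ref{t:Mackey-compatible-dual}) applies, and necessity via Proposition~\ref{p:Mac-her}(iii) together with the polar calculus of Lemma~\ref{l:polar-1}(iii),(iv),(vii). The only cosmetic difference is that you verify $\TTT_K\leq\TTT$ for each $K\in\KK$ separately (so directedness is not needed in that step), whereas the paper first collapses a general $\vee$-neighborhood into a single $\TTT_K$ via directedness; both are valid.
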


\begin{proof}
Assume that $\TTT$ is compatible with $\tau$; in particular, $\sigma(G,\widehat{G})\leq\TTT$. Denote by $\KK$ the family of all subsets $K$ of $\widehat{G}$ such that $K=U^{\triangleright}$ for some quasi-convex $\TTT$-neighborhood $U$ of zero. Then, by Proposition \ref{p:Mac-her}(iii), $\KK\subseteq \Mac(G)$.

To show that $\KK$ is directed, let $K=U^{\triangleright},C=V^{\triangleright}\in \KK$. Set $W:=U\cap V$. Then $W$ is a quasi-convex $\TTT$-neighborhood of zero such that $K\cup C\subseteq W^{\triangleright}$. Thus $\KK$ is directed.

We claim that $\TTT=\vee_{K\in \KK} \TTT_K$. Indeed, set $\nu: =\vee_{K\in \KK} \TTT_K$. The construction of the family $\KK$ and the definition of $\TTT_K$ immediately imply $\TTT\leq\nu$. To prove the inverse inclusion $\TTT\geq\nu$, let $W$ be a $\nu$-neighborhood of zero. Then $W\in \TTT_{K_1}\vee\cdots\vee\TTT_{K_t}$ for some $K_1,\dots,K_t\in\KK$. Since $\KK$ is directed, there is $K\in\KK$ such that $\bigcup_{i=1}^t K_i \subseteq K$. Therefore, by (i) of Lemma \ref{l:topology-G-dual-weak}, $W\in \TTT_K$ and we can assume that $W=\big( (n)K+F\big)^{\triangleleft}$ for some $n\in\NN$, finite $F=V^{\triangleright}\subseteq \widehat{G}$ and $K=U^{\triangleright}\in\KK$, where $U\in\Nn_{qc}(G,\TTT)$ and $V$ is a quasi-convex $\sigma(G,\widehat{G})$-neighborhood of zero. Now choose $\mathcal{O}\in\Nn_{qc}(G,\TTT)$ such that $(2n)\mathcal{O}\subseteq U\cap V$. Then (iii) and (iv) of Lemma \ref{l:polar-1} imply
\[
\mathcal{O}\subseteq \big( (2n)K \big)^{\triangleleft} \cap \big( (2n)F\big)^{\triangleleft}\subseteq \big( (n)K +(n)K\big)^{\triangleleft} \cap \big( F +F\big)^{\triangleleft} \subseteq \big( (n)K+F\big)^{\triangleleft} =W.
\]
Thus $\nu\leq\TTT$ and hence $\TTT=\nu$.

Conversely, assume that $\KK$ is a directed subset of $\Mac(G)$ such that $\TTT: =\vee_{K\in \KK} \TTT_K$. We have to show that $\TTT$ is compatible with $\tau$. To this end, let $\chi\in \widehat{(G,\TTT)}$. Then there are $K_1,\dots, K_m\in \Mac(G)$, $n_1,\dots,n_m\in\NN$ and finite sets $F_1,\dots,F_m\subseteq \widehat{G}$ containing zero such that $\chi(V)\subseteq \Ss_+$, where
\[
V=\big( (n_1)K_1 +F_1\big)^{\triangleleft}\cap\cdots\cap \big( (n_m)K_m +F_m\big)^{\triangleleft}.
\]
Set $n:=n_1+\cdots+n_m$, $F:=F_1+\cdots+F_m$ and choose $K\in \KK$ such that $\bigcup_{i=1}^m K_i \subseteq K$. Then $W:=\big( (n)K +F\big)^{\triangleleft}\in \TTT_K$ and $W\subseteq V$. Clearly, $\chi(W)\subseteq \Ss_+$ and hence, by Proposition \ref{p:basis-weak}, $\chi$ is $\TTT_K$-continuous. As $\TTT_K$ is compatible with $\tau$ (see Theorem \ref{t:Mackey-compatible-dual}), we obtain $\chi\in \widehat{G}$. Thus $\TTT$ is compatible with $\tau$.\qed
\end{proof}

Proposition \ref{p:Mac-her} and Theorem \ref{t:compatible-description} motivate the next problem. Let $K_1,K_2\in\Mac(G)$. {\em When does $\qc_{\widehat{G}}(K_1\cup K_2)$ belong to $\Mac(G)$}? Below we answer this question.

\begin{proposition} \label{p:Mac(G)-nuclear}
Let $(G,\tau)$ be  a $MAP$ abelian group, and let $K_1,K_2\in \Mac(G)$. Then  the following assertions are equivalent:
\begin{enumerate}
\item[{\rm(i)}] there is $K\in \Mac(G)$ such that $K_1\cup K_2\subseteq K$;
\item[{\rm(ii)}] $\qc_{\widehat{G}}(K_1\cup K_2)\in \Mac(G)$;
\item[{\rm(iii)}] the topology $\TTT_{K_1}\vee \TTT_{K_2}$ is compatible with $\tau$;
\item[{\rm(iv)}] the diagonal subgroup $\Delta:=\{(g,g): g\in G\}$ of the product $L:=(G,\TTT_{K_1})\times (G,\TTT_{K_2})$ is dually embedded.
\end{enumerate}
\end{proposition}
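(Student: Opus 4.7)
The plan is to prove the chain of equivalences $(i)\Leftrightarrow(ii)\Leftrightarrow(iii)\Leftrightarrow(iv)$. The equivalence $(i)\Leftrightarrow(ii)$ is essentially formal: $(ii)\Rightarrow(i)$ is witnessed by $K:=\qc_{\widehat{G}}(K_1\cup K_2)$ itself, while $(i)\Rightarrow(ii)$ follows from Proposition~\ref{p:Mac-her}(i) applied to the subset $K_0:=K_1\cup K_2$ of $K\in\Mac(G)$.

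The heart of the proof is $(ii)\Leftrightarrow(iii)$, which I would reduce to the polar identity
\[
\TTT_K=\TTT_{K_1}\vee\TTT_{K_2}\qquad\text{for}\qquad K:=\qc_{\widehat{G}}(K_1\cup K_2);
\]
Theorem~\ref{t:Mackey-compatible-dual} then turns $(iii)$ into the statement $K\in\Mac(G)$, which is $(ii)$. The easy inequality $\TTT_{K_1}\vee\TTT_{K_2}\leq\TTT_K$ follows from $K_i\subseteq K$ via Lemma~\ref{l:topology-G-dual-weak}(i). For the reverse inequality, note that every nonempty quasi-convex set contains $0$ (take $g=0$ in the definition, since $1\in\Ss_+$), so $0\in K_1\cap K_2$ and, padding sums with zeros, one has $(m)(K_1\cup K_2)\subseteq (m)K_1+(m)K_2$ for every $m\in\NN$. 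Lemma~\ref{l:polar-1}(vi) then yields $((2n)K)^\triangleleft=((2n)(K_1\cup K_2))^\triangleleft\supseteq((2n)K_1+(2n)K_2)^\triangleleft$, and two applications of Lemma~\ref{l:polar-1}(iii)---first splitting $(n)K$ from $F$, then splitting $(2n)K_1$ from $(2n)K_2$---show that any basic $\TTT_K$-neighborhood $((n)K+F)^\triangleleft$ contains the $\TTT_{K_1}\vee\TTT_{K_2}$-neighborhood $((4n)K_1)^\triangleleft\cap ((4n)K_2)^\triangleleft\cap ((2)F)^\triangleleft$ (the last factor lies in $\sigma(G,\widehat{G})\leq\TTT_{K_i}$).

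For $(iii)\Leftrightarrow(iv)$, the hypothesis $K_1,K_2\in\Mac(G)$ together with Theorem~\ref{t:Mackey-compatible-dual} gives $\widehat{(G,\TTT_{K_i})}=\widehat{G}$, whence $\widehat{L}=\widehat{G}\times\widehat{G}$. The bijection $\Delta\to (G,\TTT_{K_1}\vee\TTT_{K_2})$, $(g,g)\mapsto g$, is a topological isomorphism because $\Delta\cap(U_1\times U_2)$ corresponds to $U_1\cap U_2$. Under this identification the restriction map $\widehat{L}\to\widehat{\Delta}$ becomes $(\chi_1,\chi_2)\mapsto\chi_1+\chi_2$, whose image is all of $\widehat{G}$. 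Since $\sigma(G,\widehat{G})\leq\TTT_{K_1}\vee\TTT_{K_2}$ always yields $\widehat{G}\subseteq\widehat{\Delta}$, the subgroup $\Delta$ is dually embedded in $L$ precisely when $\widehat{\Delta}=\widehat{G}$, that is, when $\TTT_{K_1}\vee\TTT_{K_2}$ is compatible with $\tau$.

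The main obstacle is the identity $\TTT_K=\TTT_{K_1}\vee\TTT_{K_2}$: in the nontrivial direction one must iterate Lemma~\ref{l:polar-1}(iii) with careful bookkeeping of the successive doublings of $n$, and exploit $0\in K_1\cap K_2$ to compare $(n)(K_1\cup K_2)$ with the sum $(n)K_1+(n)K_2$. Once this polar identity is in place, the remaining equivalences reduce to Theorem~\ref{t:Mackey-compatible-dual}, Proposition~\ref{p:Mac-her}(i), and a routine character-restriction argument.
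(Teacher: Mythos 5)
Your proof is correct, and for the central equivalence it takes a genuinely different route from the paper. Where the paper only establishes the one-sided inequality $\sigma(G,\widehat{G})\leq\TTT_{K_1}\vee\TTT_{K_2}\leq\TTT_{K'}$ (for $K':=\qc_{\widehat{G}}(K_1\cup K_2)$) and deduces (ii)$\Rightarrow$(iii) by the sandwich argument on dual groups, you prove the exact polar identity $\TTT_{K'}=\TTT_{K_1}\vee\TTT_{K_2}$ via Lemma~\ref{l:polar-1}(iii),(vi) and the padding inclusion $(m)(K_1\cup K_2)\subseteq(m)K_1+(m)K_2$; your index bookkeeping $\big((4n)K_1\big)^{\triangleleft}\cap\big((4n)K_2\big)^{\triangleleft}\cap\big((2)F\big)^{\triangleleft}\subseteq\big((n)K'+F\big)^{\triangleleft}$ checks out. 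This identity gives (ii)$\Leftrightarrow$(iii) in one stroke via Theorem~\ref{t:Mackey-compatible-dual} and makes (iii)$\Rightarrow$(i) immediate, whereas the paper closes that implication by invoking the full description Theorem~\ref{t:compatible-description} to extract a directed family $\KK$ containing $K_1$ and $K_2$. Your route is more elementary and self-contained (it needs only the polar lemmas and Theorem~\ref{t:Mackey-compatible-dual}); the paper's is shorter once the description theorem is in hand and ties the proposition to the directedness machinery used later in Theorem~\ref{t:Mackey-exist}. Your treatment of (i)$\Leftrightarrow$(ii) and of (iii)$\Leftrightarrow$(iv) -- identifying $\Delta$ with $(G,\TTT_{K_1}\vee\TTT_{K_2})$, noting $\widehat{L}=\widehat{G}\times\widehat{G}$ from Theorem~\ref{t:Mackey-compatible-dual}, and reading dual embeddedness as surjectivity of $(\chi_1,\chi_2)\mapsto\chi_1+\chi_2$ onto $\widehat{\Delta}$ -- is essentially the paper's argument, compressed.
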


\begin{proof}
(i)$\Rightarrow$(ii)  By assumption, there is $K\in\Mac(G)$ such that $K_1\cup K_2\subseteq K$. Therefore, by (i) of Proposition \ref{p:Mac-her}, $\qc_{\widehat{G}}(K_1\cup K_2)\in \Mac(G)$ as desired.

(ii)$\Rightarrow$(iii) Set $K':=\qc_{\widehat{G}}(K_1\cup K_2)$, so $K'\in \Mac(G)$. Then,  by Lemmas \ref{l:topology-G-dual}(iii) and \ref{l:topology-G-dual-weak}(i), we have  $\sigma(G,\widehat{G})\leq \TTT_{K_1}\vee \TTT_{K_2}\leq \TTT_{K'}$. Since, by Theorem \ref{t:Mackey-compatible-dual}, the topology $\TTT_{K'}$ is compatible with  $\tau$ so must be also $\TTT_{K_1}\vee \TTT_{K_2}$.

(iii)$\Rightarrow$(i) By assumption the topology $\nu:=\TTT_{K_1}\vee \TTT_{K_2}$ is compatible with $\tau$. Therefore, by Theorem \ref{t:compatible-description}, $\nu=\vee_{K\in \KK} \TTT_K$ where $\KK=\{U^{\triangleright}: U\in\Nn_{qc}(G,\nu)\}$ is a directed subfamily of $\Mac(G)$. Since, for $i\in\{1,2\}$,  $\TTT_{K_i}\leq \nu$ and $K_i^{\triangleleft}$ is a quasi-convex $\TTT_{K_i}$-neighborhood of zero, we obtain that $K_i^{\triangleleft}\in\Nn_{qc}(G,\nu)$ and hence $K_i=K_i^{\triangleleft\triangleright}\in \KK$. Since $\KK$ is directed, there is $K\in\KK\subseteq \Mac(G)$ such that $K_1\cup K_2\subseteq K$.

(iii)$\Rightarrow$(iv) Let $\chi\in \widehat{\Delta}$. Since $\delta:(G, \TTT_{K_1}\vee \TTT_{K_2})\to \Delta$, $\delta(g):=(g,g)$,  is a topological isomorphism, we obtain $\delta^\ast(\chi)\in \widehat{(G, \TTT_{K_1}\vee \TTT_{K_2})}=\widehat{G}$. Consider $\eta:=(\delta^\ast(\chi),0)\in \widehat{L}$. Then for every $g\in G$, we have
\[
\big(\eta,(g,g)\big)=(\delta^\ast(\chi),g)=(\chi,\delta(g))=\big(\chi,(g,g)\big)
\]
and hence $\eta$ extends $\chi$. Thus $\Delta$ is dually embedded in $L$.

(iv)$\Rightarrow$(iii) Our proof follows the proof of Theorem 2.7 of \cite{Gab-Mackey}. 
By assumption,  for every $\widetilde{\chi}\in \widehat{\Delta}$ there is $(\xi,\zeta)\in \widehat{L}$ such that $\big( \widetilde{\chi}, (g,g)\big)=(\xi,g)(\zeta,g)$ for all $g\in  G$. As $K_1,K_2\in \Mac(G)$, Theorem \ref{t:Mackey-compatible-dual} implies $\xi,\zeta\in \widehat{G}$ and hence $\big( \widetilde{\chi}, (g,g)\big)=(\xi+\zeta,g)$.

Set $H:=(G,\TTT_{K_1}\vee \TTT_{K_2})$ and observe that the canonical map $\delta: H\to L$, $\delta(g):=(g,g)$ for $g\in G$, is an embedding of $H$ onto $\Delta$.
Now, let $\chi\in \widehat{H}$, fix ${\bar\chi}\in(\delta^\ast)^{-1}(\chi)\in \widehat{L}$ (such ${\bar\chi}$ exists because $\Delta$ is dually embedded in $L$) and set $\widetilde{\chi}:={\bar\chi}{\restriction}_\Delta \in \widehat{\Delta}$. Then
\[
(\chi,g)=\big(\delta^\ast ({\bar\chi}),g\big)=({\bar\chi},\delta(g))=(\widetilde{\chi},\delta(g))=(\xi+\zeta,g) \quad (g\in G).
\]
Therefore $\chi=\xi+\zeta\in \widehat{G}$ and hence $\widehat{H}\subseteq \widehat{G}$. Since $\sigma(G,\widehat{G})\leq \TTT_{K_1}\leq \TTT_{K_1}\vee \TTT_{K_2}$, it is clear that $\widehat{G}\subseteq \widehat{H}$. Thus $\widehat{H}=\widehat{G}$ and hence $\TTT_{K_1}\vee \TTT_{K_2}$ is compatible with $\tau$.\qed
\end{proof}


The following characterization of lqc abelian groups which have a Mackey group topology is more precise than Theorem \ref{t:Char-Mackey-MP}.
\begin{theorem} \label{t:Mackey-exist}
For  a locally quasi-convex abelian group $(G,\tau)$ the following assertions are equivalent:
\begin{enumerate}
\item[{\rm(i)}] $G$ has a Mackey group topology;
\item[{\rm(ii)}]  the family $\Mac(G)$ is directed;
\item[{\rm(iii)}] $\qc_{\widehat{G}}(K_1\cup K_2)\in \Mac(G)$ for every $K_1,K_2\in \Mac(G)$;
\item[{\rm(iv)}]  for every $K_1,K_2\in \Mac(G)$, the topology $\TTT_{K_1}\vee \TTT_{K_2}$ is compatible with $\tau$;
\item[{\rm(v)}] for every $K_1,K_2\in \Mac(G)$, the diagonal group $\Delta=\{(g,g): g\in G\}$ is dually embedded in the product $(G,\TTT_{K_1})\times (G,\TTT_{K_2})$.
\end{enumerate}
If {\rm (i)-(v)} are satisfied, the topology $\mu: =\vee_{K\in \Mac(G)} \TTT_K$ is Mackey.
\end{theorem}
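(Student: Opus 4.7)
The plan is to close the cycle (i) $\Rightarrow$ (iv) $\Leftrightarrow$ (v) $\Leftrightarrow$ (iii) $\Leftrightarrow$ (ii) $\Rightarrow$ (i), with the final clause about $\mu$ falling out of the step (ii) $\Rightarrow$ (i). The three-way equivalence (iii) $\Leftrightarrow$ (iv) $\Leftrightarrow$ (v) will be obtained by simply universally quantifying Proposition \ref{p:Mac(G)-nuclear} over all pairs $K_1, K_2 \in \Mac(G)$, since that proposition already proves, for each fixed pair, the equivalence among the conditions ``$\qc_{\widehat{G}}(K_1\cup K_2)\in\Mac(G)$'', ``$\TTT_{K_1}\vee\TTT_{K_2}$ is compatible with $\tau$'', and ``$\Delta$ is dually embedded in $(G,\TTT_{K_1})\times(G,\TTT_{K_2})$''.

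The step (ii) $\Leftrightarrow$ (iii) uses the hereditary property from Proposition \ref{p:Mac-her}(i). Assuming (ii) and given $K_1,K_2\in\Mac(G)$, I would pick $K\in\Mac(G)$ with $K_1\cup K_2\subseteq K$; since $\qc_{\widehat{G}}(K_1\cup K_2)\subseteq K$, Proposition \ref{p:Mac-her}(i) places this quasi-convex hull in $\Mac(G)$, giving (iii). For the converse, the set $\qc_{\widehat{G}}(K_1\cup K_2)$ itself belongs to $\Mac(G)$ and dominates both $K_1$ and $K_2$, witnessing directedness. The implication (ii) $\Rightarrow$ (i) is the main construction: because $\Mac(G)$ is directed, Theorem \ref{t:compatible-description} (applied to $\KK:=\Mac(G)$ itself) produces an lqc group topology $\mu:=\vee_{K\in\Mac(G)}\TTT_K$ that is compatible with $\tau$; and for any other lqc compatible topology $\nu$, Theorem \ref{t:compatible-description} writes $\nu=\vee_{K\in\KK}\TTT_K$ for a directed $\KK\subseteq\Mac(G)$, so $\nu\leq\mu$. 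Thus $\mu$ is the finest lqc compatible topology, i.e.\ $(G,\mu)$ is Mackey, which simultaneously verifies (i) and the concluding claim of the theorem.

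Finally, for (i) $\Rightarrow$ (iv) I would argue by a sandwich: if $\mu$ is a Mackey group topology on $G$ and $K_1,K_2\in\Mac(G)$, then by Theorem \ref{t:Mackey-compatible-dual} both $\TTT_{K_1}$ and $\TTT_{K_2}$ are lqc group topologies compatible with $\tau$, hence $\TTT_{K_1}\vee\TTT_{K_2}\leq\mu$, while Lemma \ref{l:topology-G-dual}(iii) gives $\sigma(G,\widehat{G})\leq\TTT_{K_1}\vee\TTT_{K_2}$; sandwiched between two topologies with dual group $\widehat{G}$, the join must itself be compatible with $\tau$. (Alternatively, this is a one-line consequence of Theorem \ref{t:Char-Mackey-MP}.) There is no real obstacle here since all the substantive work has been done in Proposition \ref{p:Mac(G)-nuclear}, Theorem \ref{t:Mackey-compatible-dual} and especially Theorem \ref{t:compatible-description}; the only care required is to apply Theorem \ref{t:compatible-description} to $\Mac(G)$ itself rather than an arbitrary subfamily, so that $\mu$ dominates every compatible lqc topology by construction.
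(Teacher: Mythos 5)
Your proposal is correct and follows essentially the same route as the paper: the equivalences (ii)$\Leftrightarrow$(iii)$\Leftrightarrow$(iv)$\Leftrightarrow$(v) come from Proposition \ref{p:Mac-her}(i) together with Proposition \ref{p:Mac(G)-nuclear}, and (ii)$\Rightarrow$(i) plus the final claim about $\mu$ come from applying Theorem \ref{t:compatible-description} to $\KK=\Mac(G)$. The only cosmetic difference is how the cycle is closed from (i): the paper proves (i)$\Rightarrow$(ii) directly by taking $K=U^{\triangleright}$ for a suitable Mackey-neighbourhood $U$ and invoking Lemma \ref{l:topology-G-dual}(ii)--(iii), whereas you prove (i)$\Rightarrow$(iv) by the sandwich $\sigma(G,\widehat{G})\leq\TTT_{K_1}\vee\TTT_{K_2}\leq\mu$ (equivalently, by Theorem \ref{t:Char-Mackey-MP}), which is equally valid.
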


\begin{proof}
(i)$\Rightarrow$(ii) Assume that $G$ has a Mackey group topology $\mu$. Let $K_1,K_2 \in \Mac(G)$. Then, by Theorem \ref{t:Mackey-compatible-dual}, $\TTT_{K_1}$ and $\TTT_{K_2}$ are compatible with $\mu$. Since $\mu$ is Mackey,  there are $U_1, U_2\in \mu$ such that $U_1\subseteq K_1^{\triangleleft}$ and $U_2\subseteq K_2^{\triangleleft}$. Set $U:=U_1\cap U_2$ and $K:=U^{\triangleright}\subseteq \widehat{G}$. Then $K_1\cup K_2 \subseteq K$ and, by (ii) of Lemma \ref{l:topology-G-dual}, $\TTT^{\mu}_K=\mu$. Therefore, by (iii) of Lemma \ref{l:topology-G-dual}, $\sigma(G,\widehat{G})\leq \TTT_K \leq \TTT^{\mu}_K=\mu$, and hence $\TTT_K$ is compatible with $\mu$. Now Theorem \ref{t:Mackey-compatible-dual} implies  $K\in \Mac(G)$. Thus $\Mac(G)$ is directed.

(ii)$\Rightarrow$(i) Let $\Mac(G)$ be directed.  Then, by Theorem \ref{t:compatible-description}, the topology $\mu: =\vee_{K\in \Mac(G)} \TTT_K$ is compatible with $\tau$ and any compatible topology (with $\tau$) is weaker than $\mu$. This means that $\mu$ is a Mackey group topology on $G$.

The equivalences (ii)$\Leftrightarrow$(iii)$\Leftrightarrow$(iv)$\Leftrightarrow$(v) follow from (i) of Proposition \ref{p:Mac-her} and Proposition \ref{p:Mac(G)-nuclear}.\qed
%
%
\end{proof}

In the next theorem we characterize Mackey groups.
\begin{theorem} \label{t:Mackey-group-equi}
An lqc abelian group $(G,\tau)$ is a Mackey group if and only if every $K\in\Mac(G)$ is equicontinuous. In this case $\tau =\vee_{K\in \Mac(G)} \TTT_K$.
\end{theorem}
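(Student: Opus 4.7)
The plan is to prove both directions using Theorem \ref{t:Mackey-compatible-dual} together with the description of compatible topologies in Theorem \ref{t:compatible-description}, with Proposition \ref{p:Mackey-equi} providing the link between the polar $K^\triangleleft$ being a $\tau$-neighborhood of zero and $K$ being equicontinuous.

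For the necessity, suppose $(G,\tau)$ is Mackey and fix $K\in\Mac(G)$. By Theorem \ref{t:Mackey-compatible-dual} the polar topology $\TTT_K$ is compatible with $\tau$, so the Mackey property forces $\TTT_K\leq\tau$. Since $K^\triangleleft$ is by construction a $\TTT_K$-neighborhood of zero, it is a $\tau$-neighborhood of zero, whence $K$ is equicontinuous by Proposition \ref{p:Mackey-equi}. For the converse, assume every $K\in\Mac(G)$ is equicontinuous, and let $\nu$ be an arbitrary lqc group topology on $G$ compatible with $\tau$. Theorem \ref{t:compatible-description} writes $\nu=\vee_{K\in\KK}\TTT_K$ for some directed $\KK\subseteq\Mac(G)$, so it suffices to prove $\TTT_K\leq\tau$ for each $K\in\Mac(G)$. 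A basic $\TTT_K$-neighborhood of zero has the form $\big((n)K+F\big)^\triangleleft$ with $n\in\NN$ and $0\in F\subseteq\widehat{G}$ finite; by Lemma \ref{l:polar-1}(iii) and (vii) it contains
\[
\big((2n)K\big)^\triangleleft\cap (F+F)^\triangleleft=(K^\triangleleft)_{(2n)}\cap (F+F)^\triangleleft.
\]
The first factor is a $\tau$-neighborhood of zero because, by the equicontinuity hypothesis, $K^\triangleleft$ is one and the group-topology axioms ensure $(K^\triangleleft)_{(2n)}$ remains one; the second is a $\sigma(G,\widehat{G})$-neighborhood of zero by Proposition \ref{p:basis-weak}(i), and $\sigma(G,\widehat{G})\leq\tau$. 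Hence $\big((n)K+F\big)^\triangleleft$ is a $\tau$-neighborhood, so $\TTT_K\leq\tau$ and therefore $\nu\leq\tau$, proving that $\tau$ is Mackey.

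Finally, the identity $\tau=\vee_{K\in\Mac(G)}\TTT_K$ follows by combining two inclusions. The sufficiency argument above shows $\TTT_K\leq\tau$ for every $K\in\Mac(G)$, so $\vee_{K\in\Mac(G)}\TTT_K\leq\tau$. Conversely, for each $U\in\Nn_{qc}(G,\tau)$ quasi-convexity gives $U=(U^\triangleright)^\triangleleft$, and $U^\triangleright\in\Mac(G)$ by Proposition \ref{p:Mac-her}(iii), so $U$ is a $\TTT_{U^\triangleright}$-neighborhood of zero; since such $U$ form a base at zero for the lqc topology $\tau$, we conclude $\tau\leq\vee_{K\in\Mac(G)}\TTT_K$. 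The main (purely technical) obstacle is the chain of polar manipulations needed to show that $\big((n)K+F\big)^\triangleleft$ is a $\tau$-neighborhood of zero, which is why Lemma \ref{l:polar-1} is invoked so heavily; everything else is forced by the prior machinery.
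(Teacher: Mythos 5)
Your proof is correct; both directions go through and every lemma is invoked legitimately. The overall toolkit is the paper's own (Proposition \ref{p:Mackey-equi}, Theorem \ref{t:Mackey-compatible-dual}, Proposition \ref{p:Mac-her}(iii)), but your converse direction takes a detour that the paper avoids. You decompose an arbitrary compatible topology $\nu$ as $\vee_{K\in\KK}\TTT_K$ via Theorem \ref{t:compatible-description} and then verify $\TTT_K\leq\tau$ for each $K\in\Mac(G)$ by estimating the basic neighborhoods $\big((n)K+F\big)^{\triangleleft}$ through Lemma \ref{l:polar-1}(iii),(vii); this works, but the paper gets the same conclusion in two lines: for any quasi-convex $\nu$-neighborhood $U$ of zero, $U^{\triangleright}\in\Mac(G)$ by Proposition \ref{p:Mac-her}(iii), hence $U^{\triangleright}$ is equicontinuous by hypothesis, hence $U=U^{\triangleright\triangleleft}$ is a $\tau$-neighborhood of zero by Proposition \ref{p:Mackey-equi}, so $\nu\leq\tau$ directly --- no decomposition and no polar arithmetic needed. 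Your route does buy the final identity $\tau=\vee_{K\in\Mac(G)}\TTT_K$ almost for free (you have already shown $\TTT_K\leq\tau$ for every $K\in\Mac(G)$, and the reverse inclusion via $U=(U^{\triangleright})^{\triangleleft}$ is exactly right), whereas the paper extracts that identity from Theorem \ref{t:Mackey-exist} in the necessity direction; both are fine. One small point worth making explicit in your argument: the claim that $(K^{\triangleleft})_{(2n)}$ is a $\tau$-neighborhood of zero is most cleanly justified by choosing $V\in\Nn(G,\tau)$ with $(2n)V\subseteq K^{\triangleleft}$ and applying Lemma \ref{l:polar-1}(iv), rather than appealing vaguely to ``the group-topology axioms.''
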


\begin{proof}
Assume that $(G,\tau)$ is a Mackey group. Then, by Theorem \ref{t:Mackey-exist}, $\tau=\vee_{K\in \Mac(G)} \TTT_K$. Therefore, for every $K\in \Mac(G)$, the inverse polar $K^{\triangleleft}$ is a neighborhood of zero in $\tau$. Thus, by Proposition \ref{p:Mackey-equi}, $K=K^{\triangleleft\triangleright}$ is equicontinuous.

Conversely, assume that every $K\in \Mac(G)$ is equicontinuous and let $\TTT$ be an lqc group topology on $G$ compatible with $\tau$. Then, by (iii) of Proposition \ref{p:Mac-her}, for every quasi-convex $\TTT$-neighborhood $U$ of zero, we have $U^{\triangleright}\in \Mac(G)$ and hence $U^{\triangleright}$ is equicontinuous. Then Proposition \ref{p:Mackey-equi} implies that $U=U^{\triangleright\triangleleft}$ is a $\tau$-neighborhood of zero. Thus $\TTT\leq \tau$ and hence $\tau$ is a Mackey group topology. \qed
\end{proof}


Following \cite{CMPT} (resp. \cite{MT}), a $MAP$ abelian group $G$ is called {\em $g$-barrelled} ({\em $c_0$-barrelled}) if any $\sigma(\widehat{G},G)$-compact subset (resp. $\sigma(\widehat{G},G)$-convergent sequence)  of  $\widehat{G}$ is equicontinuous. As an immediate corollary of Theorem \ref{t:Mackey-group-equi} we obtain the next assertion proved in \cite[Theorem~4.2(1)]{CMPT}.
\begin{corollary}[\cite{CMPT}] \label{c:g-barrelled-Mackey}
Every $g$-barrelled lqc abelian group is a Mackey group.
\end{corollary}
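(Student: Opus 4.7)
The plan is to deduce this immediately from Theorem \ref{t:Mackey-group-equi} together with the definition of $\Mac(G)$. By Theorem \ref{t:Mackey-group-equi}, an lqc abelian group $(G,\tau)$ is a Mackey group if and only if every $K\in\Mac(G)$ is equicontinuous. Hence it suffices to verify that, under the $g$-barrelled assumption, every element of $\Mac(G)$ is equicontinuous.

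First I would recall from Definition \ref{def:Mac(G)} that every $K\in\Mac(G)$ is, in particular, a quasi-convex subset of $\Gsa$ that is compact in $\sigma(\widehat{G},G)$. Next I would invoke the definition of $g$-barrelledness: every $\sigma(\widehat{G},G)$-compact subset of $\widehat{G}$ is equicontinuous. Applying this to an arbitrary $K\in\Mac(G)$ yields that $K$ is equicontinuous. Theorem \ref{t:Mackey-group-equi} then concludes that $(G,\tau)$ is a Mackey group, and moreover $\tau=\vee_{K\in\Mac(G)}\TTT_K$.

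There is no real obstacle here; the entire content has been absorbed into the description of $\Mac(G)$ (compact quasi-convex subsets) and the characterization of Mackey groups via equicontinuity. The argument is a one-line application once these two ingredients are in place.
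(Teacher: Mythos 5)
Your argument is correct and is exactly the route the paper takes: it presents this as an immediate consequence of Theorem \ref{t:Mackey-group-equi}, since every $K\in\Mac(G)$ is by Definition \ref{def:Mac(G)} a $\sigma(\widehat{G},G)$-compact subset of $\widehat{G}$ and hence equicontinuous under the $g$-barrelledness hypothesis. No gaps.
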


\begin{remark}
We noticed in Remark 6.4 of \cite{Gab-Respected} that there are Mackey groups $G$ which are not  $c_0$-barrelled. Therefore the family $\Mac(G)$ is smaller than the family of all $\sigma(\widehat{G},G)$-compact subsets of $\widehat{G}$.\qed
\end{remark}

Let us note the following interesting result.
\begin{proposition}[\cite{BTAVM}] \label{p:bounded-Mackey-precom}
Let $(G,\tau)$ be an lqc abelian group of finite exponent such that $|\widehat{G}|<\mathfrak{c}$. Then $G$ is a Mackey group.
\end{proposition}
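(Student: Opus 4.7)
The plan is to invoke Theorem \ref{t:Mackey-group-equi}, which reduces the task to showing that every $K\in\Mac(G)$ is equicontinuous in $(G,\tau)$. Since finite subsets of $\widehat{G}$ are always equicontinuous by Proposition \ref{p:Mac-her}(iv), it will suffice to prove that each such $K$ is \emph{finite}. Fix $K\in\Mac(G)$; quasi-convexity forces $K$ to be symmetric and to contain $0$.

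Let $n$ be the exponent of $G$. Then $\widehat{G}$ has exponent dividing $n$, so every character takes values in the discrete cyclic group $C_n$ of $n$-th roots of unity. Consequently the weak$^\ast$ topology on $\widehat{G}$ coincides with the topology of pointwise convergence into discrete $C_n$, and $G^\ast=\widehat{G_d}$ is a profinite (hence compact and zero-dimensional) group with a base of open subgroups at $0$. By Lemma \ref{l:polar-1}(vi) applied inside $G^\ast$, the set
\[
X:=\bigcup_{m\in\NN}\qc_{G^\ast}\big((m)K\big)=\bigcup_{m\in\NN}\big((m)K\big)^{\triangleleft\blacktriangleright}
\]
is a subgroup of $G^\ast$, and by the very definition of $\Mac(G)$ each $\big((m)K\big)^{\triangleleft\blacktriangleright}$ lies in $\widehat{G}$. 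Hence $X\subseteq\widehat{G}$ and $|X|\leq|\widehat{G}|<\mathfrak{c}$.

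The heart of the proof will be a Baire category argument applied to the closure $\overline{X}$ of $X$ in $G^\ast$. As a closed subgroup of a profinite group, $\overline{X}$ is itself compact, zero-dimensional, and Baire. The closed sets $\qc_{G^\ast}\big((m)K\big)\cap\overline{X}$ cover $\overline{X}$, so Baire gives an index $m_0$ for which this set has non-empty interior. Translating by a point inside the interior and using $\qc_{G^\ast}\big((m_0)K\big)+\qc_{G^\ast}\big((m_0)K\big)\subseteq\qc_{G^\ast}\big((2m_0)K\big)$ from Lemma \ref{l:polar-1}(vi), one obtains a neighbourhood of $0$ in $\overline{X}$ contained in $\qc_{G^\ast}\big((2m_0)K\big)$. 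Since $\overline{X}$ is profinite, this neighbourhood contains an open subgroup $U$, necessarily of finite index in the compact group $\overline{X}$. Choosing coset representatives $\chi_1,\dots,\chi_k\in X$ (possible because $X$ is dense in $\overline{X}$ and the cosets of $U$ are open), each $\chi_i$ lies in some $\qc_{G^\ast}\big((m_i)K\big)$, and a final invocation of Lemma \ref{l:polar-1}(vi) produces an integer $M$ with
\[
\overline{X}\;=\;\bigcup_{i=1}^{k}(\chi_i+U)\;\subseteq\;\qc_{G^\ast}\big((M)K\big)\;\subseteq\;\widehat{G}.
\]
Thus $|\overline{X}|\leq|\widehat{G}|<\mathfrak{c}$.

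The last step uses the classical dichotomy that a compact Hausdorff topological group which is not finite must contain a copy of the Cantor set, and therefore has cardinality at least $\mathfrak{c}$. Combined with $|\overline{X}|<\mathfrak{c}$, this forces $\overline{X}$, and hence $K\subseteq X\subseteq\overline{X}$, to be finite, completing the proof. The principal obstacle is the Baire step of the third paragraph: only once $\overline{X}$ has been shown to remain inside $\widehat{G}$ can the hypothesis $|\widehat{G}|<\mathfrak{c}$ be brought to bear. Everything else is a routine application of the polar calculus of Lemma \ref{l:polar-1} together with Theorem \ref{t:Mackey-group-equi}.
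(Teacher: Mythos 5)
Your overall strategy -- reduce via Theorem \ref{t:Mackey-group-equi} to showing each $K\in\Mac(G)$ is equicontinuous, prove it is in fact \emph{finite}, and finish with the fact that an infinite compact Hausdorff group has cardinality at least $\mathfrak{c}$ -- is exactly the paper's endgame. But the middle of your argument has a genuine flaw. You apply the Baire category theorem to $\overline{X}$ using the closed sets $\qc_{G^\ast}\big((m)K\big)\cap\overline{X}$, asserting that they cover $\overline{X}$. They do not: their union is $X$ itself, which is merely \emph{dense} in $\overline{X}$. A dense $F_\sigma$ subset of a Baire space can perfectly well be meager (think of $\Q$ in $\RR$), so you cannot conclude that some $\qc_{G^\ast}\big((m_0)K\big)$ has non-empty interior in $\overline{X}$. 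Everything downstream of that step (the open subgroup $U$, the finitely many coset representatives, the inclusion $\overline{X}\subseteq\qc_{G^\ast}\big((M)K\big)\subseteq\widehat{G}$) therefore rests on an unjustified premise. A telling symptom is that your argument uses the hypothesis $\exp(G)<\infty$ only to make $G^\ast$ profinite, whereas that hypothesis must do real work.

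The paper's proof shows where the finite exponent actually enters, and in doing so makes the whole closure-and-Baire apparatus unnecessary: for any fixed $n>\exp(G)$ one has $\big((n)K\big)^{\triangleleft}=\langle K\rangle^{\top}$, because $g\in\big((n)K\big)^{\triangleleft}$ forces $(\chi,g)^i\in\Ss_+$ for all $i$ up to $o(g)$, which for a root of unity of order less than $n$ is only possible when $(\chi,g)=1$. Hence $\qc_{\widehat{G}}\big((n)K\big)=\langle K\rangle^{\top\perp}$ is already a \emph{subgroup}, your increasing union $X$ stabilizes at this stage, and by Proposition \ref{p:Mac-her}(ii) this subgroup lies in $\Mac(G)$ and is therefore weak${}^\ast$ compact with no closure needed. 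The cardinality dichotomy then applies directly. If you replace your third paragraph by this stabilization observation, the rest of your write-up (the reduction, the inclusion in $\widehat{G}$, and the final dichotomy) goes through.
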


\begin{proof}
By Theorem \ref{t:Mackey-group-equi}, we have to show that every $K\in\Mac(G)$ is equicontinuous. So, fix $K\in \Mac(G)$ and a natural number $n> \exp(G)$.

We claim that $\big( (n)K\big)^{\triangleleft}=\langle K\rangle^\top$. Indeed, let $g\in \big( (n)K\big)^{\triangleleft}$. Since $n> \exp(G)$ and $0\in K$,  we obtain
\[
\big( i\chi,g\big)=\big( \chi,g\big)^i\in \Ss_+ \; \mbox{ for all $\chi\in K$ and $i=1,\dots,o(g)$},
\]
that is possible if and only if $(\chi,g)=1$. Thus $g\in\langle K\rangle^\top$ and hence   $\big( (n)K\big)^{\triangleleft}\subseteq \langle K\rangle^\top $. The inverse inclusion holds trivially. The claim is proved.

By Proposition \ref{p:Mac-her}(ii), $\qc_{\widehat{G}}\big( (n)K\big) =\big( (n)K\big)^{\triangleleft\triangleright}\in\Mac(G)$. Therefore, by the claim, the set $Y:=\big( (n)K\big)^{\triangleleft\triangleright}=\langle K\rangle^{\top\perp}$ is a weak${}^\ast$ compact subgroup of $\widehat{G}$.
It is well known that every infinite compact group has size $\geq\mathfrak{c}$ and since $|\widehat{G}|<\mathfrak{c}$, we obtain that $\langle K\rangle^{\top\perp}$ and hence also $K$ are finite, and therefore $K$ is equicontinuous.\qed
\end{proof}

The next theorem gives an answer to Problem \ref{prob:product-Mackey} for the case of two groups.
\begin{theorem} \label{t:Mackey-product}
For lqc abelian groups $G$ and $H$ the following assertions are equivalent:
\begin{enumerate}
\item[{\rm(i)}] the product $G\times H$ is a Mackey group;
\item[{\rm(ii)}] for every $S\in \Mac(G\times H)$, the projections $K:=\pi_{\widehat{G}}(S)$ and $C:=\pi_{\widehat{H}}(S)$ of $S$ onto $\widehat{G}$ and $\widehat{H}$ are equicontinuous;
\item[{\rm(iii)}] $G$ and $H$ are Mackey groups and $\Mac(G)\times \Mac(H)$ swallows $\Mac(G\times H)$.
\end{enumerate}
\end{theorem}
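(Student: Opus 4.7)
The plan is to establish the cycle (i)$\Rightarrow$(ii)$\Rightarrow$(iii)$\Rightarrow$(i), the common thread being Theorem \ref{t:Mackey-group-equi}, which identifies the Mackey property with equicontinuity of every element of $\Mac$.

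For (i)$\Rightarrow$(ii), take $S\in\Mac(G\times H)$. By Theorem \ref{t:Mackey-group-equi}, $S$ is equicontinuous in $G\times H$, so $S\subseteq(U\times V)^{\triangleright}$ for some $U\in\Nn(G)$ and $V\in\Nn(H)$. For any $(\chi,\eta)\in S$, setting $h=0$ in the defining inequality $(\chi,g)(\eta,h)\in\Ss_+$ forces $(\chi,g)\in\Ss_+$ for all $g\in U$, i.e.\ $\chi\in U^{\triangleright}$. Hence $K=\pi_{\widehat{G}}(S)\subseteq U^{\triangleright}$ is equicontinuous, and symmetrically $C\subseteq V^{\triangleright}$ is equicontinuous.

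For (ii)$\Rightarrow$(iii), observe first that $\{0\}\in\Mac(H)$: it is trivially quasi-convex and compact, and for finite $F\ni 0$ in $\widehat{H}$ the polar $F^{\triangleleft}$ is a $\sigma(H,\widehat{H})$-neighborhood of zero by Proposition \ref{p:basis-weak}(i), so $F^{\triangleleft\blacktriangleright}\subseteq\widehat{H}$ by Proposition \ref{p:basis-weak}(ii). Thus for any $K\in\Mac(G)$, Proposition \ref{p:Mac-product} yields $K\times\{0\}\in\Mac(G\times H)$, and (ii) forces its first projection $K$ to be equicontinuous; by Theorem \ref{t:Mackey-group-equi} this means $G$ is Mackey, and an identical argument handles $H$. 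For the swallowing claim, given $S\in\Mac(G\times H)$, the projections $K,C$ are equicontinuous by (ii), so Proposition \ref{p:Mac-her}(iv) gives $\qc_{\widehat{G}}(K)\in\Mac(G)$ and $\qc_{\widehat{H}}(C)\in\Mac(H)$, and the containment $S\subseteq K\times C\subseteq\qc_{\widehat{G}}(K)\times\qc_{\widehat{H}}(C)$ is the required witness of swallowing.

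For (iii)$\Rightarrow$(i), take $S\in\Mac(G\times H)$; the swallowing hypothesis supplies $A\in\Mac(G)$ and $B\in\Mac(H)$ with $S\subseteq A\times B$, and the Mackey property of $G$ and $H$ combined with Theorem \ref{t:Mackey-group-equi} makes $A^{\triangleleft}$ and $B^{\triangleleft}$ closed neighborhoods of zero in $G$ and $H$. The main obstacle is that $\Ss_+\cdot\Ss_+\not\subseteq\Ss_+$, so $A^{\triangleleft}\times B^{\triangleleft}$ does not directly witness equicontinuity of $A\times B$. The remedy is to halve: set $U:=(A^{\triangleleft})_{(2)}=\bigl((2)A\bigr)^{\triangleleft}$ and $V:=(B^{\triangleleft})_{(2)}=\bigl((2)B\bigr)^{\triangleleft}$ (Lemma \ref{l:polar-1}(vii)); these remain closed neighborhoods of zero by the property of $(\cdot)_{(n)}$ recalled in Section \ref{sec:-polar}. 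By Lemma \ref{l:polar-1}(i), $(\chi,g)\in\Ss_2$ and $(\eta,h)\in\Ss_2$ for all $(\chi,\eta)\in A\times B$ and $(g,h)\in U\times V$, whence $\bigl((\chi,\eta),(g,h)\bigr)=(\chi,g)(\eta,h)\in\Ss_2\cdot\Ss_2=\Ss_+$. Consequently $A\times B\subseteq(U\times V)^{\triangleright}$, so $A\times B$ and its subset $S$ are equicontinuous in $G\times H$, and Theorem \ref{t:Mackey-group-equi} concludes that $G\times H$ is Mackey.
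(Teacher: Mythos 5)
Your proposal is correct and follows essentially the same route as the paper: the cycle (i)$\Rightarrow$(ii)$\Rightarrow$(iii)$\Rightarrow$(i) driven by Theorem \ref{t:Mackey-group-equi}, with Proposition \ref{p:Mac-product} applied to $K\times\{0\}$ and Proposition \ref{p:Mac-her}(iv) for the swallowing claim. The only difference is that you explicitly verify two details the paper leaves implicit, namely that $\{0\}\in\Mac(H)$ and that the product $A\times B$ of two equicontinuous sets is equicontinuous (via the halving $((2)A)^{\triangleleft}\times((2)B)^{\triangleleft}$), both of which are correct.
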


\begin{proof}
(i)$\Rightarrow$(ii) Let $S\in \Mac(G\times H)$. By Theorem \ref{t:Mackey-group-equi}, $S$ is equicontinuous. Take  $U\times V\in\Nn(G\times H)=\Nn(G)\times \Nn(H)$ such that $(\chi,(g,h))\in \Ss_+$ for every $(g,h)\in U\times V$ and each $\chi=(\eta,\xi)\in S$. Now, for every $g\in U$ and each $\eta\in K$, choose $\xi\in C$ such that $\chi:=(\eta,\xi)\in S$ and then
\[
(\eta,g)=(\eta,g)\cdot(\xi,0)=(\chi,(g,0))\in \Ss_+.
\]
Therefore, by Proposition \ref{p:Mackey-equi}, $K$ is equicontinuous. Analogously one can prove that $C$ is equicontinuous as well.



(ii)$\Rightarrow$(iii) To show that $G$ is a Mackey group, let $K\in \Mac(G)$. Then, by Proposition \ref{p:Mac-product}, $K\times\{0\}\in \Mac(G\times H)$. So, by assumption, $K=\pi_{\widehat{G}}(K\times\{0\})$ is equicontinuous. Thus, by Theorem \ref{t:Mackey-group-equi}, the group $G$ is Mackey. Analogously, one can prove that $H$ is a Mackey group.

To show that $\Mac(G)\times \Mac(H)$ swallows $\Mac(G\times H)$, fix $S\in \Mac(G\times H)$. Then, by (ii), the projections $K:=\pi_{\widehat{G}}(S)$ and $C:=\pi_{\widehat{H}}(S)$ of $S$ onto $\widehat{G}$ and $\widehat{H}$ are equicontinuous. Therefore, by (iv) of Proposition \ref{p:Mac-her}, $\qc_{\widehat{G}}(K)\in\Mac(G)$ and $\qc_{\widehat{H}}(C)\in\Mac(H)$. It remains to note that $S\subseteq \qc_{\widehat{G}}(K)\times \qc_{\widehat{H}}(C)$.

(iii)$\Rightarrow$(i) Let  $S\in \Mac(G\times H)$. Take $K\in\Mac(G)$ and $C\in\Mac(H)$ such that $S\subseteq K\times C$. Since $G$ and $H$ are Mackey groups, $K$ and $C$ are equicontinuous by Theorem \ref{t:Mackey-group-equi}. Therefore $K\times C$ and hence also $S$ are equicontinuous. Once more applying Theorem \ref{t:Mackey-group-equi} we obtain that $G\times H$ is a Mackey group.\qed
\end{proof}
We do not know whether the condition  in Theorem \ref{t:Mackey-product} that $\Mac(G)\times \Mac(H)$ swallows $\Mac(G\times H)$ can be omitted, in other words whether the following problem firstly selected by E.~Mart\'{\i}n Peinador has an affirmative answer:
\begin{problem} \label{prob:product-Mackey-MP}
Is the product of two Mackey groups Mackey?
\end{problem}

It is well known \cite[Proposition~8.5.8]{Jar} that the completion of a Mackey lcs is a Mackey space.
\begin{problem}[\cite{AD-mackey}] \label{prob:completion-Mackey}
Is it true that the completion of a Mackey group is a Mackey group?
\end{problem}

Although we do not know an answer to Problem \ref{prob:completion-Mackey}, nevertheless we are able to clarify a little this problem in the next assertion.
\begin{proposition} \label{p:Mackey-subgroup-dense}
Let $H$ be a dense subgroup of an lqc abelian group $(G,\tau)$ such that $(H,\tau{\restriction}_H)$ is a Mackey group. Then $G$ is a Mackey group if and only if $H$ is dense in $(G,\nu)$ for every group topology $\nu\in \mathcal{C}(G,\tau)$.
\end{proposition}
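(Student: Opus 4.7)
The plan is to prove both implications through the following reduction principle: whenever $\nu\in\mathcal{C}(G,\tau)$ makes $H$ dense in $(G,\nu)$, the restriction $\nu{\restriction}_H$ is an lqc group topology on $H$ compatible with $\tau{\restriction}_H$. The forward direction is then immediate, since a Mackey $G$ forces $\nu\leq\tau$ for every $\nu\in\mathcal{C}(G,\tau)$, and so any $\tau$-dense set, in particular $H$, remains $\nu$-dense.

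For the converse I fix $\nu\in\mathcal{C}(G,\tau)$ with $H$ dense in $(G,\nu)$ and aim at $\nu\leq\tau$. First I verify the reduction principle. Local quasi-convexity of $\nu{\restriction}_H$ follows from Lemma \ref{l:polar-1}(x), which ensures that the intersection with $H$ of a $\nu$-quasi-convex neighborhood of zero is quasi-convex in $(H,\nu{\restriction}_H)$. For the compatibility $\widehat{(H,\nu{\restriction}_H)}=\widehat{(H,\tau{\restriction}_H)}$, I exploit density twice: because $H$ is $\nu$-dense in $G$, the $\nu{\restriction}_H$-completion of $H$ coincides with the $\nu$-completion of $G$, so every $\nu{\restriction}_H$-continuous character on $H$ extends uniquely to a $\nu$-continuous character on $G$; the same argument works for $\tau$. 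Combined with the hypothesis $\widehat{(G,\nu)}=\widehat{(G,\tau)}$ this yields the desired equality on $H$.

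Since $(H,\tau{\restriction}_H)$ is Mackey, the reduction principle forces $\nu{\restriction}_H\leq\tau{\restriction}_H$. To lift this inequality back to $G$, pick an arbitrary $V\in\Nn_{qc}(G,\nu)$; then $V\cap H$ is a $\tau{\restriction}_H$-neighborhood of zero, so there is $U\in\Nn(G,\tau)$ with $U\cap H\subseteq V\cap H\subseteq V$. As $V$ is quasi-convex in $(G,\nu)$ it is $\sigma(G,\widehat{G})$-closed and hence $\tau$-closed. Density of $H$ in $(G,\tau)$ combined with $\tau$-openness of $U$ gives $U\subseteq \cl_\tau(U\cap H)\subseteq V$, so $V\in\Nn(G,\tau)$. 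Running through a quasi-convex base of $(G,\nu)$ yields $\nu\leq\tau$, and hence $G$ is a Mackey group.

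The main obstacle is the character-extension point inside the reduction principle: one must justify that a $\nu{\restriction}_H$-continuous character on $H$ genuinely comes from a $\nu$-continuous character on $G$, which rests on the density of $H$ in $(G,\nu)$ via a completion argument (uniform continuity of characters into $\Ss$ and agreement of the two completions). Once this is granted, the rest is a routine polar manipulation using only the fact that quasi-convex sets are $\sigma(G,\widehat{G})$-closed.
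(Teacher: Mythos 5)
Your proposal is correct and follows essentially the same route as the paper: the forward direction is the same triviality, and the converse proceeds by showing $\nu{\restriction}_H$ is lqc and compatible with $\tau{\restriction}_H$ via the dense-subgroup identification of dual groups, invoking the Mackey property of $H$, and then lifting $\nu{\restriction}_H\leq\tau{\restriction}_H$ back to $G$ using $\tau$-density of $H$ together with the $\sigma(G,\widehat{G})$-closedness of quasi-convex $\nu$-neighborhoods. The only cosmetic difference is in the last step, where the paper runs the same density argument through the bipolar identity $(V\cap H)^{\triangleright}=V^{\triangleright}$ instead of citing the $\tau$-closedness of $V$ directly.
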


\begin{proof}
If $G$ is a Mackey group and $\nu\in \mathcal{C}(G,\tau)$, then $\nu\leq \tau$ and hence $H$ is also dense in $(G,\nu)$. Conversely, assume that $H$ is dense in $(G,\nu)$ for every $\nu\in \mathcal{C}(G,\tau)$. Then
\[
\widehat{(H,\nu{\restriction}_H})=\widehat{(G,\nu)}=\widehat{G}=\widehat{(H,\tau{\restriction}_H)},
\]
and hence $\nu{\restriction}_H$ is compatible with $\tau{\restriction}_H$. As $(H,\tau{\restriction}_H)$ is Mackey we obtain $\nu{\restriction}_H\leq\tau{\restriction}_H$. Let $W\subseteq G$ be a $\nu$-neighborhood of zero. Since $(G,\nu)$ is locally quasi-convex, we can choose a $\nu$-open neighborhood $V$ of zero in $G$  such that $V^{\triangleright\triangleleft}\subseteq W$. Then $V\cap H\in \nu{\restriction}_H\leq\tau{\restriction}_H$. Observe also that since $V\cap H$ is $\nu$-dense in $V$, we have $\big(V\cap H)^{\triangleright}=V^\triangleright$. Therefore
$
\overline{V\cap H}^{\,\tau} \subseteq \big(V\cap H)^{\triangleright\triangleleft} =  V^{\triangleright\triangleleft}\subseteq W.
$
Since $\overline{V\cap H}^{\,\tau}$ is a $\tau$-neighborhood of zero in $G$, it follows that $\nu\leq \tau$. Thus $\tau$ is a Mackey topology on $G$.\qed
\end{proof}

We finish this section by some interesting application of the obtained results.
If $(G,\tau)$ is a nuclear group (for the definition of nuclear groups we refer the reader to \cite{Ban}), we shall say that the topology $\tau$ is {\em nuclear}. Denote by $\Nuc$ the class of all nuclear groups.

\begin{proposition}[\cite{Ban}] \label{p:Nuclear-prop}
{\rm(i)}  Every LCA group is nuclear.

{\rm(ii)} Each subgroup of a nuclear group $G$ is dually embedded in $G$.

{\rm(iii)} $\Nuc$ is closed under taking products, Hausdorff quotients and subgroups.
\end{proposition}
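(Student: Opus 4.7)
Since this proposition collects several foundational facts about nuclear groups from Banaszczyk's monograph \cite{Ban}, the plan is to indicate how each clause is established there rather than to reproduce the technical machinery.

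For clause (i), the plan is to invoke the structure theorem for locally compact abelian groups: every LCA group $G$ contains an open subgroup of the form $\RR^n\times C$ with $C$ compact, and $G$ itself is an extension of a discrete group by such a subgroup. Granting the permanence properties in (iii), it then suffices to verify nuclearity on the building blocks. Finite-dimensional vector groups $\RR^n$ are nuclear because every neighborhood of zero contains, up to a scalar, an ellipsoid whose polar has the required summability properties (inherited from nuclearity of finite-dimensional normed spaces). Compact groups $C$ are nuclear because their characters separate points and the polars of neighborhoods of zero are finite subsets of $\widehat{C}$, trivially satisfying the nuclear conditions. Discrete abelian groups are nuclear for a dual reason. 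Assembling these via extensions yields (i).

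For clause (ii), the plan is to apply Banaszczyk's Hahn--Banach type extension theorem for nuclear groups: given a nuclear group $G$, a subgroup $H\le G$, and $\chi\in\widehat{H}$, one produces $\tilde\chi\in\widehat{G}$ extending $\chi$ by successively lifting $\chi$ through a sequence of neighborhoods $U_1\supseteq U_2\supseteq\cdots$ of zero in $G$ whose polars satisfy the nuclearity estimates. The extension is built by a compactness/diagonal argument on $\widehat{G}_{\sigma^\ast}$, using that the polars $U_n^{\triangleright}$ are weak$^\ast$-compact (Proposition \ref{p:polar-U-k-compact}) and that the nuclear conditions allow one to control the extension uniformly on finite subsets of $H$. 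This is genuinely the hard step and is the technical heart of Banaszczyk's theory; the key analytic input is a group-theoretic analogue of the fact that nuclear operators factor through Hilbert spaces.

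For clause (iii), the plan is routine verification from the definition of nuclearity. Products: a basic neighborhood in $\prod_i G_i$ is a finite box $\prod_i U_i$ with $U_i=G_i$ for all but finitely many $i$; its polar splits as a finite sum and the nuclear estimates multiply. Subgroups: if $H\le G$ and $U$ is a nuclear neighborhood of zero in $G$, then $U\cap H$ is a nuclear neighborhood in $H$, since the polar in $\widehat{H}$ is the image under restriction of the polar in $\widehat{G}$, and (ii) guarantees surjectivity of this restriction. Hausdorff quotients: given a quotient map $p:G\to G/N$, the image $p(U)$ of a nuclear neighborhood is a neighborhood in $G/N$, and its polar sits inside $\widehat{G/N}=N^\perp\subseteq\widehat{G}$, where the nuclear estimates transfer through the adjoint $p^\ast$ (Lemma \ref{l:adjoint}). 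The only delicate point is the quotient case, which requires that the nuclear summability conditions descend through $p^\ast$; this is handled in \cite{Ban} by a direct check on the factorization of nuclear operators through the quotient. \qed
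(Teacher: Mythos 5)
The paper gives no proof of this proposition: it is stated purely as a citation to Banaszczyk's monograph \cite{Ban}, so there is no internal argument to compare yours against. Your decision to sketch how the facts are established in \cite{Ban} is reasonable in spirit, and the broad architecture you describe for (i) (structure theory of LCA groups plus permanence properties) and for (ii) (a genuinely hard Hahn--Banach-type extension theorem, which is indeed the technical heart of the theory) matches the source.

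Two points in your sketch are nevertheless off. First, you repeatedly describe nuclearity as a condition on polars of neighborhoods of zero (``summability properties of the polar'', ``the nuclear estimates multiply'', ``polars of neighborhoods of zero are finite subsets of $\widehat{C}$, trivially satisfying the nuclear conditions''). That is not Banaszczyk's definition: a group is nuclear when every neighborhood of zero can be approximated through a homomorphism from a subgroup of an auxiliary vector space carrying a pair of symmetric convex sets whose Kolmogorov diameters decay faster than any prescribed power. The verifications in (iii) do not go through polars at all; they are direct manipulations of these witnessing data (restrict the homomorphism for subgroups, compose with the quotient map for quotients, combine the data coordinatewise for products). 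In particular, your appeal to (ii) to get surjectivity of the restriction map $\widehat{G}\to\widehat{H}$ inside the proof of closure under subgroups neither reflects nor would reconstruct the actual argument. Second, in (i) you ``assemble via extensions'', but closure of $\Nuc$ under extensions is \emph{not} among the permanence properties in (iii) and is a separate, nontrivial matter (it is not proved in \cite{Ban}; see \cite{Aus}). What saves the argument is that the subgroup $\RR^n\times C$ produced by the structure theorem is \emph{open} and nuclearity is a local condition at zero, so a group with an open nuclear subgroup is automatically nuclear; you should say this instead of invoking general extensions. Likewise, the standard route to nuclearity of a compact group $C$ is not ``finite polars'' but the embedding $C\hookrightarrow \TT^{I}$ combined with (iii) and the nuclearity of $\RR$ (hence of $\TT=\RR/\ZZ$ as a quotient).
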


Recall that a group topology $\tau$ on an abelian group $G$ is called {\em linear} if it admits a neighborhood basis at zero consisting of subgroups.
It is known (see \cite{AG}) that every linear abelian group embeds into the product of a family of discrete abelian groups, and hence every linear abelian group is nuclear.

The next theorem is one of the main results of \cite{Aus-Dikr-boun}.

\begin{theorem}[\cite{Aus-Dikr-boun}] \label{t:Mackey-finite-expon}
Every lqc abelian group $(G,\tau)$ of finite exponent has a Mackey group topology.
\end{theorem}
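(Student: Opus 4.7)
The plan is to invoke Theorem~\ref{t:Mackey-exist}(v): it suffices to prove that for every pair $K_1,K_2\in\Mac(G)$ the diagonal $\Delta=\{(g,g):g\in G\}$ is dually embedded in the product $L:=(G,\TTT_{K_1})\times(G,\TTT_{K_2})$.

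The finite-exponent hypothesis enters through the following structural observation, which I would establish first by the argument already used in the proof of Proposition~\ref{p:bounded-Mackey-precom}. Setting $e:=\exp(G)$ one has $\exp(\widehat G)\le e$; hence for any quasi-convex $K\subseteq\widehat G$ with $0\in K$ and any integer $n>e$, Lemma~\ref{l:polar-1}(i) forces $(\chi,g)=1$ whenever $\chi\in K$ and $g\in\bigl((n)K\bigr)^\triangleleft$, so $\bigl((n)K\bigr)^\triangleleft=K^\top$. The same argument applied to $(n)K+F$ gives $\bigl((n)K+F\bigr)^\triangleleft=K^\top\cap F^\triangleleft$ for every finite $0\in F\subseteq\widehat G$ and every $n>e$. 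Consequently, each $K_i^\top$ is a $\TTT_{K_i}$-open subgroup of $G$ on which $\TTT_{K_i}$ coincides with the precompact topology $\sigma(G,\widehat G){\restriction}_{K_i^\top}$, and therefore $L_0:=K_1^\top\times K_2^\top$ is an open subgroup of $L$ carrying a precompact abelian group topology.

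Now fix $\chi\in\widehat\Delta$ and extend it to $\widehat L$ in three moves. (a) Since $\Delta\cap L_0$ is a subgroup of the precompact group $L_0$, Proposition~\ref{p:precompact-dc-de} provides $\eta_0\in\widehat{L_0}$ with $\eta_0{\restriction}_{\Delta\cap L_0}=\chi{\restriction}_{\Delta\cap L_0}$. (b) Because $\Ss$ is divisible, $\eta_0$ admits an algebraic extension to a homomorphism $\eta:L\to\Ss$; the openness of $L_0$ in $L$ makes any such extension continuous (the preimage of a neighbourhood of $1\in\Ss$ contains the corresponding $L_0$-neighbourhood of $0$, which is automatically an $L$-neighbourhood), so $\eta\in\widehat L$. (c) The correction $\psi:=\chi-\eta{\restriction}_\Delta\in\widehat\Delta$ vanishes on $\Delta\cap L_0$, hence descends to a character of $\Delta/(\Delta\cap L_0)$, which embeds into the discrete abelian group $L/L_0$; extending this character to all of $L/L_0$ using divisibility of $\Ss$ and pulling back along the quotient map produces $\psi'\in\widehat L$ with $\psi'{\restriction}_\Delta=\psi$. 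Then $\eta+\psi'\in\widehat L$ restricts to $\chi$ on $\Delta$, establishing dual embeddedness of $\Delta$ in $L$; applying Theorem~\ref{t:Mackey-exist}(v) yields the Mackey group topology $\mu=\vee_{K\in\Mac(G)}\TTT_K$ on $G$. The main obstacle — and the reason the finite-exponent assumption seems indispensable in this approach — is producing an open precompact subgroup of $(G,\TTT_K)$: without the collapse $\bigl((n)K\bigr)^\triangleleft=K^\top$ for large $n$, the topology $\TTT_K$ need not be locally precompact and the three-step extension bootstrap has no engine.
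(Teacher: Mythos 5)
Your proof is correct, and although it shares the paper's skeleton---reducing via Theorem~\ref{t:Mackey-exist} (equivalently, Proposition~\ref{p:Mac(G)-nuclear}) to the dual embeddedness of the diagonal in $L=(G,\TTT_{K_1})\times(G,\TTT_{K_2})$---the engine you use for that step is genuinely different. The paper quotes Proposition~2.1 of \cite{AG} (every lqc group topology on a bounded abelian group is linear), concludes that each $(G,\TTT_{K_i})$ is nuclear, and then invokes Proposition~\ref{p:Nuclear-prop}: nuclearity is productive and every subgroup of a nuclear group is dually embedded. You instead exploit the finite exponent directly, via the same computation as in Proposition~\ref{p:bounded-Mackey-precom}, to get the collapse $\bigl((n)K_i+F\bigr)^\triangleleft=K_i^\top\cap F^\triangleleft$ for $n>\exp(G)$ (correct: by Lemma~\ref{l:polar-1}(i) a point of $\bigl((n)K_i\bigr)^\triangleleft$ pairs with each $\chi\in K_i$ into $\Ss_n$, and a nontrivial root of unity of order at most $\exp(G)<n$ cannot lie there). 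Hence $K_i^\top$ is a $\TTT_{K_i}$-open subgroup on which $\TTT_{K_i}$ agrees with the totally bounded topology $\sigma(G,\widehat{G})$, and $L_0=K_1^\top\times K_2^\top$ is an open precompact subgroup of $L$. Your three-step extension---extend over $L_0$ by Proposition~\ref{p:precompact-dc-de}, extend algebraically by divisibility of $\Ss$ with continuity coming from openness of $L_0$, then correct by a character of the discrete quotient $L/L_0$---is a standard but entirely self-contained replacement for the nuclearity machinery, and all of its steps check out. What the paper's route buys is brevity and a conceptual placement of the theorem inside nuclear group theory; what yours buys is independence from the external inputs \cite{AG} and \cite{Ban}, at the cost of carrying out the bootstrap explicitly.
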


\begin{proof}
By Theorem \ref{t:Mackey-exist}, it suffices to show that for every $K_1,K_2\in \Mac(G)$, the topology $\TTT_{K_1}\vee \TTT_{K_2}$ is compatible with $\tau$.
Since $\TTT_{K_1}$ and $\TTT_{K_2}$ are lqc group topologies and $G$ is of finite exponent,  Proposition 2.1 of \cite{AG} (which states that every lqc group topology on an abelian group of finite exponent is linear) implies that  the groups $(G,\TTT_{K_1})$ and $(G,\TTT_{K_2})$ are linear and hence they are nuclear groups. Thus, by Propositions \ref{p:Mac(G)-nuclear} and \ref{p:Nuclear-prop}, $\TTT_{K_1}\vee \TTT_{K_2}$ is compatible with $\tau$. \qed
\end{proof}
If $G$ is an infinite discrete abelian group,
we denote by $G^+$ the group $G$  endowed with the Bohr topology. Then $G^+$ is a precompact group and hence it is locally quasi-convex. Clearly, $G^+$ is not a Mackey group. Therefore, if additionally $G$ has finite exponent, the conclusion of Theorem \ref{t:Mackey-finite-expon} cannot be strengthen to ``is a Mackey group''. However, in the important case when the bounded group $(G,\tau)$ is reflexive, it is a Mackey group by Theorem 6.7 of \cite{Gab-Respected}.


\section{Mackey group problem for locally convex spaces} \label{sec:Mackey-group-lcs}


The most important and widely studied classes of locally convex spaces and relations between them are given in the following diagram (for definitions and proofs see the books \cite{Jar,kak})
\[
\xymatrix{
& & \mbox{bornological}  \ar@{=>}[rd] & & \\
\mbox{normed}  \ar@{=>}[r] & \mbox{metrizable}  \ar@{=>}[r] \ar@{=>}[ru] & \mbox{$b$-Baire-like}  \ar@{=>}[r] & {\substack{\mbox{quasi-} \\
\mbox{barrelled}}} \ar@{=>}[r] & {\substack{\mbox{Mackey} \\ \mbox{space}}}\\
\mbox{Banach}  \ar@{=>}[r] \ar@{=>}[u] & \mbox{Baire-like}  \ar@{=>}[r] \ar@{=>}[ru] & \mbox{barrelled} \ar@{=>}[ru]\ar@{=>}[d] \ar@{=>}[r] & \mbox{$g$-barrelled}\ar@{=>}[r]\ar@{=>}[d]  & {\substack{\mbox{Mackey} \\ \mbox{group}}} \ar@{=>}[u] \\
& & {\substack{\mbox{$c_0$-barrelled} \\ \mbox{space}}} \ar@{<=>}[r] \ar@/^1pc/[u]^{\mbox{\scriptsize +quasibarrelled}} & {\substack{\mbox{$c_0$-barrelled} \\ \mbox{group}}} &
}
\]
and none of these implications is reversible. Below we note and discuss only some related results concerning the $g$-barrelledness and the property of being a Mackey group.


\begin{remark} \label{rem:lcs-Mackey}
(i) In Remark~16 of \cite{CMPT}, it is stated that for a non-reflexive real Banach space  $E$, the space $\big(E',\mu(E',E)\big)$ is a $g$-barrelled lcs which  is not barrelled (where $\mu(E',E)$ is the Mackey topology on $E'$).
A similar example  with a detailed proof of a $g$-barrelled but not barrelled lcs is given in Example 5.6 
of \cite{Gab-Respected}.

(ii) The first example of a metrizable lcs $E$ which is not a Mackey group is given in Theorem 3.1 of \cite{Gab-Mackey}. Moreover, there are even normed spaces which are not Mackey groups, see \cite{Gabr-normed-Mackey}.

(iii) For a Tychonoff space $X$ we denote by $C_p(X)$ the space $C(X)$ of all real-valued continuous functions endowed with the pointwise topology. 
Although $C_p(X)$  being quasibarrelled is always a Mackey space, the main result of \cite{Gab-Cp} states that the $C_p(X)$ is a Mackey group if and only if it is barrelled.

(iv) If an lcs $E$ is a Mackey group, then it is a Mackey space by Lemma 2.3 
of \cite{Gabr-L(X)-Mackey}.

(v) If $E$ is an lcs, then $E$ is a $c_0$-barrelled space if and only if it is a $c_0$-barrelled group by Proposition 5.1(i) of \cite{Gab-Respected}. If additionally $E$ is a quasibarrelled space, then $E$  is $c_0$-barrelled if and only if $E$ is a barrelled space by Proposition 12.2.3 of \cite{Jar}.\qed
%
\end{remark}



The diagram and Remark \ref{rem:lcs-Mackey} suggest the following natural problem.
\begin{problem} \label{prob:normed-Mackey-group}
Does there exist a non-quasibarrelled $c_0$-barrelled space $E$ which is a Mackey group?
\end{problem}

 Let $\mathcal{E}$ be a class of Mackey locally convex spaces (for example,  $\mathcal{E}$ is a class from the first, second or forth lines in the above diagram).
Taking into account (ii) and (iv) of Remark \ref{rem:lcs-Mackey}, one can naturally to consider the next question which is a partial but more concrete case of Problem \ref{prob:lcs-Mackey-group}.
\begin{problem}\label{prob:lcs-Mackey-group-E}
Characterize those spaces $E\in \mathcal{E}$ which are Mackey groups.
\end{problem}

Below we answer  Problem \ref{prob:lcs-Mackey-group} and Problem \ref{prob:lcs-Mackey-group-E} in terms of the family $\Mac(E)$ and the family  $\mathcal{CAC}(E')$  of all $\sigma(E',E)$-compact absolutely convex subsets of $E'$. We start from the following relation between these families. Recall that $\psi:E'\to \widehat{E}, \psi(\chi):= e^{2\pi i \chi}$, denotes the canonical isomorphism of $E'$ onto $\widehat{E}$.
\begin{proposition} \label{p:lcs-Mac(E)}
Let $(E,\tau)$ be an locally convex space, and let $K\in \mathcal{CAC}(E')$. Then $\psi(K)\in\Mac(E)$.
\end{proposition}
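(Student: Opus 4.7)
The plan is to verify each of the three defining conditions in Definition \ref{def:Mac(G)} for $\psi(K)$: quasi-convexity, $\sigma(\widehat{E},E)$-compactness, and the polar-closure condition $\big((n)\psi(K)+F\big)^{\triangleleft\blacktriangleright}\subseteq\widehat{E}$. For the first two, I would exploit the fact that $K$ is $\sigma(E',E)$-closed (being compact in a Hausdorff space) and absolutely convex, so by the Bipolar theorem $K=B^\circ$ where $B:={}^{\circ}K$ is a closed absolutely convex subset of $E$. Lemma \ref{l:polar-neigh} then directly yields that $\psi(K)$ is quasi-convex. For compactness, the map $\psi:(E',\sigma(E',E))\to(\widehat{E},\sigma(\widehat{E},E))$ is continuous, since for each $x\in E$ the evaluation $\chi\mapsto\psi(\chi)(x)=e^{2\pi i\langle\chi,x\rangle}$ is $\sigma(E',E)$-continuous; hence $\psi(K)$ is $\sigma(\widehat{E},E)$-compact.

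The main step is the polar-closure condition. The key idea is to introduce the Mackey topology $\mu:=\mu(E,E')$ furnished by Theorem \ref{t:Mackey-Arens}. By the Mackey--Arens theorem, the set $B$ is a $\mu$-neighborhood of $0$, and the identity from Lemma \ref{l:polar-neigh} gives
\[
\big((2n)\psi(K)\big)^{\triangleleft}=\tfrac{1}{8n}B,
\]
which is therefore also a $\mu$-neighborhood of $0$. Since $F+F$ is a finite subset of $\widehat{E}=(E,\mu)^{\wedge}$ containing $0$ (because $\mu$ is compatible with $\tau$), its inverse polar $(F+F)^{\triangleleft}$ is likewise a $\mu$-neighborhood of $0$. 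Then Lemma \ref{l:polar-1}(iii), applied to the pair $(n)\psi(K)$ and $F$, yields
\[
\big((2n)\psi(K)\big)^{\triangleleft}\cap(F+F)^{\triangleleft}\subseteq\big((n)\psi(K)+F\big)^{\triangleleft},
\]
so $U:=\big((n)\psi(K)+F\big)^{\triangleleft}$ is a $\mu$-neighborhood of $0$.

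Finally, for any $\eta\in U^{\blacktriangleright}\subseteq E^{\ast}$ we have $\eta(U)\subseteq\Ss_+$ with $U$ being a $\mu$-neighborhood of $0$, so Proposition \ref{p:basis-weak}(ii) applied to $(E,\mu)$ gives that $\eta$ is $\mu$-continuous, hence $\eta\in(E,\mu)^{\wedge}=\widehat{E}$ by compatibility of $\mu$ with $\tau$. This closes the argument. The main technical obstacle is precisely the step where the abstract $\sigma(E',E)$-compactness of $K$ has to be turned into equicontinuity of something: this is impossible with respect to $\tau$ itself (since $\tau$ need not be Mackey), and the trick is to replace $\tau$ by $\mu$, exploiting the explicit polar-to-polar identity of Lemma \ref{l:polar-neigh} in tandem with the Mackey--Arens description of $\mu$-neighborhoods of $0$.
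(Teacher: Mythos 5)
Your proof is correct. The central device is the same as the paper's: both arguments pivot on the Mackey--Arens theorem, which makes the pre-polar $B={}^{\circ}K$ a neighbourhood of zero for the Mackey topology $\mu$, and on the compatibility of $\mu$ with $\tau$. The two surrounding technical steps, however, are carried out differently. To handle a general pair $(n,F)$, the paper first reduces to $n=1$, $F=\{0\}$ by replacing $(n)\psi(K)+F$ with $\psi(\widetilde K)$, where $\widetilde K$ is the absolutely convex hull of $(n)K+\psi^{-1}(F)$; this costs an appeal to the fact that such hulls stay $\sigma(E',E)$-compact (Theorem 4.4.4 of \cite{NaB}). You avoid the reduction entirely by showing directly, via Lemma \ref{l:polar-1}(iii) and the identities of Lemma \ref{l:polar-neigh}, that $\big((n)\psi(K)+F\big)^{\triangleleft}$ contains the $\mu$-neighbourhood $\tfrac{1}{8n}B\cap(F+F)^{\triangleleft}$. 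Then, to pass from ``$\eta$ maps a $\mu$-neighbourhood into $\Ss_+$'' to ``$\eta\in\widehat E$'', the paper lifts the character to a genuine linear functional using Lemma 2.2 of \cite{Ban} (which needs the barrel/absorbent structure of $\tfrac14K^{\circ}$) and checks that this functional is $\mu$-continuous, whereas you stay at the level of characters, invoking Proposition \ref{p:basis-weak}(ii) for the group $(E,\mu)$ and then Fact \ref{f:dual-E} applied to the lcs $(E,\mu)$ together with $(E,\mu)'=E'$ to identify $\widehat{(E,\mu)}$ with $\widehat E$. Your route is the more economical one inside the paper's own group-polar calculus and dispenses with two external citations; the paper's route has the virtue of exhibiting the linear functional underlying the character explicitly. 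The remaining steps (quasi-convexity via Lemma \ref{l:polar-neigh} after writing $K=B^{\circ}$ with $B={}^{\circ}K$, and weak${}^{*}$ compactness of $\psi(K)$) agree, except that the paper cites Proposition 2.11 of \cite{Gab-Respected} for the compactness where you give the direct continuity argument.
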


\begin{proof}
Proposition 2.11 of \cite{Gab-Respected} states that 
a subset $C$ of $E'$ is $\sigma(E',E)$-compact if and only if $\psi(C)$ is a $\sigma(\widehat{E},E)$-compact subset of $\widehat{E}$. Therefore, $\psi(K)$ is $\sigma(\widehat{E},E)$-compact. As $K$ is absolutely convex and closed, the set $\psi(K)$ is  quasi-convex by Lemma \ref{l:polar-neigh}.

To check that $\psi(K)\in\Mac(E)$, fix $n\in\NN$ and a finite $0\in F\subseteq \widehat{E}=\psi(E')$. We have to show that $\big( (n)\psi(K) +F\big)^{\triangleleft\blacktriangleright} \subseteq \widehat{E}$.
Observe that $(n)K=nK$ is absolutely convex, and hence, by Theorem 4.4.4 of \cite{NaB}, the absolutely convex hull $\widetilde{K}$ of $(n)K+\psi^{-1}(F)=\bigcup_{f\in \psi^{-1}(F)} (f+nK)$ is also $\sigma(E',E)$-compact. So $\widetilde{K}\in \mathcal{CAC}(E')$ and $\psi(\widetilde{K})$ is quasi-convex by Lemma \ref{l:polar-neigh}. Therefore, replacing $(n)\psi(K)+F$ by the bigger set $\psi(\widetilde{K})$ if needed,  we can assume that $n=1$ and $F=\{0\}$.

To show that $\psi(K)^{\triangleleft\blacktriangleright} \subseteq \widehat{E}$, fix an arbitrary $\chi\in \psi(K)^{\triangleleft\blacktriangleright}$.
By Lemma \ref{l:polar-neigh}, $\psi(K)^{\triangleleft}=\tfrac{1}{4} K^\circ$ is an absolutely convex, closed subset of $E$. By Theorem 8.8.4 of \cite{NaB},  $\psi(K)^{\triangleleft}$ is a barrel in $E$ and hence it is absorbent. Note that in the additive representation of $\Ss$ as the quotient group $\TT:=\IR/\ZZ$, the inclusion $\chi(A)\subseteq \Ss_+$ means that $\chi(A)\in \big[-\tfrac{1}{4},\tfrac{1}{4}\big]$. Therefore we can apply Lemma 2.2 of \cite{Ban} to find a (unique) linear functional $\eta$ on $E$ such that $\psi(\eta)=\chi$ and
\[
(\eta, x)\in \big[-\tfrac{1}{4};\tfrac{1}{4}\big] \;\; \mbox{ for every }\; x\in \tfrac{1}{4} K^\circ,
\]
i.e. $\eta(K^\circ)\subseteq [-1,1]$. Recall that, by the Mackey--Arens theorem, $K^\circ$ is a neighborhood of zero in the Mackey space topology $\mu$ of $E$. Therefore, $\eta$ is $\mu$-continuous. But since $\mu$ is compatible with $\tau$, it follows that $\eta$ is continuous. Thus $\chi=\psi(\eta)\in \widehat{E}$ and hence $\psi(K)^{\triangleleft\blacktriangleright} \subseteq \widehat{E}$, as desired. \qed
\end{proof}


\begin{theorem} \label{t:Mackey-space-group}
A  locally convex space  $(E,\mu)$ is a Mackey group if and only if it is a Mackey space and $\psi\big(\mathcal{CAC}(E')\big)$ swallows $\Mac(E)$.
\end{theorem}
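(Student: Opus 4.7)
The plan is to reduce everything to the characterization of Mackey groups by equicontinuity (Theorem~\ref{t:Mackey-group-equi}) and then translate between equicontinuity in $E'$ (as linear functionals) and equicontinuity in $\widehat{E}$ (as characters) via the canonical isomorphism $\psi$. So the central claim I need to extract from $\psi$ is that, for a subset $C\subseteq E'$, $C$ is equicontinuous in the lcs sense if and only if $\psi(C)$ is equicontinuous in the sense of Proposition~\ref{p:Mackey-equi}; this follows directly from the computation in Lemma~\ref{l:polar-neigh}, since if $U$ is a balanced neighborhood of zero witnessing $\langle C,U\rangle\subseteq[-\tfrac14,\tfrac14]$, then $\psi(C)(U)\subseteq\Ss_+$, and conversely if $\psi(C)(U)\subseteq\Ss_+$ for a balanced $U$ then by connectedness of $\langle\chi,U\rangle$ one obtains $\langle C,U\rangle\subseteq[-\tfrac14,\tfrac14]$.

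For the necessity, I would first invoke Remark~\ref{rem:lcs-Mackey}(iv) to get that $E$ is a Mackey space. Then, given an arbitrary $S\in\Mac(E)$, Theorem~\ref{t:Mackey-group-equi} yields a neighborhood $U\in\Nn(E,\mu)$ with $\chi(U)\subseteq\Ss_+$ for all $\chi\in S$. Translating via $\psi$, $\psi^{-1}(S)$ is an equicontinuous subset of $E'$, so by Alaoglu--Bourbaki it lies in a scalar multiple of the polar $U^\circ$, and $U^\circ\in\mathcal{CAC}(E')$ (independently of whether $E$ is Mackey). Thus there exists $K\in\mathcal{CAC}(E')$ with $\psi^{-1}(S)\subseteq K$, i.e.\ $S\subseteq\psi(K)$, proving that $\psi(\mathcal{CAC}(E'))$ swallows $\Mac(E)$.

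For the sufficiency, assume $(E,\mu)$ is a Mackey space and the swallowing condition holds. By Theorem~\ref{t:Mackey-group-equi} it suffices to show every $S\in\Mac(E)$ is equicontinuous. Pick $K\in\mathcal{CAC}(E')$ with $S\subseteq\psi(K)$. Since $E$ is a Mackey space, the Mackey--Arens theorem (Theorem~\ref{t:Mackey-Arens}(ii)) tells us that $K^\circ$ is a $\mu$-neighborhood of zero, so $K$ is equicontinuous in $E'$; by the $\psi$-translation above $\psi(K)$ is then equicontinuous in $\widehat{E}$, and a fortiori so is its subset $S$. The only mildly technical step is making the $\psi$-translation of equicontinuity precise, but this is essentially contained in Lemma~\ref{l:polar-neigh} and the balanced/linear argument above; once that is in hand the proof is a direct bookkeeping exercise combining Theorem~\ref{t:Mackey-group-equi}, Remark~\ref{rem:lcs-Mackey}(iv) and the Mackey--Arens theorem.
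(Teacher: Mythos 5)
Your proof is correct. The necessity direction is essentially the paper's own argument: Mackey space via \cite{Gabr-L(X)-Mackey}, equicontinuity of every $S\in\Mac(E)$ via Theorem~\ref{t:Mackey-group-equi}, and then locating $\psi^{-1}(S)$ inside a member of $\mathcal{CAC}(E')$ --- you do this with Alaoglu--Bourbaki applied to $U^\circ$, the paper quotes Theorem~\ref{t:Mackey-Arens}(ii) and the identity $(K^\circ)^{\triangleright}=\psi(\tfrac14 K)$ of Lemma~\ref{l:polar-neigh}; these are interchangeable. The sufficiency is where you genuinely diverge: the paper first observes that the swallowing hypothesis makes $\Mac(E)$ directed, invokes Theorem~\ref{t:Mackey-exist} to produce a Mackey group topology $\TTT\geq\mu$, and then pushes $\TTT\leq\mu$ by comparing basic neighborhoods $C^{\triangleleft}\supseteq\psi(K)^{\triangleleft}=\tfrac14 K^\circ\in\mu$. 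You instead verify the equicontinuity criterion of Theorem~\ref{t:Mackey-group-equi} directly: every $S\in\Mac(E)$ sits inside some $\psi(K)$ with $K\in\mathcal{CAC}(E')$, $K^\circ$ is a $\mu$-neighborhood because $\mu$ is the Mackey topology, and the $\psi$-translation of equicontinuity (which is exactly the content of Lemma~\ref{l:polar-neigh} plus the connectedness argument you sketch) finishes it. This is precisely the alternative route the paper itself records in Remark~\ref{rem:Mac-acx-compact}(ii) for the quasibarrelled case, adapted to use Mackey--Arens in place of quasibarrelledness; it is shorter and avoids Theorem~\ref{t:Mackey-exist} entirely, at the cost of not exhibiting the Mackey group topology as $\vee_{K\in\Mac(E)}\TTT_K$ along the way. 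Both arguments are sound.
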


\begin{proof}
Assume that  $(E,\mu)$ is a Mackey group. Then, by Lemma 2.3 of \cite{Gabr-L(X)-Mackey},  
$E$ is a Mackey space and, by Theorem \ref{t:Mackey-group-equi}, every $C\in\Mac(G)$ is equicontinuous. Therefore, by (ii) of Theorem \ref{t:Mackey-Arens}, there is $K\in \mathcal{CAC}(E')$ such that $C\subseteq (K^\circ)^\triangleright$. By Lemma \ref{l:polar-neigh}, we have $(K^\circ)^\triangleright =\psi(\tfrac{1}{4}K)$. Thus $\psi\big(\mathcal{CAC}(E')\big)$ swallows $\Mac(E)$.

Conversely, let  $(E,\mu)$ be a Mackey space and $\psi\big(\mathcal{CAC}(E')\big)$ swallow $\Mac(E)$. Since the family $\mathcal{CAC}(E')$ is directed, we obtain that the family $\Mac(E)$ is directed as well. Therefore, by Theorem \ref{t:Mackey-exist}, $E$ has a Mackey group topology $\TTT$. Since $\mu\leq\TTT$ holds always, we shall prove that $\mu\geq\TTT$. To this end, fix a $\TTT$-quasi-convex neighborhood $U$ of zero in $E$. By (iii) of Proposition \ref{p:Mac-her}, we can assume that $U=C^\triangleleft$ for some $C\in\Mac(E)$. By assumption, there is $K\in\mathcal{CAC}(E')$ such that $C\subseteq \psi(K)$. Then, by Lemma \ref{l:polar-neigh}, we have $\psi(K)^\triangleleft =\tfrac{1}{4} K^\circ \subseteq C^\triangleleft=U$. Since $\mu$ is a Mackey space topology, we have $\tfrac{1}{4} K^\circ \in\mu$. Therefore, $U$ is a $\mu$-neighborhood of zero. Thus $\TTT\leq \mu$ and hence $\TTT=\mu$.\qed
\end{proof}

\begin{corollary} \label{c:quasibar-Mackey-group}
A Mackey (in particular, quasibarrelled) locally convex space $E$ is a Mackey group if and only if $\psi\big(\mathcal{CAC}(E')\big)$ swallows $\Mac(E)$.
\end{corollary}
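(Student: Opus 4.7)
The plan is to deduce this corollary directly from Theorem \ref{t:Mackey-space-group}, since it is essentially the specialization of that theorem to the class of Mackey locally convex spaces. Concretely, Theorem \ref{t:Mackey-space-group} asserts that an lcs $(E,\mu)$ is a Mackey group if and only if two conditions hold simultaneously: (a) $E$ is a Mackey space, and (b) $\psi(\mathcal{CAC}(E'))$ swallows $\Mac(E)$. Under the standing hypothesis of the corollary, condition (a) is already given, so the equivalence collapses to just condition (b).

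The argument therefore proceeds in one short step: assume $E$ is a Mackey lcs; then $E$ is a Mackey group iff (by Theorem \ref{t:Mackey-space-group}) $E$ is Mackey \emph{and} $\psi(\mathcal{CAC}(E'))$ swallows $\Mac(E)$, which under our hypothesis is equivalent to the single condition that $\psi(\mathcal{CAC}(E'))$ swallows $\Mac(E)$. The parenthetical clause about quasibarrelled spaces reduces to pointing out that every quasibarrelled lcs is Mackey (as recorded in the diagram at the start of Section \ref{sec:Mackey-group-lcs}, and classically in \cite{Jar,kak}), hence the hypothesis of being Mackey is automatically satisfied for quasibarrelled $E$ and the characterization applies in that case too.

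There is no real obstacle here—the corollary is an immediate specialization of the main theorem of the section, and its only content beyond Theorem \ref{t:Mackey-space-group} is the observation that condition (a) in that theorem is redundant within the class of Mackey (in particular, quasibarrelled) spaces. Accordingly, in the write-up I would simply invoke Theorem \ref{t:Mackey-space-group} together with the remark that quasibarrelled $\Rightarrow$ Mackey, and close the proof; no further lemmas from the preliminaries are needed.
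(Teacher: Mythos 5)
Your proposal is correct and matches the paper's (implicit) argument exactly: the corollary is stated without proof precisely because it is the immediate specialization of Theorem \ref{t:Mackey-space-group} to the case where the Mackey-space hypothesis is already assumed, plus the standard fact that quasibarrelled implies Mackey. Nothing further is needed.
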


\begin{remark} \label{rem:Mac-acx-compact}
(i) There are metrizable spaces $E$ for which $\psi\big(\mathcal{CAC}(E')\big)$ does not swallow $\Mac(E)$. Indeed, by (iii) if Remark \ref{rem:lcs-Mackey}, the space $C_p(\mathbb{Q})$ is not a Mackey group. Thus, by Corollary \ref{c:quasibar-Mackey-group}, there is $K\in\Mac(C_p(\mathbb{Q}))$ which cannot be covered by any element of $\psi\big(\mathcal{CAC}(E')\big)$.

(ii) The sufficiency in Corollary \ref{c:quasibar-Mackey-group} can be proved in another way: If $\psi\big(\mathcal{CAC}(E')\big)$ swallows $\Mac(E)$, then every $K\in\Mac(E)$ is contained in some $\psi(C)$ with $C\in \mathcal{CAC}(E')$. Since $E$ is quasibarrelled, $C$ is equicontinuous by Theorems 11.11.5 and 11.11.4 of \cite{NaB}. Therefore, by (vi) of Proposition 2.11 of \cite{Gab-Respected}, $\psi(C)$ and hence also $K$ are also equicontinous. Finally, by Theorem \ref{t:Mackey-group-equi}, $E$ is a Mackey group.\qed 
\end{remark}

\begin{corollary} \label{c:normed-Mackey-group}
A normed space $E$ is a Mackey group if and only if for every $C\in\Mac(E)$, the compact set $\psi^{-1}(C)$ is norm bounded in the dual Banach space $E'$.
\end{corollary}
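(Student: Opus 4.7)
The plan is to derive this directly from Corollary \ref{c:quasibar-Mackey-group} after describing $\mathcal{CAC}(E')$ concretely in the normed case. Every normed space is metrizable, hence quasibarrelled and in particular a Mackey space, so Corollary \ref{c:quasibar-Mackey-group} tells me that $E$ is a Mackey group if and only if $\psi\big(\mathcal{CAC}(E')\big)$ swallows $\Mac(E)$. Moreover, every $C\in\Mac(E)$ is contained in $\widehat{E}=\psi(E')$, and since $\psi$ restricts to a topological isomorphism of $(E',\sigma(E',E))$ onto $\widehat{E}_{\sigma^\ast}$, the set $\psi^{-1}(C)\subseteq E'$ is $\sigma(E',E)$-compact; this is the ``compact set'' referred to in the statement.

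The key step is to establish the identification
\[
\mathcal{CAC}(E') = \{ K\subseteq E' : K \text{ is $\sigma(E',E)$-closed, absolutely convex and norm bounded}\}.
\]
For the nontrivial inclusion $\subseteq$, let $K\in\mathcal{CAC}(E')$; by the Mackey--Arens theorem (Theorem \ref{t:Mackey-Arens}(i)) the polar $K^\circ$ is a neighborhood of zero in the Mackey topology $\mu(E,E')$, which for a normed space coincides with the norm topology. Hence $K^\circ \supseteq rB_E$ for some $r>0$, and therefore $K\subseteq \tfrac{1}{r}B_{E'}$ is norm bounded. Conversely, if $K$ is norm bounded, say $K\subseteq rB_{E'}$, then $rB_{E'}$ is $\sigma(E',E)$-compact by the Banach--Alaoglu theorem (valid for every normed space), and any $\sigma(E',E)$-closed subset of $rB_{E'}$ is therefore $\sigma(E',E)$-compact.

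With this description in hand, the swallowing condition $\psi\big(\mathcal{CAC}(E')\big) \text{ swallows } \Mac(E)$ translates into: for every $C\in\Mac(E)$ there exists $K\in\mathcal{CAC}(E')$ with $\psi^{-1}(C)\subseteq K$, which in view of the identification above is equivalent to $\psi^{-1}(C)$ being norm bounded. Indeed, the forward direction is immediate from the norm boundedness of any $K\in\mathcal{CAC}(E')$; for the backward direction, if $\psi^{-1}(C)\subseteq rB_{E'}$, one may take $K:=rB_{E'}\in\mathcal{CAC}(E')$. Combining with Corollary \ref{c:quasibar-Mackey-group} finishes the proof.

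No step poses a real obstacle; the only substantive point is the normed-space identification of $\mathcal{CAC}(E')$, and that reduces to a single application each of Mackey--Arens and Banach--Alaoglu.
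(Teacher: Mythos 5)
Your proof is correct and follows essentially the same route as the paper: both directions reduce to Corollary \ref{c:quasibar-Mackey-group} together with the observation that for a normed space $\mathcal{CAC}(E')$ consists precisely of the norm-bounded, $\sigma(E',E)$-closed, absolutely convex sets --- the paper gets the norm-boundedness of $K\in\mathcal{CAC}(E')$ from quasibarrelledness (Theorem 11.11.5 of \cite{NaB}) and handles the converse via equicontinuity and Theorem \ref{t:Mackey-group-equi}, whereas you use Mackey--Arens and Banach--Alaoglu, which amounts to the same thing. One inaccurate aside: $\psi$ is \emph{not} a topological isomorphism of $(E',\sigma(E',E))$ onto $\widehat{E}_{\sigma^\ast}$ (already for $E=\IR$ the latter carries the strictly coarser Bohr-type topology); the correct statement, which is what you actually need and what the paper cites from Proposition 2.11 of \cite{Gab-Respected}, is that $\psi$ preserves compactness in both directions. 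Since the compactness of $\psi^{-1}(C)$ is only descriptive and plays no role in the equivalence, this does not affect your argument.
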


\begin{proof}
If $E$ is a Mackey group, then, by Corollary \ref{c:quasibar-Mackey-group}, there is $K\in \mathcal{CAC}(E')$ such that $\psi^{-1}(C)\subseteq K$. Since $E$ is quasibarrelled, by Theorem 11.11.5 of \cite{NaB}, $K$ is a bounded subset of the strong dual  space $E'_\beta$. It remains to note that  $E'_\beta$ is the Banach dual $E'$.

Conversely, if   for every $C\in\Mac(E)$, the compact set $\psi^{-1}(C)$ is bounded in the dual Banach space $E'$. Then $\psi^{-1}(C)$ is equicontinuous. Therefore, by (vi) of Proposition 2.11 of \cite{Gab-Respected}, $C$ is also equicontinous. Now Theorem \ref{t:Mackey-group-equi} applies to get that $E$ is a Mackey group.\qed
\end{proof}

\begin{corollary} \label{c:vector-Mackey-group}
An lcs $E$ admits a Mackey group topology $\mu$ which is a vector topology if and only if $\psi\big(\mathcal{CAC}(E')\big)$ swallows $\Mac(E)$. In this case $\mu$ is a Mackey space topology.
\end{corollary}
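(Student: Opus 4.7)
The plan is to reduce the corollary to Theorem \ref{t:Mackey-space-group} by showing that any Mackey group topology on $E$ that is also a vector topology must in fact coincide with the Mackey locally convex topology $\mu(E,E')$.

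For the sufficiency, take $\mu:=\mu(E,E')$. Since $\mu$ is compatible with the original topology both as a vector space and, via $\psi$, as an abelian group, Proposition \ref{p:Mac-duality} ensures that $\Mac(E)$ and the family $\mathcal{CAC}(E')$ are unchanged when the original topology is replaced by $\mu$. The space $(E,\mu)$ is a Mackey lcs by the Mackey--Arens theorem, so Theorem \ref{t:Mackey-space-group} immediately converts the swallowing hypothesis into the statement that $(E,\mu)$ is a Mackey group; this also settles the final clause of the corollary, namely that $\mu$ is a Mackey space topology.

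For the necessity, let $\mu$ be a Mackey group topology on $E$ that is also a vector topology and set $\tau_M:=\mu(E,E')$. The Mackey-group maximality makes $\mu$ finer than the original lcs topology $\tau$, whence $E'\subseteq(E,\mu)'$; conversely, each $\eta\in(E,\mu)'$ produces a character $\psi(\eta)\in\widehat{(E,\mu)}=\widehat{E}=\psi(E')$, and injectivity of $\psi$ forces $\eta\in E'$, so $(E,\mu)'=E'$. The topology $\tau_M$ is locally convex, hence locally quasi-convex, and is compatible with the original topology in the group sense, so Mackey-group maximality of $\mu$ yields $\tau_M\leq\mu$. For the reverse inequality, I take an arbitrary quasi-convex $\mu$-neighborhood $V$ of zero and, using the vector topology structure, a balanced $\mu$-neighborhood $U\subseteq V$. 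For each $\eta\in\psi^{-1}(V^\triangleright)$ the set $\eta(U)\subseteq\RR$ is balanced and contained in $\big[-\tfrac14,\tfrac14\big]+\ZZ$; since the only balanced subsets of $\RR$ are intervals about $0$, this forces $\eta(U)\subseteq\big[-\tfrac14,\tfrac14\big]$, i.e.\ $\psi^{-1}(V^\triangleright)\subseteq\tfrac14 U^\circ$. By Bourbaki--Alaoglu the set $C:=\tfrac14 U^\circ$ is $\sigma(E',E)$-compact and absolutely convex, so $C\in\mathcal{CAC}(E')$, and applying Lemma \ref{l:polar-neigh} with $B:={}^\circ C$ (so $B^\circ=C$ by the bipolar theorem) yields
\[
V \;=\; V^{\triangleright\triangleleft} \;\supseteq\; \big(\psi(C)\big)^\triangleleft \;=\; \tfrac14\,{}^\circ C,
\]
and the right-hand side is a $\tau_M$-neighborhood of zero by the very definition of the Mackey lcs topology. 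Hence $V$ is a $\tau_M$-neighborhood, $\mu\leq\tau_M$, and so $\mu=\tau_M$; in particular $\mu$ is locally convex, and Theorem \ref{t:Mackey-space-group} applied to $(E,\mu)$ now supplies the swallowing condition.

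The main obstacle is this balanced-neighborhood reduction: a priori $\mu$ is only locally quasi-convex, and without exploiting a balanced $\mu$-neighborhood inside the quasi-convex $V$ one cannot sharpen the group-theoretic condition $\eta(U)\subseteq[-\tfrac14,\tfrac14]+\ZZ$ to the classical bound $|\eta(U)|\leq\tfrac14$ needed to produce an element of $\mathcal{CAC}(E')$ and invoke Mackey--Arens.
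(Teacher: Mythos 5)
Your proof is correct, and the sufficiency direction coincides with the paper's (take $\mu$ to be the Mackey space topology $\mu(E,E')$ and apply Theorem \ref{t:Mackey-space-group}). The necessity direction, however, follows a genuinely different and more self-contained route. The paper disposes of it in one line: since a Mackey group topology is by definition locally quasi-convex, Proposition 2.4 of \cite{Ban} (a locally quasi-convex vector topology is locally convex) shows that $(E,\mu)$ is an lcs, and Theorem \ref{t:Mackey-space-group} then yields both the swallowing condition and that $\mu$ is a Mackey space topology. You instead prove by hand that $\mu=\mu(E,E')$: the balanced-neighborhood argument (a balanced subset of $\RR$ inside $\big[-\tfrac14,\tfrac14\big]+\ZZ$ must lie in $\big[-\tfrac14,\tfrac14\big]$) upgrades the group-polar condition to the classical polar condition, and Alaoglu--Bourbaki together with Lemma \ref{l:polar-neigh} then squeezes a $\mu(E,E')$-neighborhood inside each quasi-convex $\mu$-neighborhood. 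This is in essence a re-proof, in this special situation, of the cited Banaszczyk result combined with the Mackey--Arens theorem; what it buys is an explicit identification of the Mackey group vector topology with $\mu(E,E')$ (which the paper obtains only implicitly), at the cost of length. Two cosmetic points: Proposition \ref{p:Mac-duality} covers only $\Mac(E)$, while the invariance of $\mathcal{CAC}(E')$ under change of compatible topology is immediate from its definition via $\sigma(E',E)$; and the identity $(E,\mu)'=E'$ that you derive through $\psi$ also follows directly once $\mu=\mu(E,E')$ is known, though your order of argument (establishing the dual first, so that Alaoglu lands in $\mathcal{CAC}(E')$) is the right one.
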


\begin{proof}
If $(E,\mu)$ is also a topological  vector space, then, by Proposition 2.4 of \cite{Ban}, $(E,\mu)$ is a locally convex space. By Theorem \ref{t:Mackey-space-group}, $\psi\big(\mathcal{CAC}(E')\big)$ swallows $\Mac(E)$. Conversely, let $\psi\big(\mathcal{CAC}(E')\big)$ swallow $\Mac(E)$ and let $\mu$ be a Mackey space topology on $E$. Then, by Theorem \ref{t:Mackey-space-group}, $\mu$ is a Mackey group topology. \qed
\end{proof}

By Remark \ref{rem:Mac-acx-compact} and Corollary \ref{c:vector-Mackey-group}, if a Mackey group topology exists for $E$ it is usually not a vector topology.
It would be excellent to find another characterization (than in Theorem \ref{t:Mackey-space-group}) of Mackey groups among (some classes of) Mackey spaces.

The following fundamental question is posed in \cite{Gabr-A(s)-Mackey}: {\em Is it true that every lcs $E$ admits a Mackey group topology}?
It would be interesting to consider a more concrete problem in which $\mathcal{E}$ is a class of Mackey locally convex spaces from the first, second or forth lines of the diagram.
\begin{problem} \label{prob:lcs-E-Mackey-group}
Is it true that every $E\in \mathcal{E}$ admits a Mackey group topology?
\end{problem}


\bibliographystyle{amsplain}

\end{document}